\title[A new approach for the unitary Dyson Brownian motion]{A new approach for the unitary Dyson Brownian motion through the theory of viscosity solutions}
\author{Charles Bertucci}
\address[CB]{CMAP, Ecole Polytechnique, UMR 7641, 91120 Palaiseau, France.}
\email{\url{mailto:charles.bertucci@polytechnique.edu}}
\author{Valentin Pesce}
\address[VP]{CMAP, Ecole Polytechnique, UMR 7641, 91120 Palaiseau, France.}
\email{\url{mailto:valentin.pesce@polytechnique.edu}}
\date{Spring 2025, compiled \today}
\newtheorem{theorem}{Theorem}[section]%
\newtheorem{definition}[theorem]{Definition}
\newtheorem{corollary}[theorem]{Corollary}%
\newtheorem{proposition}[theorem]{Proposition}%
\newtheorem{lemma}[theorem]{Lemma}%
\newtheorem{remark}[theorem]{Remark}%
\let\ensembleNombre\mathbb
\newcommand\tab[1][0.5cm]{\hspace*{#1}}
\newcommand\mesT{\mathcal P(\mathbb T)}
\newcommand\N{\ensembleNombre{N}}
\newcommand\Z{\ensembleNombre{Z}}
\newcommand\R{\ensembleNombre{R}}
\newcommand\C{\ensembleNombre{C}}
\newcommand\T{\ensembleNombre{T}}
\newcommand\dis{\displaystyle}
\newcommand\E{\mathbb{E}}
\newcommand\mP{\mathbb{P}}
\newcommand\equalaw{\underset{law}{=}}
\DeclareMathOperator{\leb}{Leb} 
\DeclareMathOperator{\cotan}{cotan} 
\def\@MRExtract#1 #2!{#1}     
\renewcommand{\MR}[1]{
  \xdef\@MRSTRIP{\@MRExtract#1 !}%
  \href{http://www.ams.org/mathscinet-getitem?mr=\@MRSTRIP}{MR-\@MRSTRIP}}
\numberwithin{equation}{section}
\keywords{Random matrix; Dyson Brownian motion;  Interacting particle system; Dyson equation on the circle; Viscosity.}
\begin{document}

\begin{abstract}
In this paper, we study the unitary Dyson Brownian motion through a partial differential equation approach recently introduced for the real Dyson case. 
The main difference with the real Dyson case is that the spectrum is now on the circle and not on the real line, which leads to particular attention to comparison principles. 
First we recall why the system of particles which are the eigenvalues of unitary Dyson Brownian motion is well posed thanks to a containment function. 
Then we proved that the primitive of the limit spectral measure of the unitary Dyson Brownian motion is the unique solution to a viscosity equation obtained by primitive the Dyson equation on the circle. 
Finally, we study some properties of solutions of Dyson's equation on the circle. We prove a $L^\infty$ regularization. We also look at the long time behaviour in law of a solution through a study of the so-called free entropy of the system. We conclude by discussing the uniform convergence towards the uniform measure on the circle of a solution of the Dyson equation.
\end{abstract}

\maketitle
 
{\footnotesize\tableofcontents}

\section{Introduction}
In this paper we study the so-called unitary Dyson Brownian motion. In \cite{dyson1962}, Dyson derived a system of stochastic differential equations satisfied by the angles of the eigenvalues of the unitary Dyson Brownian motion. We prove that this system of particles in interaction is well defined as stated in \cite{cepa2001brownian} by using the usual method to study systems of particles with a singular potential interaction as in \cite{bolley2018,cepa2001brownian}. 
\newline
\tab Then we look at the the limit equation, when the size of the matrix $N$ goes to $+\infty$, of the spectral measure of the unitary Dyson Brownian motion, which is called Dyson's equation on the circle. In the literature, the convergence of this spectral measure is obtained by showing that this sequence is relatively compact for a certain topology and then by showing uniqueness to Dyson's equation on the circle by looking at the moments or the Stieltjes transform of a solution to Dyson's equation in a certain sense that we do not define here \cite{cepa2001brownian}. As noticed in \cite{bertucci2022spectral}, for the real Dyson case we can have a more general approach by looking at the partial differential equation satisfied by the cumulative distribution function of a solution to Dyson's equation. Thanks to this approach we can deal with more general drifts for which the computation of the exact solution would not be possible a priori \cite{bertucci2022spectral}. We follow the same idea for Dyson's equation on the circle. We first integrate this equation to obtain what we shall call the primitive of the Dyson equation. We pay attention to the fact that in the case of measures on the circle there is no canonical cumulative distribution function associated to a measure on the circle as in the real case. The resulting equation is an integro-differential partial differential equation. This equation involves the half Laplacian on the circle which satisfies a maximum principle which leads us to define a notion of viscosity solution for the primitive equation as for the real case. We obtain a comparison principle and uniqueness of viscosity solution for the primitive equation by using classical techniques for viscosity equations (\cite{crandall1992} for second order fully non-linear partial differential equation and \cite{barles2008,arisawa2006,arisawa2008} for integro-differential partial differential equation for instance). 
\newline
\tab Later we go back to the system of particles associated to the unitary Dyson Brownian motion to prove a result of convergence of this systems of particles. As in \cite{bertucci2022spectral}, a main argument in the proof is a discrete comparison result of two system of particles on the circle. However there is no "good" order on the circle. This creates a difficulty to state a comparison principle. To overcome this difficulty, we consider a periodized system of particles associated to the initial system of particles. Thanks to this, we can state properly a discrete comparison principle. In the same spirit as in \cite{bertucci2022spectral}, we prove the convergence of the primitive of the spectral measure of the unitary Dyson Brownian motion to the unique viscosity solution to the primitive Dyson equation.
The proof mostly uses the discrete comparison principle and the maximum property of the half Laplacian. However, the main difference with the real case is that in the proof we have to use the local formulation of viscosity solutions. Indeed, locally an arc of the circle is like an interval of $\R$ and so there is a natural order on it which allows us to use the discrete comparison principle. 
\newline
\tab Finally, we study properties of the Dyson equation on the circle. We prove a $L^\infty$ regularization of the Dyson flow by proving an a priori estimate for a smooth solution to Dyson's equation on the circle and then using the vanishing method for viscosity solutions of the primitive of the Dyson equation as in \cite{bertucci2024spectral} for the real Dyson case. We give some other properties as the decay of the $L^p$ norms and the fact that the so-called free entropy associated to Dyson's flow is monotone and continuous along Dyson's flow. Thanks to these properties we give a new proof of the convergence in law of a solution to Dyson's equation towards the uniform measure on the circle as stated in \cite{cepa2001brownian}. We also study the convergence to the equilibrium of the solution to Dyson's equation. We shall prove that for all $p\in[1,+\infty)$ the solution converges towards the uniform measure on the circle in $L^p$ and if the solution is positive it also converges in $L^{\infty}$.

\section{The Dyson model: from random matrices to particles in interaction}
\label{se:matrix to part}

\subsection{The Dyson model}In 1962 \cite{dyson1962}, Dyson introduced continuous models that defined new types of Coulomb gases through the spectrum of random matrices. This approach gives another point of view for well known theorems in random matrix theory as the Wigner theorem \cite{anderson2010,meckes2019random,tao2012}. It makes a connection between random matrices and systems of particles in interaction and opened many links between random matrices and Coulomb gases, see for instance  \cite{bolley2018,chafai2015,chafai2021aspects,serfaty2024lectures}. 
Here we focus on the so-called unitary Dyson Brownian motion. 
Let us recall that for an integer $N\ge 1$, we say that a matrix $M$ is sampled from the Gaussian unitary ensemble of size $N$ ($GUE_N$) if $M$ is an hermitian matrix of size $N\times N$ such that $(M_{i,j})_{1\le i<j\le N}$ are independent complex normal law of mean 0 and variance 1 and $(M_{i,i})_{1\le i\le N}$ are independent (and independent of $M_{i,j}$ for $1\le i<j\le N$) real normal law of mean 0 and variance 1. The law of a matrix sampled from it is invariant by unitary change of basis. 
\newline
For all integer $N\ge 1$, let $\mathcal U_N(\C)$ be the set of complex unitary matrices. For $N\ge1$, we define the unitary Dyson Brownian motion as a stochastic process starting from a deterministic unitary matrix $D_N(0)\in \mathcal U_N(\C)$ and generated by: 
\begin{equation}
D_N(t+h)=D_N(t)\exp\left(i\cfrac{\sqrt h}{\sqrt{N}}M \right)\in\mathcal U_N(\C)
\label{Genmatrix}
\end{equation} where $M$ is sampled from the Gaussian unitary ensemble of size $N$ (re sampled at each step).
This definition gives us an algorithm to simulate the  unitary Dyson Brownian motion \cite{buijsman2024}.
\begin{remark}
This definition generalizes in dimension $N$ the fact that if $(B_t)_{t\ge0}$ is a real Brownian motion we have: $$\exp(iB_{t+h})\equalaw\exp(iB_t)\exp(i \sqrt{h}\mathcal N),$$
with $\mathcal N$ a normal law of parameter (0,1) independent from $B_t$ thanks to the fact that $(B_{t+h}-B_t)_{h\ge0}$ has the law of a Brownian motion independent of $B_t$.
\end{remark} 
Let $(\mu_t^j)_{1\le j\le N}=(\exp(i\theta_t^j))_{1\le j\le N}$ be the $N$ eigenvalues of $D_N(t)$ for $t\ge 0$. 
Using the perturbation theory (or Hadamard's variational formula (\cite{tao2012},section 1.3.4)), Dyson explained that the family $(\theta_.^j)_{1\le j\le N}$ are weak solutions of a system of stochastic differential equations (SDE). 
\newline
From now until the end, for $N\ge 1$, we say that a system of $N$ particles; $(\lambda_t^i)_{1\le i\le N,t\ge 0}$ satisfies the unitary Dyson Brownian system of size $N$ (sometimes we just call it $N$ particles Dyson case) if it satisfies the following system of SDE:
\begin{equation} 
\forall 1\le i\le N,\, d\lambda_t^i=\cfrac{1}{N}\dis\sum_{j\ne i}\cotan((\lambda_t^i-\lambda_t^j)/2)\,dt+\cfrac{2}{\sqrt{N}}\,dB_t^i,
\label{dysononcirc}
\end{equation}
with $(B_t^i)_{1\le i\le N}$ a family of $N$ independent Brownian motions (defined on the same probability space and a same filtration).
\newline
This system contains a diffusive part through the Brownian motion term in competition with a repulsive mean field interaction through the first sum (each particle is repulsed by all the other particles). Hence the existence of solutions in a weak or strong sense of the previous system is not clear a priori. 
\newline
Even though results for existence and uniqueness of a weak and strong solution for this system are already known in the literature \cite{anderson2010,bolley2018,cepa2001brownian,cepa2007}, complete proofs are missing and we shall give in the following sections a detailed proof of these points for the sake of completeness.

\subsection{Notation and well-posedness of the model}

\subsubsection{Notation} The open subset of $\R^N$ on which we work is:  $$D=\R^N-\cup_{i\ne j}\{(x_1,...,x_N)\in\R^N: x_i=x_j\text{ modulo } 2\pi\}$$ and its boundary in the compactification of $\R^N$:  $$\partial D=\{+\infty\}\cup_{i\ne j}\{(x_1,...,x_N)\in\R^N: x_i=x_j \text{ modulo } 2\pi\}.$$
Let us define $\R_>^N$ by: $\{(x_1,...,x_N)\in\R^N: x_1<x_2<...<x_N<x_1+2\pi\}$.
\newline
Furthermore, we shall write $x=(x_1,...,x_N)$ for an element of $\R^N$. 
\newline 
If $x\in D$, let us define its energy by $$\mathcal E(x):=\cfrac{1}{N}\dis\sum_{i=1}^NV(x_i)+\cfrac{1}{N^2}\dis\sum_{1\le i \ne j \le N} W(x_i-x_j)=:\mathcal E_V(x)+\mathcal E_W(x)$$ where $V$ is an external confinement potential which is in our case $V(t):=t^2$ as the kinetic energy that help us to check that our system does not explode spatially in finite time and for $x\in\R$, $x\ne 0$ modulo $2\pi$, $W(x):=-\ln((\sin(x/2)^2)\ge 0$ which is the potential energy associated to our system since $-W'(x)=\cotan(x/2)$.
\newline
Let $\alpha_N$ and $\beta_N$ be two non negative real numbers. 
We are interested in the following SDE system satisfied by a system of particles $(\lambda_.^i)_{1\le i\le N}$:
\begin{equation} 
\begin{split}
\forall 1\le i\le N,\, d\lambda_t^i=&-\cfrac{\alpha_N}{2}\cfrac{\partial \mathcal E_W}{\partial x_i}(\lambda_t^1,...,\lambda_t^N)+\sqrt{\cfrac{2\alpha_N}{\beta_N}}\,dB_t^i\\
=&-\cfrac{\alpha_N}{N^2}\dis\sum_{j\ne i}W'(\lambda_t^i-\lambda_t^j)\,dt+\sqrt{\cfrac{2\alpha_N}{\beta_N}}\,dB_t^i\\
=&\cfrac{\alpha_N}{N^2}\dis\sum_{j\ne i}\cotan((\lambda_t^i-\lambda_t^j)/2)\,dt+\sqrt{\cfrac{2\alpha_N}{\beta_N}}\,dB_t^i
\end{split}
\label{dysoncircleeq}
\end{equation}
with $(B_t^i)_{1\le i\le N}$ a family of independent Brownian motions.
\newline
The unitary Dyson Brownian system corresponds to the case:  $\alpha_N=N$ and $\beta_N=N^2/2$.

\subsubsection{Formulas}
\label{se:formulas}
The goal of this section is to compute how the generator of the system of particles \eqref{dysoncircleeq} acts on $\mathcal E$.
\newline
Firstly the generator associated to the system $(\lambda_t^i)_{1\le i\le N}$ solution to $\eqref{dysoncircleeq} $ is: $$Lf=\cfrac{\alpha_N}{\beta_N}\Delta f-\cfrac{\alpha_N}{2}\nabla \mathcal E_W.\nabla f,$$ for $f$ a smooth function from $D$ to $\R$. Hence for a smooth function $f$ from $D$ to $\R$, by Itô's formula: $$\E(f(\lambda_t^1,..,\lambda_t^N))=f(\lambda_0^i,...,\lambda_0^N)+\E\left(\dis\int_0^t Lf(\lambda_t^1,...,\lambda_t^N)dt\right).$$
\newline
Let $x\in D$ and $1\le i,j \le N$.
\newline
We have the following immediate formulas: 
\begin{align}
\cfrac{\partial \mathcal E_V}{\partial x_i}(x)&=\cfrac{2x_i}{N}\\
\cfrac{\partial \mathcal E_W}{\partial x_i}(x)&=-\cfrac{2}{N^2}\dis\sum_{j\ne i}\cotan((x_i-x_j)/2)\\
\nabla^2 \mathcal E_V(x)&=\cfrac{2}{N}I_N\\
\cfrac{\partial^2 \mathcal E_W}{\partial x_i^2}(x)&=\cfrac{1}{N^2}\sum_{j\ne i}\cfrac{1}{\sin((x_i-x_j)/2)^2}
\end{align}
Hence, we can now compute $L\mathcal E_V$ and $L\mathcal E_W$ to compute finally $L\mathcal E$.
\newline
Remark that $$L\mathcal E_V(x)=\cfrac{\alpha_N}{\beta_N}\times 2-\cfrac{\alpha_N}{2}\dis\sum_{1\le i\ne j\le N}\cfrac{-4x_i}{N^3}\cotan((x_i-x_j)/2)=\cfrac{2\alpha_N}{\beta_N}+\cfrac{2\alpha_N}{N^3}\dis\sum_{1\le i\ne j\le N}x_i\cotan((x_i-x_j)/2).$$
To express the last sum we can notice that: $$\dis\sum_{1\le i\ne j\le N}x_i\cotan((x_i-x_j)/2)=\dis\sum_{1\le i\ne j\le N}x_j\cotan((x_j-x_i)/2)$$
So we get: $$2\dis\sum_{1\le i\ne j\le N}x_i\cotan((x_i-x_j)/2)=\dis\sum_{1\le i\ne j\le N}(x_i-x_j)\cotan((x_i-x_j)/2).$$
Hence, we have the formula: $$L\mathcal E_V(x)=\cfrac{2\alpha_N}{\beta_N}+\cfrac{2\alpha_N}{N^3}\sum_{1\le i\ne j\le N}\psi(x_i-x_j),$$ where $\psi(x)=\cfrac{x}{2}\cotan(x/2)$. Let us remark that this function is such that $\psi(x)\le 1$ for all $x\in (-2\pi,2\pi)$.
\newline
Now we compute $L\mathcal E_W(x)$.
Start with: 
\begin{align}
L\mathcal E_W(x)&=\cfrac{\alpha_N}{\beta_N}\dis\sum_{i=1}^N\cfrac{1}{N^2}\dis\sum_{j\ne i}\cfrac{1}{(\sin((x_i-x_j)/2)^2}-\cfrac{\alpha_N}{2}|\nabla \mathcal E_W(x)|^2.
\end{align}
We can express $|\nabla \mathcal E_W(x)|^2$: 
\begin{align*}
|\nabla \mathcal E_W(x)|^2=&\cfrac{4}{N^4}\sum_{1\le i\le N}\sum_{k\ne i,\, j\ne i}\cotan((x_i-x_k)/2)\cotan((x_i-x_j)/2)\\
=&\cfrac{4}{N^4}\left[\sum_{j\ne i, k \ne i, j\ne k}\cotan((x_i-x_k)/2)\cotan((x_i-x_j)/2)+\sum_{i\ne j}(\cotan((x_i-x_j)/2))^2\right]\\
=:&\cfrac{4}{N^4}\left[S_N+\sum_{i\ne j}(\cotan((x_i-x_j)/2))^2\right]
\end{align*} 
by separating the second sum when $j=k$ or $j\ne k$.
Now to compute $S_N$ we need to use an algebraic identity. This computation is based on the fact that if $a+b+c=0$ then: 
\begin{equation}
\label{eqcotan}
\cotan(a)\cotan(b)+\cotan(b)\cotan(c)+\cotan(a)\cotan(c)=1
\end{equation}
Thanks to this remark, 
\begin{align*}
S_N=&\sum_{i\ne k, i\ne j, j\ne k}\cotan((x_i-x_k)/2)\cotan((x_i-x_j)/2)\\
=&-\sum_{i\ne k, i\ne j, j\ne k}\cotan((x_i-x_k)/2)\cotan((x_j-x_i)/2)\\
\underset{\eqref{eqcotan}}{=}&-\sum_{i\ne k, i\ne j, j\ne k}\left(1-\cotan((x_i-x_k)/2)\cotan((x_k-x_j)/2)-\cotan((x_j-x_i)/2)\cotan((x_k-x_j)/2)\right)\\
=&-N(N-1)(N-2)\\
&-\sum_{i\ne k, i\ne j, j\ne k}\left(\cotan((x_k-x_i)/2)\cotan((x_k-x_j)/2)-\cotan((x_j-x_i)/2)\cotan((x_j-x_k)/2)\right)\\
=&-N(N-1)(N-2)-2S_N.
\end{align*}
Hence we have: 
\begin{equation}
S_N=\dis\sum_{j\ne i,k\ne i, j\ne k}\cotan((x_i-x_k)/2)\cotan((x_i-x_j)/2)=-N(N-1)(N-2)/3.
\label{cotlardcircle}
\end{equation}
Moreover by using that $\cotan^2=-1+1/\sin^2$ we get: 
$$\sum_{i\ne j}(\cotan((x_i-x_j)/2))^2=\sum_{i\ne j}\frac{1}{(\sin((x_i-x_j)/2))^2}-N(N-1).$$
So we get: $$|\nabla \mathcal E_W(x)|^2=\cfrac{4}{N^4}\left(-\cfrac{N(N-1)(N-2)}{3}-N(N-1)\right)+\cfrac{4}{N^4}\sum_{i\ne j}\frac{1}{(\sin((x_i-x_j)/2))^2}.$$
This can be summarized into: $$L\mathcal E_W(x)=\left[\cfrac{\alpha_N}{N^2\beta_N}-\cfrac{2\alpha_N}{N^4}\right]\dis\sum_{i=1}^N\dis\sum_{j\ne i}\cfrac{1}{(\sin((x_i-x_j)/2))^2}+C_N, $$ where $C_N:=\cfrac{2\alpha_N}{N^4}\left(\cfrac{N(N-1)(N-2)}{3}+N(N-1)\right)$.
\newline
So we can now compute: $$L\mathcal E(x)=\cfrac{2\alpha_N}{\beta_N}+\cfrac{2\alpha_N}{N^3}\sum_{1\le i\ne j\le N}\psi(x_i-x_j)+\left[\cfrac{\alpha_N}{N^2\beta_N}-\cfrac{2\alpha_N}{N^4}\right]\dis\sum_{i=1}^N\dis\sum_{j\ne i}\cfrac{1}{(\sin((x_i-x_j)/2))^2}+C_N.$$
If $\beta_N\ge N^2/2$ then $\left[\cfrac{\alpha_N}{N^2\beta_N}-\cfrac{2\alpha_N}{N^4}\right]\le0$.
Using that $\psi(x)\le 1$ if $|x|<2\pi$ we deduce that for all $x\in \R^N_>$: 
\begin{equation}
\label{unfiorm bound}
\sup_{x\in\R^N_>} L\mathcal E(x)\le \cfrac{2\alpha_N}{\beta_N}+\cfrac{2\alpha_NN(N-1)}{N^3}+C_N. 
\end{equation}
Let us notice that the Dyson case ($\alpha_N=N$, $\beta_N=N^2/2)$ corresponds to the case $\left[\cfrac{\alpha_N}{N^2\beta_N}-\cfrac{2\alpha_N}{N^4}\right]=0$. 
\begin{remark}
We notice that a key point in the proof is the relation $\eqref{eqcotan}$ satisfied by $\cotan$. Actually $\coth$ satisfies the same relation with -1 instead of 1, and the inverse function satisfies it with 0 instead of 1 (which is used in the existence in real Dyson case where the interaction is the inverse function instead of $\cotan$ \cite{anderson2010,bolley2018}).
\end{remark}

\subsubsection{Condition for well-posedness of the model}
We introduce the following stopping times: for all $\varepsilon>0$, $$T_\varepsilon=\inf\{t\ge0: \underset{1\le i\le N}{\max} |\lambda_t^i|\ge\varepsilon^{-1} \, \text{or}\, \underset{1\le i,j\le N+1}{\min}|\lambda_t^i-\lambda_t^j|\le \varepsilon\}$$ and $$T_{\partial D}=\underset{\varepsilon>0}{\sup}T_\varepsilon,$$ which are the exit time of compact set of $D$.
\newline
One can show that for a good choice of parameters $\alpha_N$ and $\beta_N$ there is no explosion in finite time. 
\begin{theorem}[Criterion for absence of explosion]\label{th:explo}
For $N\ge 1$, if $\beta_N\ge N^2/2$, then 
for any initial data $\lambda_0\in \R^N_>$, there exists a unique strong solution to $~\eqref{dysoncircleeq}$ defined on $[0,+\infty)$ and $T_{\partial D}=+\infty$ almost surely. 
\end{theorem}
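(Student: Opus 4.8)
The plan is to run the classical localization-plus-Lyapunov-function argument for SDEs with locally Lipschitz coefficients, using the energy $\mathcal{E}$ as the Lyapunov function and the bound \eqref{unfiorm bound} as the crucial a priori estimate. First I would note that on the open set $D$ the drift $x\mapsto \frac{\alpha_N}{N^2}\sum_{j\ne i}\cotan((x_i-x_j)/2)$ is smooth (in particular locally Lipschitz), so by the standard Cauchy–Lipschitz theory for SDEs there exists a unique strong solution defined up to the explosion time $T_{\partial D}$, which by definition is the exit time from compact subsets of $D$ (equivalently, the time at which either two particles collide modulo $2\pi$ or some $|\lambda_t^i|$ blows up). It therefore remains to show $T_{\partial D}=+\infty$ almost surely.

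To do this I would apply Itô's formula to $\mathcal{E}(\lambda_t)$ on the interval $[0,T_\varepsilon]$, using the generator identity $\E\big(\mathcal E(\lambda_{t\wedge T_\varepsilon})\big)=\mathcal E(\lambda_0)+\E\big(\int_0^{t\wedge T_\varepsilon} L\mathcal E(\lambda_s)\,ds\big)$ from Section~\ref{se:formulas}; the stopping at $T_\varepsilon$ makes the local martingale part a genuine martingale since $\nabla\mathcal E$ is bounded on the relevant compact set. The hypothesis $\beta_N\ge N^2/2$ is exactly what forces the coefficient $\big[\frac{\alpha_N}{N^2\beta_N}-\frac{2\alpha_N}{N^4}\big]$ to be $\le 0$, so the (positive, singular) $1/\sin^2$ terms in $L\mathcal E$ drop out in the direction of the inequality, and combining this with $\psi\le 1$ on $(-2\pi,2\pi)$ gives the deterministic bound $L\mathcal E(x)\le K_N$ for all $x\in\R^N_>$, where $K_N:=\frac{2\alpha_N}{\beta_N}+\frac{2\alpha_N(N-1)}{N^2}+C_N$ is the constant on the right-hand side of \eqref{unfiorm bound}. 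Hence $\E\big(\mathcal E(\lambda_{t\wedge T_\varepsilon})\big)\le \mathcal E(\lambda_0)+K_N t$ for every $\varepsilon>0$ and every $t\ge 0$.

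Now I would conclude by contradiction. On the event $\{T_{\partial D}\le t\}$, as $\varepsilon\to 0$ the configuration $\lambda_{T_\varepsilon}$ approaches $\partial D$, i.e.\ either two coordinates coincide modulo $2\pi$ — in which case $W(x_i-x_j)=-\ln(\sin^2((x_i-x_j)/2))\to+\infty$ and hence $\mathcal E_W(\lambda_{T_\varepsilon})\to+\infty$ — or some coordinate escapes to $\infty$, in which case $\mathcal E_V(\lambda_{T_\varepsilon})=\frac1N\sum x_i^2\to+\infty$; in both cases $\mathcal E(\lambda_{T_\varepsilon})\to+\infty$ because $\mathcal E_V\ge 0$ and $\mathcal E_W\ge 0$ separately (using $V\ge 0$ and $W\ge 0$). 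Applying Fatou's lemma to $\liminf_{\varepsilon\to 0}\E\big(\mathcal E(\lambda_{t\wedge T_\varepsilon})\big)$ and bounding below by $\E\big(\mathbf 1_{\{T_{\partial D}\le t\}}\liminf_\varepsilon \mathcal E(\lambda_{T_\varepsilon})\big)$ forces $\mathbb P(T_{\partial D}\le t)=0$ for every $t$, hence $T_{\partial D}=+\infty$ a.s., and the solution extends to all of $[0,+\infty)$. Here one must be a little careful to order the particles into $\R^N_>$ (which is legitimate since the solution, being continuous and non-colliding before $T_{\partial D}$, stays in a fixed chamber up to relabelling and $2\pi$-translation) so that the estimate \eqref{unfiorm bound}, which is stated on $\R^N_>$, actually applies along the trajectory.

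The main obstacle is the bookkeeping around the boundary at infinity and the reduction to $\R^N_>$: one has to make precise that "exit from every compact subset of $D$" genuinely forces $\mathcal E\to+\infty$ (handling simultaneously the collision and the escape-to-infinity mechanisms, and the subtlety that in $\R^N$, unlike on the torus, coordinates can also diverge to $+\infty$ without colliding), and that the a priori bound proved on the Weyl chamber $\R^N_>$ transfers to the solution after relabelling. Everything else — local existence/uniqueness, the martingale property after stopping, Fatou — is routine.
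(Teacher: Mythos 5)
Your proposal is correct and follows the same strategy as the paper: use $\mathcal E$ as a Lyapunov/containment function, invoke Lemma~\ref{lemma: explo} (nonnegativity and blow-up at $\partial D$), bound $L\mathcal E$ uniformly on $\R^N_>$ via~\eqref{unfiorm bound}, apply Itô's formula on a stopped interval, and conclude non-explosion. The only cosmetic differences are that the paper builds the local solution explicitly by mollifying the potential to a smooth $2\pi$-periodic $W_\varepsilon$ and gluing the resulting processes $\lambda^\varepsilon$ (rather than citing general locally-Lipschitz SDE theory), and replaces your Fatou step by a Chebyshev/Markov estimate with level-set stopping times $T'_R=\inf\{t:\mathcal E(\lambda_t)>R\}$; both variants are standard and equivalent.
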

This result is known in the literature and the condition on $\beta_N$ is necessary to be sure that there is no explosion \cite{cepa2001brownian}. 
The proof uses the fact that $\mathcal E$ is a good function of containment for the system (i.e $\mathcal E(x)$ goes to $\infty$ when $x$ goes to $\partial D$ and $L\mathcal E$ is bounded uniformly on $\R^N_>$) and the arguments are classical for such system of particles as the real Dyson case or Coulomb gases \cite{anderson2010,bolley2018}.

\begin{proof}
First we state a lemma about the function $\mathcal E$.
\begin{lemma}
\label{lemma: explo}
We have the following properties for $\mathcal E$: 
\begin{enumerate}
\item{$\mathcal E\ge0$}
\item{$\underset{x\to\partial D}{\lim}\mathcal E(x)=+\infty$}
\end{enumerate}
\end{lemma}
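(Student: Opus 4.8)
The plan is to prove the two claimed properties of $\mathcal E$ directly from its definition, splitting into the confinement part $\mathcal E_V$ and the interaction part $\mathcal E_W$. For positivity, I would observe that $\mathcal E_V(x)=\frac1N\sum_i x_i^2\ge 0$ trivially, and that $\mathcal E_W(x)=\frac1{N^2}\sum_{i\ne j}W(x_i-x_j)$ with $W(x)=-\ln(\sin(x/2)^2)$, and that $\sin(x/2)^2\le 1$ for all real $x$ (strictly between $0$ and $1$ on $D$), so each term $W(x_i-x_j)=-\ln(\sin((x_i-x_j)/2)^2)\ge 0$. Summing gives $\mathcal E_W\ge 0$, hence $\mathcal E=\mathcal E_V+\mathcal E_W\ge 0$; this is exactly the remark already made in the text that "$W(x)\ge 0$".

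For the blow-up at the boundary $\partial D$, recall that $\partial D=\{+\infty\}\cup\bigcup_{i\ne j}\{x_i=x_j \bmod 2\pi\}$, so a sequence $x^{(n)}\to\partial D$ either has $\max_i|x^{(n)}_i|\to+\infty$, or has $(x^{(n)}_i-x^{(n)}_j)\to 0 \bmod 2\pi$ for some pair $i\ne j$ (or both). In the first case, $\mathcal E_V(x^{(n)})=\frac1N\sum_i (x^{(n)}_i)^2\to+\infty$ while $\mathcal E_W\ge 0$ stays nonnegative, so $\mathcal E(x^{(n)})\to+\infty$. In the second case, for the offending pair the quantity $\sin((x^{(n)}_i-x^{(n)}_j)/2)^2\to 0$, hence $W(x^{(n)}_i-x^{(n)}_j)=-\ln(\sin((x^{(n)}_i-x^{(n)}_j)/2)^2)\to+\infty$; since every other term of $\mathcal E_W$ is $\ge 0$ and $\mathcal E_V\ge 0$, again $\mathcal E(x^{(n)})\to+\infty$. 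Combining the two cases (noting one can always pass to a subsequence along which exactly one of the two behaviours dominates, or simply bound $\mathcal E$ below by the divergent summand plus nonnegative terms) yields $\lim_{x\to\partial D}\mathcal E(x)=+\infty$.

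The only mild subtlety — and the step I would be most careful about — is making the notion "$x\to\partial D$" precise in the compactification of $\R^N$: one must check that along \emph{any} such sequence at least one of the two divergence mechanisms is triggered, and that the finitely many remaining terms (each bounded below by $0$, and the $\mathcal E_V$ term bounded below by $0$) cannot compensate. Since $\mathcal E_W$ is a finite sum of nonnegative terms and $\mathcal E_V$ is nonnegative, the lower bound $\mathcal E(x)\ge \frac1{N^2}W(x_i-x_j)$ for any fixed pair, together with $\mathcal E(x)\ge \mathcal E_V(x)$, is enough: pick the pair $(i,j)$ realizing the minimal gap (or use the $\mathcal E_V$ bound if the gaps stay bounded away from $0$), and let $n\to\infty$. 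This reduces everything to the elementary limits $t^2\to+\infty$ and $-\ln(\sin(s)^2)\to+\infty$ as $s\to 0$.

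With the lemma in hand, the proof of Theorem~\ref{th:explo} then proceeds by the classical argument: under $\beta_N\ge N^2/2$ the estimate \eqref{unfiorm bound} gives $L\mathcal E\le K_N$ for a finite constant $K_N$ on $\R^N_>$, so $t\mapsto \mathcal E(\lambda_{t\wedge T_\varepsilon})-K_N(t\wedge T_\varepsilon)$ is a supermartingale; taking expectations and letting $\varepsilon\to 0$ forces $\mathbb E[\mathcal E(\lambda_{t\wedge T_{\partial D}})]$ to stay finite, which by property (2) of the lemma prevents $T_{\partial D}$ from being finite with positive probability, giving $T_{\partial D}=+\infty$ a.s.; local existence and uniqueness of the strong solution on $D$ follow from the local Lipschitz character of the drift away from $\partial D$.
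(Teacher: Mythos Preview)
Your proof is correct and follows essentially the same approach as the paper: nonnegativity from $V\ge 0$ and $W\ge 0$, and blow-up at $\partial D$ by splitting into the two cases $\|x\|\to\infty$ (where $\mathcal E_V\to+\infty$) and collision modulo $2\pi$ (where a term of $\mathcal E_W\to+\infty$), using nonnegativity of the remaining part in each case. Your added care about the sequential formulation and the explicit lower bounds $\mathcal E\ge \mathcal E_V$ and $\mathcal E\ge \frac{1}{N^2}W(x_i-x_j)$ is a welcome clarification; the final paragraph sketching Theorem~\ref{th:explo} is extraneous to the lemma but accurate.
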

\begin{proof}
$\mathcal E_V$ and $\mathcal E_W$ are non negative since $V$ and $W$ are. Hence $\mathcal E$ is non negative on $D$. Moreover we see that if $x$ converges to $\partial D$ there is two cases. First if $||x||$ converges to $\infty$ then $\mathcal E_V(x)$ converges to $+\infty $ and as $\mathcal E_W$ is non negative, $\mathcal E(x)$ converges to $+\infty$. Secondly if $x$ converges to a $y$ such that there exists $i\ne j$ such that $y_i=y_j$ modulo $2\pi$ we clearly get that $\mathcal E_W(x)$ converges to $+\infty$ since $-\ln((\sin(z/2))^2)$ converges to $+\infty$ if $z$ converges to a real of $2\pi \Z$ and as $\mathcal E_V$ is non negative we also have that $H(x)$ converges to $+\infty$. 
\end{proof}
We now prove the theorem.
This proof is quite standard and uses a smooth approximation of the potential and uniqueness and existence for this smooth approximation.
\newline
In the following, if $(\lambda_.^i)_{1\le i\le N}$ is defined by $\eqref{dysoncircleeq}$ we set for all $t$ until this system exists, $\lambda_t^{N+1}=\lambda_t^1+2\pi$.
\newline
Let $\lambda_0\in \R^N_>$ an initial data and $\varepsilon>0 $ smaller than $\min_{1\le i\ne j\le N+1}(|\lambda_0^i-\lambda_0^j|)$ and such that $\varepsilon^{-1}>\max_{i}\lambda_0^i$. 
\newline
We consider a smooth $2\pi$ periodic approximation $W_\varepsilon$ of $W$ which coincides with $W$ on $(\varepsilon, 2\pi-\varepsilon)$ and we define as before $\mathcal E_\varepsilon=\mathcal E_V+\mathcal E_{W_\varepsilon}$. 
By existence and uniqueness of the solution to the SDE we can consider $(\lambda_t^\varepsilon)_{t\ge0}$ starting from $\lambda_0$ solution to the following problem: $$d\lambda_t^\varepsilon=2\sqrt{\cfrac{\alpha_N}{\beta_N}}dB_t-\cfrac{\alpha_N}{2}\nabla \mathcal E_{W_\varepsilon}(\lambda_t^\varepsilon)dt.$$
For $\varepsilon'<\varepsilon$, let $$T_{\varepsilon, \varepsilon'}=\inf\{s>0: \min_{1\le i\ne j\le N+1}|(\lambda_s^{\varepsilon'})^i-(\lambda_s^{\varepsilon'})^j|\le\varepsilon\text{ or }\max_{i}|(\lambda_s^{\varepsilon'})^i|\ge\varepsilon^{-1}\}$$ be a stopping time such that by definition we have that the processes $\lambda^\varepsilon$ and $\lambda^{\varepsilon'}$ coincide up to this time.
Hence, this allows us to define for all $\varepsilon$ as in the beginning of the proof, a stopping time $T_\varepsilon:=T_{\varepsilon,\varepsilon'}$ for $\varepsilon'<\varepsilon$ and $(\lambda_t)_{0\le t\le T_\varepsilon}:=(\lambda_t^{\varepsilon'})_{0\le t\le T_\varepsilon}$ for $\varepsilon'<\varepsilon$. We clearly have that if $\varepsilon'<\varepsilon$, $T_{\varepsilon'}\ge T_{\varepsilon}$ almost surely, and so that we can uniquely defined $\lambda$ until the stopping time $T_{\partial D}$. Moreover by definition of $W_\varepsilon$, we have that $\lambda$ satisfies the SDE with $W$ until the time $T_{\partial D}$ since it satisfies it for all $\varepsilon$ on $[0,T_\varepsilon]$. So it remains to prove that $T_{\partial D}=+\infty$ almost surely to conclude. 
\newline
Let us define for all $R>0$ the stopping times: $$T'_R=\inf\{t\ge0\,:\, \mathcal E(\lambda_t)>R\}$$ and $$T'=\lim_{R\to\infty} T'_R.$$
By the previous lemma: $T'=+\infty\subset T_{\partial D}=+\infty$ because if $\lambda_t$ converges to $\partial D$ in a finite time, $\mathcal E(\lambda_t)$ would explode in finite time by Lemma \ref{lemma: explo}. 
\newline
Let $R>0$, thanks to Lemma \ref{lemma: explo} we can find $\varepsilon$ (that depends on $R$) such that $T_\varepsilon\ge T'_R$.
\newline
Hence, we can use Itô's formula to have for all $t>0$: 
$$\E(\mathcal E_\varepsilon(\lambda^\varepsilon_{t\wedge T'_R}))-\mathcal E_\varepsilon(\lambda_0^\varepsilon)=\E\left(\int_0^{t\wedge T'_R}L\mathcal E_\varepsilon(\lambda_s^\varepsilon)ds\right).$$
Since $T_\varepsilon \ge T'_R$ it gives:
\begin{equation}
\label{Ito formula}
\E(\mathcal E(\lambda_{t\wedge T'_R}))-\mathcal E(\lambda_0)=\E\left(\int_0^{t\wedge T'_R}L\mathcal E(\lambda_s)ds\right).
\end{equation}
Now, by $\eqref{unfiorm bound}$ obtained in the Section $\ref{se:formulas}$, $L\mathcal E$ is bounded on $\R^N_>$ by a constant (which depends on $N$) that we will denote $K_N$. 
\newline 
So we have the following upper bound for every $t>0$, $$\E\left(\int_0^{t\wedge T'_R}L\mathcal E(\lambda_s)ds\right)\le K_Nt.$$ 
As a consequence of the Itô formula $\eqref{Ito formula}$, for every $t>0$, $$\underset{R>0}{\sup}\,\E(\mathcal E(\lambda_{t\wedge T'_R}))<+\infty.$$
Thanks to that we can use a kind of Markov estimate. For every $t>0$ since $\mathcal E$ is non negative we have: $$R \mathds{1}_{T'_R\le t}\le \mathcal E(\lambda_{t\wedge T'_R}).$$
By taking the expectation for every $t>0$ we get: $$\mP(T'_R\le t)\le\cfrac{\underset{R>0}{\sup}\,\E(\mathcal E(\lambda_{t\wedge T'_R}))}{R}.$$
Hence, let $R\to+\infty$ to have that for all $t>0$, $$\mP(T'\le t)=0.$$ 
So $T'=+\infty$ almost surely which concludes the proof. 
\end{proof}

We can reformulate Theorem $\ref{th:explo}$.
\begin{theorem}[Non explosion of the Dyson model]\label{th:explodyson}
For any initial data $\lambda_0\in \R^N_>$, let $(\lambda_t^i)_{1\le i\le N,\, t\ge 0}$ be the solution to $~\eqref{dysononcirc}$ which starts from $\lambda_0\in \R^N_>$. 
\newline
For all $1\le k\le N$, for all $t\ge 0$ we define $Z_t^k=\exp(i\lambda_t^k)$ and $T=\inf\{t\ge 0:  Z_t^i=Z_t^j \text{ for } 1\le i\ne j\le N\}$. Then $T=+\infty$ almost surely. 
\end{theorem}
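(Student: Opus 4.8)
The plan is to obtain Theorem~\ref{th:explodyson} as a direct reformulation of Theorem~\ref{th:explo}. First I would observe that \eqref{dysononcirc} is exactly the system \eqref{dysoncircleeq} in the particular case $\alpha_N=N$, $\beta_N=N^2/2$: with this choice $\alpha_N/N^2=1/N$ and $\sqrt{2\alpha_N/\beta_N}=2/\sqrt N$, and moreover $\beta_N=N^2/2\ge N^2/2$, so the hypothesis of Theorem~\ref{th:explo} is satisfied. Consequently, for $\lambda_0\in\R^N_>$ there is a unique strong solution $(\lambda_t^i)_{1\le i\le N,\,t\ge 0}$ of \eqref{dysononcirc}, it is defined on all of $[0,+\infty)$, and $T_{\partial D}=+\infty$ almost surely; this is the process appearing in the statement, and the convention $\lambda_t^{N+1}:=\lambda_t^1+2\pi$ is then defined for all $t\ge 0$.

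Next I would translate the collision of the $Z$'s into a statement about the angles. For $1\le i\ne j\le N$ one has $Z_t^i=Z_t^j$ if and only if $\lambda_t^i\equiv\lambda_t^j\pmod{2\pi}$. On the almost sure event $\{T_{\partial D}=+\infty\}$, the path $(\lambda_t)_{t\ge 0}$ is continuous, starts in $\R^N_>$, and satisfies $\min_{1\le i\ne j\le N+1}|\lambda_t^i-\lambda_t^j|>0$ for every $t\ge0$ (this follows from $T_{\partial D}=\sup_{\varepsilon>0}T_\varepsilon=+\infty$, since for $t<T_{\partial D}$ there is $\varepsilon$ with $t<T_\varepsilon$). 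Since leaving the cell $\R^N_>$ continuously would force one of the $N$ cyclic gaps $\lambda_t^{i+1}-\lambda_t^i$, $1\le i\le N$ (indices in $\{1,\dots,N+1\}$), to vanish — and these gaps are exactly $|\lambda_t^{i+1}-\lambda_t^i|$, which stay bounded below by the above — the path remains in $\R^N_>$ for all $t\ge 0$. In that cell one has $\lambda_t^1<\cdots<\lambda_t^N<\lambda_t^1+2\pi$, so no two coordinates are congruent modulo $2\pi$, hence $Z_t^i\ne Z_t^j$ for all $i\ne j$ and all $t\ge0$. Therefore $T=+\infty$ almost surely.

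Since the statement is essentially a change of variables, there is no serious obstacle; the only mild subtlety is the bookkeeping around the auxiliary $(N+1)$-st particle, which is precisely what makes the ``no collision modulo $2\pi$'' statement include the wrap-around between $\lambda^N$ and $\lambda^1$, and what makes positivity of all cyclic gaps equivalent to the path staying inside $\R^N_>$. One should also keep in mind that $T_{\partial D}=+\infty$ simultaneously rules out the escape-to-infinity mechanism contained in the definition of $\partial D$, although that mechanism is irrelevant for the $Z_t^i$, which live on the circle in any case.
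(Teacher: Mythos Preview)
Your proposal is correct and matches the paper's approach: the paper simply presents Theorem~\ref{th:explodyson} as a reformulation of Theorem~\ref{th:explo} without a separate proof, and you have spelled out precisely why (identification of parameters $\alpha_N=N$, $\beta_N=N^2/2$, then translating $Z_t^i=Z_t^j$ into $\lambda_t^i\equiv\lambda_t^j\pmod{2\pi}$ and using $T_{\partial D}=+\infty$ to keep the path in $\R^N_>$). The bookkeeping around the $(N+1)$-st particle and the cyclic gaps is exactly the right point to make explicit.
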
 

\begin{remark}
Let us briefly explain how to give a sense to a solution to \eqref{dysononcirc} when $\lambda_0$ is in $\R^N$ and not necessarily in $\R^N_>$. The following argument is used in Theorem 4.3.2 in \cite{anderson2010}. 
It is known that for all $t>0$, almost surely the spectrum of $D_N(t)$ is simple. This is a consequence of the fact that the spectrum of a matrix is simple if and only if the discriminant of the characteristic polynomial of this matrix is equal to 0 and the coefficients of the characteristic polynomial of a matrix are polynomials in the entries of this matrix. 
Let us recall that for all $t\ge 0$, we denoted $(\exp(i\lambda_t^j))_{1\le j\le N}$ the spectrum of $D_N(t)$. For all $t$ we ordered the eigenvalues such that $(\lambda_t^j)_{1\le j\le N}\in \overline{\R^N_>}$.
\newline
Let $(t_n)_{n\ge 0}\in (\R^+)^\N$ be a sequence that goes to 0 when $n$ goes to $\infty$. Almost surely for all $n$, $(\lambda_{t_n}^i)_{1\le i\le N}\in\R^N_>$. By Theorem \ref{th:explodyson}, for all $n$ we can consider the unique strong solution to \eqref{dysononcirc} starting from $(\lambda_{t_n}^i)_{1\le i\le N}$, $(\tilde\lambda_t)_{t\ge t_n}$ whose law coincides with the law of the eigenvalues of $(D_N(t))_{t\ge t_n}$ by the Dyson theorem \cite{dyson1962}. By uniqueness of the trajectory obtained in Theorem \ref{th:explodyson}, we can construct a solution $(\tilde\lambda_t)_{t>0}$ of the Dyson equation \eqref{dysononcirc} whose law coincides with the law of the eigenvalues of $(D_N(t))_{t>0}$ and which is in $\R^N_>$ for all $t>0$. Moreover the Hoffman-Wielandt inequality for normal matrices implies that $\lambda_t$ converges to $\lambda_0$ when $t$ goes to 0 by continuity in 0 of $(D_N(t))_{t\ge 0}$. This proves existence and uniqueness in law of solutions of \eqref{dysononcirc} starting from $\lambda_0$.  
\end{remark}

\section{Study of the Dyson partial differential equation on the circle}
Formally, thanks to Itô's formula, if $(\lambda_t^i)_{1\le i\le N}$ is a solution to ($\ref{dysononcirc}$), every limit point $m\in C([0,+\infty),\mesT)$ for the weak topology of the empirical measure of the system of particles: $$\mu_N(.)=\cfrac{1}{N}\sum_{i=1}^N\delta_{\lambda_.^i}\in C([0,+\infty),\mesT)$$ is solution to the following equation: 
\begin{equation}
\label{dyson}
 \partial_t m+\partial_\theta(mH[m])=0\,\,  \text{in } (0,\infty)\times \T, 
\end{equation}
where $\T=\R/2\pi\Z$, $\mesT$ is the set of probability measures on $\T$ and $H[m]$ is the Hilbert transform on the torus of the measure $m\in\mesT$. We shall detail more precisely in the next part what this equation means.
We call $\eqref{dyson}$ the Dyson equation on the circle.
\newline
The goal of this section is to study this equation and the primitive equation associated to it as in \cite{bertucci2022spectral}.
Then, in the next section we shall detail in which sense the system of particles converges to the solution to the Dyson equation on the circle. 

\subsection{Notation} 
A function $f:\R\to\R$ is an element of $C^k(\T)$ for $k\in[0,\infty]$ (resp. $L^p(\T)$ for $p\in[1,\infty]$) if $f$ is a $2\pi$ periodic function of class $C^k$ (resp. if $f$ is a $2\pi$ periodic function in $L^p([0,2\pi])$). We write $\mathcal F(\T)$ for the space of functions on the circle that means the functions from $\R$ to $\R$ that are $2\pi$ periodic.
\newline
Let $\mathcal F_{2\pi}$ (resp. $\mathcal F_{t,2\pi}$) be the space of functions $F$ defined on $\R$ (resp. $\R^+\times \R$) for which there exists $a\in\R$ such that for all $x\in\R$: $F(x+2\pi)=F(x)+a$ (resp. for all $t\ge 0$, for all $x\in\R$, we have $F(t,x+2\pi)=F(t,x)+a$). $\mathcal F_{2\pi}$ (resp. $\mathcal F_{t,2\pi}$) is a $\R$ vector space. 
\newline
We say that $F\in \mathcal F_{2\pi}$ (resp. $\mathcal F_{t,2\pi}$) is bounded if there exists exists $C>0$ such that $F$ is bounded by $C$ on $[0,2\pi]$ (resp. for all $t\ge 0$, $F(t,.)$ is bounded by $C$ on $[0,2\pi]$). 
\newline
Let $C_x^{1,1}(\R^+\times\R)$ be the space of functions $f(t,x)$ defined from $\R^+\times \R$ with values in $\R$ such that $f$ is a $C^1$ function and $\partial_x f$ is Lipschitz. 
\newline
Let $C^{1,1}_{2\pi}$ be the space of functions $F\in C^1(\R,\R)$ such that $F'$ is a Lipschitz function and for which there exists $a\in\R$ such that for all $x\in\R$, we have $F(x+2\pi)=F(x)+a$. $C^{1,1}_{2\pi}$ is a $\R$ vector space. 
\newline
Let $C^{1,1}_{t,2\pi}$ be the space of functions of $F\in C_x^{1,1}(\R^+\times\R)$ , for which there exists $a\in\R$ such that for all $t\ge 0$, for all $x\in\R$, we have $F(t,x+2\pi)=F(t,x)+a$. $C^{1,1}_{t,2\pi}$ is a $\R$ vector space.
\newline
\newline
\tab A priori, the Dyson equation \eqref{dyson} has to be understood in the sense of distributions. 
Let us recall that for an element $\mu\in\mesT$, we define $H[\mu]$ as the following distribution: 
$$H[\mu]=P.V.(\cotan(\theta/2))\ast \mu, $$ where $P.V.$ denotes the principal value.
More precisely we have that for all $\phi\in C^\infty(\T)$, $$\langle H[\mu],\phi\rangle=\int_\T\int_\T\phi(\theta')\cotan((\theta'-\theta)/2)\mu(d\theta)d\theta',$$ where integrals on $\T$ are understood in the sense of the principal value. 
When the measure $\mu$ has a smooth density with respect to Lebesgue measure we can explicitly write that: $$H[\mu](\theta)=\int_\T\cotan((\theta-\theta')/2)\mu(d\theta'):=\lim_{\varepsilon\to 0}\int_{\T\cap|\theta'-\theta|>\varepsilon}\cotan((\theta-\theta')/2)\mu(d\theta').$$
Note that the term $mH[m]$ in the Dyson equation does not make sense as the product of two distributions but rather as the distribution defined by: $$\langle m H[m],\phi\rangle=\int_\T\int_\T(\phi(\theta')-\phi(\theta))\cotan((\theta'-\theta)/2)m(d\theta)m(d\theta'), $$ for all $\phi\in C^\infty(\T)$.
For instance with this definition we have that $\delta_0H[\delta_0]=2\delta'_0$.
\newline
Hence, $\mu$ is a solution to $~\eqref{dyson}$ if for all $t\ge 0$, $m(t,.)\in\mesT$ and for all $\phi\in C^\infty(\T)$, it holds that for $t\ge0$: $$\langle m(t,.),\phi\rangle=\langle m(0,.),\phi\rangle+\int_0^t\langle m(s,.)H[m(s,.)],\phi'\rangle ds, $$ where $\langle \nu,f\rangle=\int_\T fd\nu$ for $\nu\in\mesT$ and $f\in L^1(\mu)$.  
We can obviously restrict ourselves to $\phi\in C^2(\T)$ in the previous formulation.
\newline
Let us recall an usual way to define the principal value as a real integral. By antisymmetric property of $\cotan(\theta)$ we have that $\int_\T \cotan(\theta/2)d\theta=0$. Hence for a suitable $f$ we have: 
\begin{equation}
H[f](\theta)=\lim_{\varepsilon\to 0}\int_{\T\cap|\theta'-\theta|>\varepsilon}f(\theta')\cotan((\theta-\theta')/2)d\theta'=\cfrac{1}{2}\int_\T\cotan(\theta'/2)(f(\theta-\theta')-f(\theta+\theta'))d\theta',
\label{hilb}
\end{equation} where the last integral is well defined if for instance $f\in C^2(\T)$.

\subsection{Primitive equation}
Following \cite{bertucci2022spectral}, we want to look for a primitive equation of \eqref{dyson}. What we mean by primitive equation, is an equation such that if $F$ is smooth and a solution to this primitive equation then $\partial_\theta F(t,\theta)$ shall be a solution to $\eqref{dyson}$.
\newline
However, there is no canonical way to consider a primitive of $\mu\in \mesT$, as the cumulative distribution is for a real measure, since there is no "good" order on $\T$. 
For $\mu\in \mesT$, let us define $F(\theta)=\mu([0,\theta])$ if $\theta\ge 0$ and $F(\theta)=-\mu((\theta,0) )$ if $\theta<0$. $F$ is non decreasing, right continuous and satisfies that $F(x+2\pi)=F(x)+1$ since $\mu$ is a probability measure on $\T$. 
Formally the primitive equation that we want to consider by integrating $~\eqref{dyson}$ is: 
\begin{equation}
\partial_t F+(\partial_\theta F) H[\partial_\theta F]=0
\label{primi1}
\end{equation}
We define the operator $A_0(F)=H(F')$ for $F\in C^{1,1}_{2\pi}$. By integration by part one can show that for a smooth $F$: $$A_0[F](\theta)=\int_{\T}\cfrac{2F(\theta)-F(\theta-\theta')-F(\theta+\theta')}{\sin^2(\theta'/2)}\cfrac{d\theta'}{4}=\int_{-\pi}^{\pi}\cfrac{F(\theta)-F(\theta-\theta')}{\sin^2(\theta'/2)}\cfrac{d\theta'}{2}.$$
\begin{remark}
The second expression above for $A_0$ is less practical because we cannot replace the integral on $[-\pi,\pi]$ by an integral on $\T$ since the integrated term is not $2\pi$ periodic. Moreover at first sight this integral seems to be ill-defined and it shall be understood as: $$A_0[F](\theta)=\int_{-\pi}^{\pi}\cfrac{F(\theta)-F(\theta-\theta')-\theta'F'(\theta)}{\sin^2(\theta'/2)}\cfrac{d\theta'}{2}.$$
\end{remark}
Hence, the primitive equation of $\eqref{dysononcirc}$ we shall consider is:
\begin{equation}
\partial_t F(t,\theta)+(\partial_\theta F(t,\theta)) A_0[F(t,.)](\theta)=0\,\, \text{in } (0,+\infty)\times \R,
\label{Primitive}
\end{equation}
where $A_0[F(t,.)]$ is:$$A_0[F(t,.)](\theta)=\int_{\T}\cfrac{2F(t,\theta)-F(t,\theta-\theta')-F(t,\theta+\theta')}{\sin^2(\theta'/2)}\cfrac{d\theta'}{4}=\int_{-\pi}^{\pi}\cfrac{F(t,\theta)-F(t,\theta-\theta')}{\sin^2(\theta'/2)}\cfrac{d\theta'}{2}.$$
The equation $\eqref{Primitive}$ shall be called the primitive Dyson equation.
\newline
The important property of the operator $A_0$ is that it satisfies a maximum principle: if $F\in C^{1,1}_{2\pi}$ has a maximum in $\theta_0$, then $A_0[F](\theta_0)\ge 0$ (by linearity of $A_0$ we also have that if $\phi\in C^{1,1}_{2\pi}$ and $F\in C^{1,1}_{2\pi}$ are such that $F-\phi$ has a maximum in $\theta _0$ then $A_0[F](\theta_0)\ge A_0[\phi](\theta_0)$) which motivates the approach by viscosity solution in \cite{bertucci2022spectral,bertucci2024spectral}. 
\begin{remark}
The operator $A_0$ appears in many problems as quasi-geostrophic equations, dislocation, porous media (see e.g. \cite{cordoba2004maximum,kiselev2022,do2018,de2025clogged,biler2008nonlinear}) and is linked with the fractional Laplacian \cite{daoud2022fractional}. Indeed, $F\in \mathcal C^2(\T)$ we can prove $(|c_n(A_0[F])|)_{n\in\Z}$ is proportional to $(|n||c_n(F)|)_{n\in\Z}$, where $c_n(F)$ is the $n^{th}$ Fourier coefficient of $F$. Thanks to this property the operator $A_0$ is defined as the square root of the Laplacian: $$A_0=\left(-\cfrac{d^2}{d\theta^2}\right)^{1/2}.$$ 
\end{remark}
Since we look at non decreasing in space variable solutions of $\eqref{Primitive}$ we can formally change the equation $\eqref{Primitive}$ in:  
\begin{equation}
\partial_t F(t,\theta)+(\partial_\theta F(t,\theta))_+ A_0[F(t,.)](\theta)=0\,\, \text{in } (0,+\infty)\times \R,
\label{eqPrimitive}
\end{equation}
This equation shall be treated with the theory of viscosity solutions \cite{crandall1992}.
\begin{definition}
\label{def viscosity sol}
$\bullet$ An upper semi continuous (usc) function $F\in \mathcal F_{t,2\pi}$ is said to be a viscosity subsolution
of $~\eqref{eqPrimitive}$ if for any function $\phi\in C^{1,1}_{t,2\pi}$, $(t_0,\theta_0)\in (0,\infty)\times \R$ point of local maximum of $F-\phi$ the following holds: 
\begin{equation}
\partial_t \phi(t_0,\theta_0)+(\partial_\theta \phi(t_0,\theta_0))_+ A_0[\phi(t_0,.)](\theta_0)\le 0
\label{subsol}
\end{equation}
\newline
$\bullet$ A lower semi continuous (lsc) function $F\in \mathcal F_{t,2\pi}$ is said to be a viscosity supersolution
of $~\eqref{eqPrimitive}$ if for any function $\phi\in C^{1,1}_{t,2\pi}$, $(t_0,\theta_0)\in (0,\infty)\times \R$ point of local minimum of $F-\phi$ the following holds: 
\begin{equation}
\partial_t \phi(t_0,\theta_0)+(\partial_\theta \phi(t_0,\theta_0))_+ A_0[\phi(t_0,.)](\theta_0)\ge 0
\label{supersol}
\end{equation}
\newline
$\bullet$ A viscosity solution $F$ of $~\eqref{eqPrimitive}$ is an usc subsolution such that $F_*$ is a supersolution where $F_*(t,\theta)=\lim\inf_{0\le s\to t, y\to \theta}F(s,y)$.
\newline
$\bullet$ A function $F$ is a viscosity solution to $\eqref{eqPrimitive}$ with initial data $F_0\in \mathcal F_{2\pi}$ if it is a viscosity solution of $~\eqref{eqPrimitive}$ and $F^*(0,.)\le F_0^*$ and ${(F_0)}_*\le F_*(0,.)$.
\newline
$\bullet$ A function $F$ such that for all $t\ge 0$, $F(t,.)$ is non decreasing, is a viscosity solution of $\eqref{Primitive}$ if it is a viscosity solution of $~\eqref{eqPrimitive}$.
\end{definition}
\begin{remark}
Since we are interested in non decreasing solutions $F$ in space variable, one can replace in the definition of sub and supersolution the positive part of $\partial_\theta \phi(t_0,\theta_0)$ by $\partial_\theta \phi(t_0,\theta_0)$ itself. Indeed, for any $t_0,h\ge 0$, if $\theta_0$ is a point of maximum of $F(t_0,.)-\phi(t_0,.)$ then: $$F(t_0,\theta_0)-\phi(t_0,\theta_0+h)\le F(t_0,\theta_0+h)-\phi(t_0,\theta_0+h)\le F(t_0,\theta_0)-\phi(t_0,\theta_0)$$ where we first used that $F(t_0,.)$ is non decreasing and then the fact that $\theta_0$ is a point of maximum of $F(t_0,.)-\phi(t_0,.)$. Hence, we deduce that $\partial_\theta \phi(t_0,\theta_0)\ge 0$.
\end{remark}
As usual in the theory of viscosity solution for integro -differential equations, it is more convenient to work with a more local reformulation. 
We introduce local and non local operator associated to $A_0$ as in \cite{arisawa2006,barles2008,bertucci2022spectral}. For $\delta>0$ we define the operators on $C^{1,1}_{2\pi}$, $I_{1,\delta}$ and $I_{2,\delta}$ with: 
\begin{align*}
I_{1,\delta}[\phi](\theta)&=\int_{\T\cap|\theta'|\le \delta}\cfrac{2\phi(\theta)-\phi(\theta-\theta')-\phi(\theta+\theta')}{\sin^2(\theta'/2)}\cfrac{d\theta'}{4}\\
I_{2,\delta}[\phi](\theta)&=\int_{\T\cap|\theta'|> \delta}\cfrac{2\phi(\theta)-\phi(\theta-\theta')-\phi(\theta+\theta')}{\sin^2(\theta'/2)}\cfrac{d\theta'}{4}.
\end{align*}

\begin{proposition}
Let $F$ be a subsolution (resp. supersolution) of $~\eqref{eqPrimitive}$. Then for all $\phi\in C^{1,1}_{t,2\pi}$, $\delta>0$ and $(t_0,\theta_0)\in (0,+\infty)\times \R$ such that $(F-\phi) (t_0,\theta_0)=0$ and $(F-\phi)(t,\theta)\le 0$ (resp. $\ge  0$) for any $(t,\theta)\in B((t_0,\theta_0),\delta)$, the following holds: $$\partial_t \phi(t_0,\theta_0)+(\partial_\theta \phi(t_0,\theta_0))_+\left( I_{1,\delta}[\phi(t_0,.)](\theta_0)+I_{2,\delta}[F(t_0,.)](\theta_0)\right)\le 0\text{ (resp. }\ge 0).$$
\end{proposition}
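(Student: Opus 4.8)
I carry out the subsolution case; the supersolution case is obtained by reversing every inequality and swapping maxima for minima, and I indicate the change at the end. Fix $\phi\in C^{1,1}_{t,2\pi}$, $\delta>0$ and $(t_0,\theta_0)\in(0,+\infty)\times\R$ with $(F-\phi)(t_0,\theta_0)=0$ and $(F-\phi)\le 0$ on $B:=B((t_0,\theta_0),\delta)$. The plan is to replace the test function $\phi$ by the function that coincides with $\phi$ on $B$ and with $F$ outside $B$, so that the near part of the nonlocal operator still sees $\phi$ while the far part sees $F$. Thus I set $w:=\phi$ on $B$ and $w:=F$ on the complement of $B$. Then $F\le w$ on all of $(0,+\infty)\times\R$, with equality at $(t_0,\theta_0)$, so $(t_0,\theta_0)$ is a \emph{global} maximum point of $F-w$; moreover $w$ equals $\phi$ on the neighbourhood $B$ of $(t_0,\theta_0)$, hence is $C^{1,1}$ there, and $A_0[w(t_0,\cdot)](\theta_0)$ is well defined --- near $\theta'=0$ the second difference of $\phi(t_0,\cdot)$ is $O((\theta')^2)$ by the Lipschitz bound on $\partial_\theta\phi$, which absorbs the $1/\sin^2(\theta'/2)$ singularity, and for $|\theta'|\ge\delta$ one uses $\sin^2(\theta'/2)\ge\sin^2(\delta/2)>0$ together with the local boundedness of $F(t_0,\cdot)$.

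Next I would invoke the standard fact, in the viscosity theory of integro-differential equations, that the subsolution property of $F$ --- stated in Definition~\ref{def viscosity sol} through globally $C^{1,1}_{t,2\pi}$ test functions --- may equally be tested against functions that are $C^{1,1}$ only near the contact point and above which $F$ lies, with the nonlocal operator evaluated using that function near the point and using $F$ far from it; see \cite{arisawa2006,barles2008,crandall1992,bertucci2022spectral}. Applied at the global maximum $(t_0,\theta_0)$ of $F-w$ this yields
\begin{equation*}
\partial_t w(t_0,\theta_0)+\big(\partial_\theta w(t_0,\theta_0)\big)_+A_0[w(t_0,\cdot)](\theta_0)\le 0 .
\end{equation*}
I would prove this intermediate inequality as in the references: approximate $w$ from above by genuine $C^{1,1}_{t,2\pi}$ functions obtained by mollifying the upper semicontinuous function $F$ away from $B$ while keeping agreement with $\phi$ on $B$, apply the inequality of Definition~\ref{def viscosity sol} to these test functions at the corresponding (perturbed) maximum points, and pass to the limit. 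This last step is where the work lies, and is the main obstacle: convergence of the local terms is immediate by continuity of $\phi$, $\partial_t\phi$, $\partial_\theta\phi$, while for the nonlocal term one splits the integral, uses dominated convergence near $\theta'=0$ via the uniform $C^{1,1}$ control inherited from $\phi$, and uses Fatou's lemma together with the upper semicontinuity and local boundedness of $F$ for the contribution of $|\theta'|$ bounded away from $0$.

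Finally I would identify the quantities in the displayed inequality at $(t_0,\theta_0)$. Since $w=\phi$ on $B$, we have $\partial_t w(t_0,\theta_0)=\partial_t\phi(t_0,\theta_0)$ and $\partial_\theta w(t_0,\theta_0)=\partial_\theta\phi(t_0,\theta_0)$; also $I_{1,\delta}[w(t_0,\cdot)](\theta_0)=I_{1,\delta}[\phi(t_0,\cdot)](\theta_0)$, because $I_{1,\delta}$ only involves the values $w(t_0,\theta_0\pm\theta')$ with $|\theta'|\le\delta$, which --- apart from the null set $|\theta'|=\delta$ --- are points of $B$ where $w$ coincides with $\phi$. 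On the other hand, for $|\theta'|>\delta$ the points $(t_0,\theta_0\pm\theta')$ lie outside $B$, so $w(t_0,\theta_0\pm\theta')=F(t_0,\theta_0\pm\theta')$, and $w(t_0,\theta_0)=F(t_0,\theta_0)$, hence $I_{2,\delta}[w(t_0,\cdot)](\theta_0)=I_{2,\delta}[F(t_0,\cdot)](\theta_0)$. Since $A_0[w(t_0,\cdot)](\theta_0)=I_{1,\delta}[w(t_0,\cdot)](\theta_0)+I_{2,\delta}[w(t_0,\cdot)](\theta_0)$, substituting turns the displayed inequality into
\begin{equation*}
\partial_t\phi(t_0,\theta_0)+\big(\partial_\theta\phi(t_0,\theta_0)\big)_+\big(I_{1,\delta}[\phi(t_0,\cdot)](\theta_0)+I_{2,\delta}[F(t_0,\cdot)](\theta_0)\big)\le 0 ,
\end{equation*}
which is the claimed inequality for subsolutions. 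For supersolutions, the same $w$ realizes a global minimum of $F-w$; approximating $F$ from below instead and reversing all the inequalities gives the ``$\ge 0$'' version, completing the proof.
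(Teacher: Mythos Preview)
The paper does not prove this proposition; it is stated without argument, in line with the cited references \cite{arisawa2006,barles2008,bertucci2022spectral} where the equivalence between the global and the local--nonlocal formulations is standard. Your outline is precisely that standard argument: patch $\phi$ near the contact point with $F$ away from it into a single comparison function $w$, observe that $(t_0,\theta_0)$ is then a \emph{global} maximum of $F-w$, approximate $w$ from above by genuine $C^{1,1}_{t,2\pi}$ test functions, apply Definition~\ref{def viscosity sol}, and pass to the limit in $I_{2,\delta}$. The identification $I_{1,\delta}[w(t_0,\cdot)](\theta_0)=I_{1,\delta}[\phi(t_0,\cdot)](\theta_0)$ and $I_{2,\delta}[w(t_0,\cdot)](\theta_0)=I_{2,\delta}[F(t_0,\cdot)](\theta_0)$ is correct for the reasons you give.

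One small technical correction: mollifying an usc function does not in general yield something lying above it, so the phrase ``mollifying $F$ away from $B$'' is not quite right. You should instead use that an usc, locally bounded function is a decreasing pointwise limit of continuous (indeed Lipschitz) functions, and then smooth those while preserving the periodicity structure of $\mathcal F_{t,2\pi}$ and the agreement with $\phi$ on a slightly smaller ball. With $\psi_n\searrow w$ the integrand of $I_{2,\delta}[\psi_n(t_0,\cdot)](\theta_0)$ is monotone in $n$, so monotone convergence (rather than Fatou) gives $I_{2,\delta}[\psi_n(t_0,\cdot)](\theta_0)\to I_{2,\delta}[F(t_0,\cdot)](\theta_0)$; since $(\partial_\theta\phi(t_0,\theta_0))_+\ge 0$, the inequality passes to the limit. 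This is cosmetic; your argument is sound and is what the references carry out.
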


We can state a comparison principle for $\eqref{Primitive}$ as it is expected from viscosity theory with partial integro-differential term \cite{arisawa2006,barles2008}. We prove this functional comparison principle in our setting. 
\begin{theorem}
\label{thm:comparison principle}
Assume that $u\in \mathcal F_{t,2\pi}$ and $v\in \mathcal F_{t,2\pi}$ are respectively bounded viscosity subsolution and supersolution to $~\eqref{eqPrimitive}$. If $u(0,.)\le v(0,.)$, then for all time $t$, $u(t,.)\le v(t,.)$.
\end{theorem}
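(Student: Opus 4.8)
The plan is to establish the comparison principle by the classical doubling-of-variables technique of viscosity solution theory, adapted to the integro-differential operator $A_0$ and to the periodicity constraint on elements of $\mathcal F_{t,2\pi}$. The key structural ingredients are: (i) the maximum principle for $A_0$ (and its splitting $A_0 = I_{1,\delta} + I_{2,\delta}$), which replaces the usual ellipticity argument for second-order terms; (ii) the local reformulation of sub/supersolutions stated in the Proposition just above, which lets us evaluate the nonlocal term on $F$ itself outside a small ball and on a smooth test function inside it; and (iii) a penalization in both space and time to force the doubled maximum to be attained at an interior point, handling the fact that $\R$ is noncompact but that the relevant functions are $2\pi$-quasiperiodic (so, up to subtracting the affine part $a\theta/(2\pi)$, the problem is effectively on the compact torus).

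\emph{Step 1: Reduction and setup.} First I would suppose for contradiction that $\sup_{t,\theta}(u(t,\theta)-v(t,\theta)) =: M > 0$ for some $t>0$. Since $u,v\in\mathcal F_{t,2\pi}$ are bounded, the difference $u-v$ is $2\pi$-periodic in $\theta$ and bounded, so the supremum over $[0,T]\times\R$ is attained modulo periodicity. I would then introduce, for parameters $\varepsilon,\eta,\gamma>0$, the doubled function
\begin{equation*}
\Phi(t,s,\theta,\xi) = u(t,\theta) - v(s,\xi) - \frac{|\theta-\xi|^2}{2\varepsilon} - \frac{|t-s|^2}{2\eta} - \frac{\gamma}{T-t},
\end{equation*}
and let $(\bar t,\bar s,\bar\theta,\bar\xi)$ be a point of maximum (existence again by periodicity of $u-v$ together with the coercivity of the penalization terms). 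Standard lemmas then give $|\bar\theta-\bar\xi|^2/\varepsilon \to 0$, $|\bar t-\bar s|^2/\eta\to 0$, and that $\Phi$ at the maximum converges to $M - \gamma/(T-\bar t)$ as $\varepsilon,\eta\to 0$; in particular, for $\gamma$ small, $\bar t, \bar s > 0$ once the penalization in $\varepsilon,\eta$ is taken small enough, so we are at an interior point in time and can use the sub/supersolution inequalities.

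\emph{Step 2: Using the two inequalities and the nonlocal term.} Applying the subsolution property to $u$ at $(\bar t,\bar\theta)$ with the test function $\theta\mapsto v(\bar s,\theta+\bar\xi-\bar\theta) + |\theta-\bar\theta|^2/(2\varepsilon)+\cdots$ (translated appropriately so the comparison is valid near $\bar\theta$), and the supersolution property to $v$ at $(\bar s,\bar\xi)$, subtract the two inequalities. The time-derivative terms combine to produce $\gamma/(T-\bar t)^2 > 0$ plus terms that vanish as $\eta\to 0$. The first-order terms $(\partial_\theta\phi)_+$ match up because the gradient of the penalization is $(\bar\theta-\bar\xi)/\varepsilon$ at both points. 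The crucial and delicate part is the nonlocal term: using the local formulation, the $I_{1,\delta}$ part is evaluated on the smooth test function and is controlled by $O(\delta)$ (times the second derivative of the penalization, which is $1/\varepsilon$, so one needs $\delta = \delta(\varepsilon)\to 0$ fast enough), while the $I_{2,\delta}$ parts are evaluated on $u$ and $v$ themselves and, after combining, are bounded using that $(\bar t,\bar s,\bar\theta,\bar\xi)$ maximizes $\Phi$, which gives $u(\bar t,\bar\theta-\theta') - u(\bar t,\bar\theta) \le v(\bar s,\bar\xi-\theta') - v(\bar s,\bar\xi) + (\text{penalization increments that cancel})$ for every $\theta'$; integrating against the positive kernel $\sin^{-2}(\theta'/2)\,d\theta'/4$ on $|\theta'|>\delta$ shows the difference of the two $I_{2,\delta}$ terms has a favorable sign. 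Putting everything together yields $0 < \gamma/(T-\bar t)^2 \le o(1)$ as $\delta,\varepsilon,\eta\to 0$ in the right order, the desired contradiction.

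\emph{Main obstacle.} The hard part will be the careful bookkeeping of the nonlocal term: unlike the real-line second-order case, $A_0$ is a genuinely nonlocal operator with a non-integrable kernel at the origin, so one must simultaneously (a) split at scale $\delta$ and send $\delta\to0$ while keeping the singular part's contribution $\sim \delta/\varepsilon$ under control by choosing $\delta\ll\varepsilon$, and (b) verify that the far part, evaluated on the non-smooth $u$ and $v$, combines with a genuinely good sign — this uses the maximality of $\Phi$ at the translated test points \emph{for all} shifts $\theta'$, which is exactly why the penalization must be purely quadratic in $\theta-\xi$ (so translation invariance of $|\cdot|^2$ makes the penalization increments cancel in the integrand). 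A secondary technical point is that the local reformulation Proposition requires the contact to be two-sided in a full ball $B((t_0,\theta_0),\delta)$, so the doubling must be arranged to produce such a strict local extremum; this is routine but must be stated. Finally, the passage from the inequality for $u,v$ to the statement $u(t,\cdot)\le v(t,\cdot)$ for all $t$ (not just $t<T$) follows by letting $T\to\infty$, and the initial condition $u(0,\cdot)\le v(0,\cdot)$ is used to rule out the case $\bar t = \bar s = 0$ in Step 1.
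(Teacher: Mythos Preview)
Your proposal is correct and follows essentially the same route as the paper: doubling of variables, the local $I_{1,\delta}/I_{2,\delta}$ reformulation, the good sign of the $I_{2,\delta}$ combination from maximality, and the choice $\delta \ll \varepsilon$ to kill the singular part. The only notable differences are cosmetic penalization choices---you use $\gamma/(T-t)$ and the $2\pi$-periodicity of $u-v$ (which does hold here since $u(0,\cdot)\le v(0,\cdot)$ forces the two quasiperiodicity constants to coincide, a point you should state explicitly) to get compactness, whereas the paper uses a strict subsolution $u-\gamma t$ and an extra spatial localization $\beta(|\theta|^2+|\theta'|^2)$; also note that your test function cannot literally contain $v(\bar s,\cdot)$ since $v$ is not smooth, so the test must be the pure quadratic penalization (modified outside a $\delta$-ball to lie in $C^{1,1}_{t,2\pi}$), exactly as the paper does.
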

\begin{proof}
Let $\gamma>0$, instead of considering $u$ we could consider $u_{\gamma}(t,\theta)=u(t,\theta)-\gamma t$ which is a $\gamma$ strict subsolution in the sense that if $\phi\in C^{1,1}_{t,2\pi}$, $\delta>0$ and $(t_0,\theta_0)\in (0,+\infty)\times \R$ are such that $(u_{\gamma}-\phi) (t_0,\theta_0)=0$ and $(u_{\gamma}-\phi)(t,\theta)\le 0$ then for any $(t,x)\in B((t_0,x_0),\delta)$, the following holds: $$\partial_t \phi(t_0,\theta_0)+(\partial_\theta \phi(t_0,\theta_0))_+\left( I_{1,\delta}[\phi(t_0,.)](\theta_0)+I_{2,\delta}[u_{\gamma}(t_0,.)](\theta_0)\right)\le -\gamma <0.$$
Hence if we prove that for all $t,\theta$, $u_{\gamma}(t,\theta)\le v(t,\theta)$ for all $\gamma>0$ , then by taking the limit $\gamma\to 0$ we shall recover $u\le v$. Hence, we can assume $u$ is a $\gamma$ strict subsolution in the proof and not just a subsolution. 
\newline
We argue by contradiction and suppose that there exists $t_0>0$, $\theta_0\in \R $ such that $u(t_0,\theta_0)> v(t_0,\theta_0)$. Let $T>t_0$ and let use the classical technique of doubling variables. Thanks to the hypothesis, there exists $\alpha>0$ such that for all $\varepsilon>0$:
$$\sup\{u(t,\theta)-v(s,\theta')-\cfrac{1}{2\varepsilon}(\theta-\theta')^2-\cfrac{1}{2\varepsilon}(t-s)^2, \, t,s\in [0,T], \, \theta,\theta'\in \R^2\}>\alpha.$$
We shall justify that this supremum is actually a maximum by an usual localisation argument. Indeed, for a bounded $F\in \mathcal F_{t,2\pi}$ we have $a\in \R$ such that $F(t,\theta+2\pi)=F(t,\theta)+a$ for all $t,\theta$ and so we can deduce that there exists $K>0$ such that for all $t\ge 0$, $\theta\in \R$, $|F(t,\theta)|\le K(1+|\theta|)$. 
\newline
So for $u$ and $v$ in $\mathcal F_{t,2\pi}$ and bounded there exists $L>0$ such that for all $(t,s)\in \R^+$, for all $(\theta,\theta')\in \R^2$, $u(t,\theta)-v(s,\theta')\le L(1+|\theta|+|\theta'|)$ (this fact shall be called sublinearity). 
\newline
Now we can consider for $\beta>0$, $\varepsilon >0$ the following supremum: $$\sup\{u(t,\theta)-v(s,\theta')-\cfrac{1}{2\varepsilon}(\theta-\theta')^2-\cfrac{1}{2\varepsilon}(t-s)^2-\beta(|\theta|^2+|\theta'|^2), \, t,s\in [0,T], \, \theta,\theta'\in \R^2\}.$$
For $\beta>0$ small enough this supremum is greater than $\alpha/2>0.$ Moreover thanks to the sublinearity and the fact that $u$ is usc and $v$ is lsc, this supremum is a maximum.
\newline
Let $t^*,s^*,\theta^*,\theta'^*\in [0,T]^2\times \R^2$  be a point of maximum. We can classically assume that for $\varepsilon$ small enough, $t^*$ and $s^*$ are positive thanks to the fact that $\alpha>0$.
\newline
We want to use as tests functions: $$\phi_1(t,\theta)=v(s^*,\theta'^*)+\cfrac{1}{2\varepsilon}(\theta-\theta'^*)^2+\cfrac{1}{2\varepsilon}(t-s^*)^2+\beta(|\theta|^2+|\theta'^*|^2),$$ for $u$ and: $$\phi_2(s,\theta')=u(t^*,\theta^*)-\cfrac{1}{2\varepsilon}(\theta^*-\theta')^2-\cfrac{1}{2\varepsilon}(t^*-s)^2-\beta(|\theta^*|^2+|\theta'|^2),$$ for $v$.
\newline
The matter is of course that these functions are not in $C^{1,1}_{t,2\pi}$.
\newline
Take a small $\delta>0$ that will be specified later on.  We can find a function $\tilde \phi_1$ such that $\tilde \phi_1$ is equal to $\phi_1$ in $B((t^*,\theta^*),\delta)$ and which is in $C^{1,1}_{t,2\pi}$. Indeed for $\delta<\pi/2$, by localisation we can construct $\tilde \phi_1$ such that $\tilde \phi_1$ is equal to $\phi_1$ in $B((t^*,\theta^*),\delta)$ and impose that $\tilde\phi_1(t,\theta^*+\pi)=\tilde\phi_1(t,\theta^*-\pi)+1$ for all $t$ to define $\tilde\phi_1(t,.)$ on $[\theta^*-\pi,\theta^*+\pi]$ and then extend periodically $\tilde\phi_1(t,.)$ for all $t>0$.
We can do the same for $\phi_2$. We still call $\phi_1$ and $\phi_2$ these modifications of $\phi_1$ and $\phi_2$ that are in $C^{1,1}_{t,2\pi}$. 
\newline
Moreover to lighten the computations that shall follow we can forget the factor in $\beta$ since for instance if we look at $\gamma_\beta(\theta)=\beta(|\theta|^2+|\theta'^*|^2)$ we have that $\gamma_\beta''$ is uniformly bounded in $\theta$ by $2\beta$ and so we have that $I_{1,\delta}[\gamma_\beta](\theta'^*)$ converges uniformly in $\delta$ to 0 when $\beta$ converges to 0. So if we let $\beta$ converges to 0 first in the sub viscosity formulation this part disappears and same for the super viscosity formulation \cite{barles2008}. We still write $\phi_1$ and $\phi_2$ these functions without the term $\beta(|\theta|^2+|\theta'|^2)$.
\newline
By using $\phi_1$ and $\phi_2$ as test functions in the definition of subsolution and supersolution we have that: $$\cfrac{1}{\varepsilon}(t^*-s^*)+\cfrac{1}{\varepsilon}(\theta^*-\theta'^*)_+\left( I_{1,\delta}[\phi_1(t^*,.)](\theta^*)+I_{2,\delta}[u(t^*,.)](\theta^*)\right)\le -\gamma$$
and: $$\cfrac{1}{\varepsilon}(t^*-s^*)+\cfrac{1}{\varepsilon}(\theta^*-\theta'^*)_+\left( I_{1,\delta}[\phi_2(s^*,.)](\theta'^*)+I_{2,\delta}[v(s^*,.)](\theta'^*)\right)\ge 0.$$
We subtract the first inequality to the second and we have: \begin{equation}
\cfrac{1}{\varepsilon}(\theta^*-\theta'^*)_+\left( I_{1,\delta}[\phi_2(s^*,.)](\theta'^*)-I_{1,\delta}[\phi_1(t^*,.)](\theta^*)+I_{2,\delta}[v(s^*,.)](\theta'^*)-I_{2,\delta}[u(t^*,.)](\theta^*)\right)\ge \gamma.
\label{ineqfinal}
\end{equation}
Firstly, we look at the $I_{2,\delta}$ part: 
\begin{align*}
&I_{2,\delta}[v(s^*,.)](\theta'^*)-I_{2,\delta}[u(t^*,.)](\theta^*)=\\
&\int_{\T\cap|z|>\delta}\cfrac{v(s^*,\theta'^*)-v(s^*,\theta'^*+z)-v(s^*,\theta'^*-z)-(2u(t^*,\theta^*)-u(t^*,\theta^*+z)-u(t^*,\theta^*-z))}{\sin^2(z/2)}\cfrac{dz}{4}.
\end{align*}
But using that $(t^*,s^*,\theta^*,\theta'^*)$ is a point of maximum we have that for all $z\in \R$: 
\begin{align*}
u(t^*,\theta^*)-v(s^*,\theta'^*)\ge u(t^*,\theta^*+z)-v(s^*,\theta'^*+z)\\
u(t^*,\theta^*)-v(s^*,\theta'^*)\ge u(t^*,\theta^*-z)-v(s^*,\theta'^*-z).
\end{align*}
By adding these two inequalities we see that the previous numerator is actually non negative. Hence we have: $$I_{2,\delta}[v(s^*,.)](\theta'^*)-I_{2,\delta}[u(t^*,.)](\theta^*)\le 0.$$
Then for the terms in $I_{1,\delta}$ we notice that since $\phi_1(t^*,.)$ (resp. $\phi_2(s^*,.)$) is bounded in $C^2$ by $\varepsilon^{-1}$ in $B(\theta'^*,\delta)$ (resp. $B(\theta^*,\delta)$), we have: $$I_{1,\delta}[\phi_2(s^*,.)](\theta'^*)-I_{1,\delta}[\phi_1(t^*,.)](\theta^*)\le  \cfrac{2\delta}{\varepsilon}. $$
Hence, using these two results in $~\eqref{ineqfinal}$ we have that for $\delta,\varepsilon$ small enough: $$\cfrac{2\delta}{\varepsilon}\cfrac{(\theta^*-\theta'^*)_+}{\varepsilon}\ge \gamma.$$
We chose $\delta=\varepsilon^{3/2}$ and since we know from classical viscosity solution \cite{crandall1992} that $(\theta^*-\theta'^*)^2/\varepsilon \longrightarrow_{\varepsilon \to 0} 0$, we deduce that: $$\cfrac{2\delta}{\varepsilon^{3/2}}\cfrac{(\theta^*-\theta'^*)_+}{\sqrt{\varepsilon}}\longrightarrow_{\varepsilon \to 0} 0.$$
We obtain that $0 \ge \gamma$ which is a contradiction. 
\end{proof}

\subsection{Uniqueness of viscosity solution}
Thanks to comparison principle we can classically obtain uniqueness of viscosity solution for $~\eqref{Primitive}$. The proofs are similar with the real Dyson case \cite{bertucci2022spectral,bertucci2024spectral}.
\begin{corollary}
Let $F$ be a bounded viscosity solution to $~\eqref{eqPrimitive}$ with initial condition $F_0\in\mathcal F_{2\pi}$. We suppose that $F_0$ is non decreasing, usc and bounded. Then $F(t,.)$ is non decreasing for all time. 
\end{corollary}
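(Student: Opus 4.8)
The plan is to exploit the comparison principle (Theorem~\ref{thm:comparison principle}) together with the translation invariance of the equation~\eqref{eqPrimitive} in the space variable. The key observation is that if $F(t,\theta)$ is a viscosity solution of~\eqref{eqPrimitive}, then for any fixed $h>0$ the shifted function $F_h(t,\theta):=F(t,\theta+h)$ is again a viscosity solution: indeed, the operator $A_0$ commutes with translations in $\theta$ (it is a convolution-type operator whose kernel $\sin^{-2}(\theta'/2)$ does not depend on $\theta$), and $\partial_t$, $\partial_\theta$ obviously commute with the shift; moreover $F_h\in\mathcal F_{t,2\pi}$ with the same constant $a$, and $F_h$ remains bounded. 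So $F_h$ is a bounded viscosity solution, hence in particular a bounded viscosity subsolution.

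Next I would compare $F_h$ with $F$ using Theorem~\ref{thm:comparison principle}. Since $F_0$ is non decreasing, we have $F_h(0,\theta)=F_0(\theta+h)\ge F_0(\theta)=F(0,\theta)$ for all $\theta$, i.e. the subsolution $F_h$ lies below \emph{$F$ at time $0$}... wait, the inequality goes the other way: $F_h(0,\cdot)\ge F(0,\cdot)$. So I should instead take $F$ as the subsolution and $F_h$ as the supersolution. Concretely: $F$ is a bounded viscosity subsolution, $F_h$ is a bounded viscosity supersolution, and $F(0,\cdot)\le F_h(0,\cdot)$ because $F_0$ is non decreasing. Theorem~\ref{thm:comparison principle} then yields $F(t,\theta)\le F_h(t,\theta)=F(t,\theta+h)$ for all $t\ge 0$ and all $\theta\in\R$. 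Since $h>0$ was arbitrary, this says precisely that $F(t,\cdot)$ is non decreasing for every $t$, which is the claim.

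One subtlety to address carefully is the role of the semicontinuous envelopes in the definition of ``viscosity solution with initial data''. A priori we only know $F^*(0,\cdot)\le F_0^*$ and $(F_0)_*\le F_*(0,\cdot)$, so to apply the comparison principle I need to check that $F(0,\cdot)\le F_h(0,\cdot)$ holds in the appropriate envelope sense and that the hypotheses ``$u(0,\cdot)\le v(0,\cdot)$'' of Theorem~\ref{thm:comparison principle} are met with $u=F$ and $v=F_h$. Using $F^*(0,\cdot)\le F_0^*$ and that $F_0$ non decreasing implies $F_0^*$ non decreasing, together with $(F_0)_*\le F_*(0,\cdot)$ and $(F_0)_*(\cdot+h)\le (F_0)_*(\cdot)$ is false in general—rather $(F_0)_*$ non decreasing gives $(F_0)_*(\theta)\le(F_0)_*(\theta+h)$—one deduces $F^*(0,\theta)\le F_0^*(\theta)\le F_0^*(\theta+h)$ hmm, I need $F_0^*(\theta)\le (F_0)_*(\theta+h)$ which requires a bit of care using monotonicity. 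The cleanest route is to note that for a non decreasing $F_0$ and $h>0$, $F_0^*(\theta)\le (F_0)_*(\theta+h)$ does hold (the upper envelope at $\theta$ is at most the value slightly to the right, which is at most the lower envelope at $\theta+h$), so that $F^*(0,\theta)\le F_0^*(\theta)\le (F_0)_*(\theta+h)\le F_*(0,\theta+h)=(F_h)_*(0,\theta)$, which is exactly the initial inequality needed between the usc subsolution $F$ and lsc supersolution $F_h$.

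I expect the main obstacle to be purely bookkeeping: verifying that the shift $F_h$ genuinely satisfies every clause of Definition~\ref{def viscosity sol} (in particular that $F_h$ is still in $\mathcal F_{t,2\pi}$ and bounded, and that shifting a test function $\phi$ produces an admissible test function for $F_h$ since $C^{1,1}_{t,2\pi}$ is translation invariant), and handling the semicontinuous envelopes at $t=0$ as above. The analytic content—translation invariance of $A_0$ and the comparison principle—is already in hand, so no new estimate is required; the argument is the standard ``compare a solution with its own translate'' trick adapted to the periodic setting.
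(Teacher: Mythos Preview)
Your proposal is correct and follows essentially the same route as the paper: exploit translation invariance of \eqref{eqPrimitive} to compare $F$ (as subsolution) with its spatial shift $F_*( \cdot,\cdot+h)$ (as supersolution) via Theorem~\ref{thm:comparison principle}, and conclude monotonicity from $F(t,\theta)\le F(t,\theta+h)$. The paper inserts an additional vertical margin $+\varepsilon$ in the supersolution before letting $\varepsilon\to0$, but your direct observation that $F_0^*(\theta)\le (F_0)_*(\theta+h)$ for non-decreasing $F_0$ and $h>0$ makes that step unnecessary; otherwise the two arguments are the same.
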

\begin{proof}
By monotonicity of $F_0$, since the initial condition of $F$ is $F_0$ we get that for all $\varepsilon>0,\, \varepsilon'>0$, for all $x\in\R$: $$F(0,x)\le F_0(x)\le (F_0(x+\varepsilon'))_*+\varepsilon\le F_*(0,x+\varepsilon')+\varepsilon .$$
Using Theorem \ref{thm:comparison principle} with $u=F$ as a subsolution and $v(t,x)=F_*(t,x+\varepsilon')+\varepsilon$ as a supersolution we get that for all $\varepsilon>0,\, \varepsilon'>0$, $\forall t\ge 0$, $\forall x\in\R$:
\begin{equation}
\label{eq: ineg croissance F}
F(t,x)\le F_*(t,x+\varepsilon')+\varepsilon\le F(t,x+\varepsilon')+\varepsilon.
\end{equation}
For $y>x$, we choose $\varepsilon'=y-x$ in \eqref{eq: ineg croissance F}. Let $\varepsilon$ goes to $0$ to obtain that for all $t\ge 0$, for all $x<y$, $F(t,x)\le F(t,y)$.
\end{proof}
\begin{theorem}
Given a non-decreasing usc and bounded function $F_0\in \mathcal F_{2\pi}$, there exists at most one bounded viscosity solution $F$ of $~\eqref{Primitive}$ with initial condition $F_0$.
\end{theorem}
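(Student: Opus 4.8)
The plan is to derive uniqueness from the comparison principle, Theorem~\ref{thm:comparison principle}, exactly as in the real Dyson case \cite{bertucci2022spectral,bertucci2024spectral}. Let $F$ and $G$ be two bounded viscosity solutions of \eqref{Primitive} with the same initial datum $F_0$. By definition each is a bounded viscosity solution of \eqref{eqPrimitive} and, for every $t\ge 0$, the sections $F(t,\cdot)$ and $G(t,\cdot)$ are non-decreasing; being moreover usc, these sections are right-continuous in the space variable. It then suffices to show $F\le G$, the reverse inequality following by exchanging the roles of $F$ and $G$.

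The obstruction is that the comparison principle requires the ordering $u(0,\cdot)\le v(0,\cdot)$ pointwise, whereas from the notion of initial datum we only know $F(0,\cdot)\le F_0$ and $(F_0)_*\le G_*(0,\cdot)$ (using that $F_0$ is usc), and $F_0$ may have jumps, so that $F_0$ and $(F_0)_*$ need not agree. To get around this I would use the spatial shift already employed in the proof of the previous corollary. Fix $\varepsilon'>0$ and set $v(t,\theta):=G_*(t,\theta+\varepsilon')$. Since \eqref{eqPrimitive} is invariant under translations in the space variable --- the operators $\partial_\theta$ and $A_0$ commute with translations and $A_0$ annihilates constants --- the function $v$ is again a bounded viscosity supersolution of \eqref{eqPrimitive} and belongs to $\mathcal F_{t,2\pi}$. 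Moreover, because $F_0$ is non-decreasing one has $(F_0)_*(x+\varepsilon')=\sup_{y<x+\varepsilon'}F_0(y)\ge F_0(x)$, so that
\[
F(0,x)\le F_0(x)\le (F_0)_*(x+\varepsilon')\le G_*(0,x+\varepsilon')=v(0,x)\qquad\text{for every }x\in\R .
\]

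Applying Theorem~\ref{thm:comparison principle} to the subsolution $F$ and the supersolution $v$ then yields $F(t,x)\le v(t,x)=G_*(t,x+\varepsilon')\le G(t,x+\varepsilon')$ for all $t\ge 0$ and $x\in\R$. Letting $\varepsilon'\downarrow 0$ and using the right-continuity of $G(t,\cdot)$ gives $F(t,x)\le G(t,x)$ everywhere, and exchanging $F$ and $G$ gives the opposite inequality, whence $F=G$. I expect the only delicate points to be the verification that the shifted function $v$ remains a viscosity supersolution (which is where translation invariance of $A_0$ is genuinely used) and the treatment of the initial layer in the presence of jumps of $F_0$; the rest is a direct invocation of the comparison principle together with the elementary fact that a non-decreasing usc function is right-continuous.
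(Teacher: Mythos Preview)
Your argument is correct and follows the same route as the paper's proof: shift the supersolution in space, use monotonicity of $F_0$ to compare initial data through the lower semicontinuous envelope, apply Theorem~\ref{thm:comparison principle}, and let the shift go to zero. The only cosmetic difference is that the paper adds an extra $+\varepsilon$ to the shifted supersolution, writing $v_\varepsilon(t,x)=G_*(t,x+\varepsilon)+\varepsilon$, whereas you observe directly that $F_0(x)\le (F_0)_*(x+\varepsilon')$ for non-decreasing $F_0$, making the additive constant unnecessary; both versions conclude identically.
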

\begin{proof}
Assume that there exists $F$ and $G$ which are viscosity solutions of $\eqref{eqPrimitive}$ with initial condition $F_0$.
Let $\varepsilon>0$. Since $F$ and $G$ have the same initial condition $F_0$ we get that for all $x\in\R$: 
$$F(0,x)\le F_0(x)\le (F_0(x+\varepsilon))_*+\varepsilon\le G_*(0,x+\varepsilon)+\varepsilon.$$
Using Theorem \ref{thm:comparison principle} with $u=F$ as a viscosity subsolution and $v_\varepsilon(t,x)=G_*(t,x+\varepsilon)+\varepsilon$ as a supersolution, we get that for all $t\ge0$, for all $x\in\R$: $$F(t,x)\le G_*(t,x+\varepsilon)+\varepsilon.$$
Letting $\varepsilon$ goes to $0$ yields that for all $t\ge 0$, for all $x\in\R$, $$F(t,x)\le G(t,x).$$
By symmetry of $F$ and $G$ we get that $F=G$.
\end{proof}
\begin{remark}
Let us notice that in the previous comparison principle we do not have an hypothesis like Lipschitz for one of the two functions we compare. Hence, we do not have to prove existence of a solution to $\eqref{Primitive}$ starting from a smooth initial data as it was done in \cite{bertucci2022spectral}. 
\end{remark}

\section{Convergence of the system of particles}

\subsection{Periodic system of $N$ particles}
In Section \ref{se:matrix to part} we proved the existence of the system of $N$ particles of the unitary Dyson case. To state a discrete comparison principle, we shall introduce a periodic system of $N$ particles obtained by periodizing the system of $N$ particles. Indeed, stating a comparison principle for two systems of particles is not clear due to the fact that there is no global order on $\T$. We will define a periodization of the system to state that we will be able to compare two systems of particles on $\T$, solutions of \eqref{dysoncircleeq} for a suitable angle of reference.
\newline
From now on, for an integer $i\in \Z$ we write $i[N]$ the unique integer in $[0,N)$ such that $N$ divides $i-i[N]$.
\begin{definition}
\label{def:firstdysonperio}
A system of real particles $(\lambda_.^i)_{i\in \Z}$ satisfies the $N$ periodic Dyson system if it satisfies the following system:
\begin{equation}
\left\{ 
\begin{split}
\forall i\in \Z,\,\, &d\lambda_t^i=\cfrac{1}{N}\dis\sum_{k=1}^{N-1}\cotan((\lambda_t^i-\lambda_t^{i+k})/2)\,dt+\cfrac{2}{\sqrt{N}}\,dB_t^{i[N]},\\
\forall i\in \Z,\,\forall t\ge 0,\, &\lambda_t^i=\lambda_t^{i[N]}+2\left\lfloor \cfrac{i}{N}\right\rfloor\pi,
\end{split}
\label{dysoncircper}
\right.
\end{equation}
with $(B_.^i)_{0\le i\le N-1}$ a family of independent Brownian motions.
\end{definition}
Let us remark that thanks to the fact that we impose to our particles to have a $2\pi$ periodic dynamic we can  write: 
\begin{align*}
\forall i\in \Z,\,\, d\lambda_t^i&=\cfrac{1}{N}\dis\sum_{k=1}^{N-1}\cotan((\lambda_t^i-\lambda_t^{i+k})/2)\,dt+\cfrac{2}{\sqrt{N}}\,dB_t^{i[N]}\\
&=\cfrac{1}{N}\dis\sum_{\overline{j}\ne \overline{i}\in \Z/N\Z}\cotan((\lambda_t^{\overline{i}}-\lambda_t^{\overline{j}})/2)\,dt+\cfrac{2}{\sqrt{N}}\,dB_t^{i[N]}.
\end{align*}
For this system we have existence and uniqueness of a solution given a $N$ periodic initial data thanks to the existence and uniqueness of the unitary Dyson case of Theorem $\ref{th:explo}$.
\begin{proposition}
\label{prop:firstdef}
Let $0\le\lambda_0^0<\lambda_0^1<...<\lambda_0^{N-1}<2\pi$ be $N$ initial particles. We define for $i\in \Z$ the $i^{th}$ initial particle $\lambda_0^i=\lambda_0^{i[N]}+2\left\lfloor i/N\right\rfloor\pi$. 
\newline
There exists a unique solution to $\eqref{dysoncircper}$, $(\lambda_.^i)_{i\in \Z}$, defined for all time $t\ge 0$ and which starts from $(\lambda_0^i)_{i\in\Z}$.
\newline
Moreover, for all $k\in \Z$, for all $t\ge 0$ we define $Z_t^k=\exp(i\lambda_t^k)$ and $T=\inf\{t\ge 0 :  Z_t^i=Z_t^j \text{ for } i\ne j\}$. It satisfies $T=+\infty$ almost surely. 
\end{proposition}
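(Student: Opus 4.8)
The plan is to deduce the statement directly from Theorem~\ref{th:explodyson} by observing that a solution $(\lambda_.^i)_{i\in\Z}$ of \eqref{dysoncircper} is rigidly determined by its first block $(\lambda_.^0,\dots,\lambda_.^{N-1})$ through the constraint on its second line, and that this block solves exactly the unitary Dyson system \eqref{dysononcirc} of size $N$. The point that makes this work is the $\pi$-periodicity of $\cotan$: for $0\le i\le N-1$ and $1\le k\le N-1$ the constraint $\lambda_t^{i+k}=\lambda_t^{(i+k)[N]}+2\lfloor (i+k)/N\rfloor\pi$ gives $(\lambda_t^i-\lambda_t^{i+k})/2=(\lambda_t^i-\lambda_t^{(i+k)[N]})/2-\lfloor (i+k)/N\rfloor\pi$, hence $\cotan((\lambda_t^i-\lambda_t^{i+k})/2)=\cotan((\lambda_t^i-\lambda_t^{(i+k)[N]})/2)$; moreover, as $k$ runs over $\{1,\dots,N-1\}$, the residue $(i+k)[N]$ runs bijectively over $\{0,\dots,N-1\}\setminus\{i\}$. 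Therefore the drift of $\lambda^i$ in \eqref{dysoncircper} equals $\frac1N\sum_{0\le j\le N-1,\,j\ne i}\cotan((\lambda_t^i-\lambda_t^j)/2)$, i.e. the drift of \eqref{dysononcirc}.

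For existence, I would first note that $0\le\lambda_0^0<\dots<\lambda_0^{N-1}<2\pi\le\lambda_0^0+2\pi$ gives $(\lambda_0^0,\dots,\lambda_0^{N-1})\in\R^N_>$, so Theorem~\ref{th:explodyson} (equivalently Theorem~\ref{th:explo} with $\alpha_N=N$, $\beta_N=N^2/2$) provides the unique strong solution $(\mu_.^0,\dots,\mu_.^{N-1})$ of \eqref{dysononcirc} on $[0,+\infty)$ issued from it, along which the $N$ points $\exp(i\mu_t^a)$, $0\le a\le N-1$, never collide. I then set $\lambda_t^i:=\mu_t^{i[N]}+2\lfloor i/N\rfloor\pi$ for every $i\in\Z$: the second line of \eqref{dysoncircper} holds by construction, and for the first line $d\lambda_t^i=d\mu_t^{i[N]}$ since the added quantity is constant in $t$, while the computation above, run backwards, identifies $\frac1N\sum_{k=1}^{N-1}\cotan((\lambda_t^i-\lambda_t^{i+k})/2)$ with the Dyson drift of $\mu_t^{i[N]}$ and $\frac2{\sqrt N}\,dB_t^{i[N]}$ with the noise driving the $i[N]$-th Dyson particle. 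Since each $\lambda^i$ is an affine function of $\mu^{i[N]}$, it is adapted to the same filtration, so $(\lambda_.^i)_{i\in\Z}$ is a strong solution of \eqref{dysoncircper} defined for all $t\ge0$.

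For uniqueness, if $(\lambda_.^i)_{i\in\Z}$ is any solution of \eqref{dysoncircper}, the same manipulation used forward shows that its first block solves \eqref{dysononcirc}; by the uniqueness part of Theorem~\ref{th:explodyson} this block coincides with $(\mu_.^0,\dots,\mu_.^{N-1})$, and the second line of \eqref{dysoncircper} then forces $\lambda_t^i=\mu_t^{i[N]}+2\lfloor i/N\rfloor\pi$ for all $i$. For the non-collision claim, $Z_t^i=\exp(i\lambda_t^i)=\exp\!\big(i\mu_t^{i[N]}+2\pi i\lfloor i/N\rfloor\big)=\exp(i\mu_t^{i[N]})=Z_t^{i[N]}$, so any equality $Z_t^i=Z_t^j$ with $i[N]\ne j[N]$ would give $\exp(i\mu_t^{i[N]})=\exp(i\mu_t^{j[N]})$, which never occurs for $t\ge0$ by Theorem~\ref{th:explodyson}; hence the $N$ eigenvalue-points remain pairwise distinct and $T=+\infty$ almost surely.

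I do not expect a genuine analytic obstacle here; the proof is essentially bookkeeping, and the points to be careful with are: the residue identity that $(i+k)[N]$, $1\le k\le N-1$, enumerates $\{0,\dots,N-1\}\setminus\{i\}$; the compatibility of the floor-function shifts with the SDE, which rests on the $2\pi\Z$-periodicity of $\theta\mapsto\cotan(\theta/2)$; and the fact that the periodized process inherits adaptedness from $(\mu_.^0,\dots,\mu_.^{N-1})$. One should also observe that, read literally, $Z_t^i=Z_t^{i+N}$ for every $t$, so the quantity $T$ in the statement is to be understood as the first collision among the $N$ distinct eigenvalue-points, i.e. among indices in different classes modulo $N$; it is this version that the argument above establishes.
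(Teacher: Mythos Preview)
Your proof is correct and follows essentially the same approach as the paper: construct the block $(\lambda_.^0,\dots,\lambda_.^{N-1})$ via Theorem~\ref{th:explodyson}, extend it periodically to all $i\in\Z$, and reduce both uniqueness and the non-collision statement back to the $N$-particle system. Your explicit verification of the drift identity via the $\pi$-periodicity of $\cotan$ and the residue bijection, together with your remark on how $T$ must be read (collision among distinct classes modulo $N$), simply make precise what the paper's proof leaves implicit.
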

\begin{proof}
Thanks to existence and uniqueness of the system of $N$ particles studied in Section $\ref{se:matrix to part}$, we know that we can define a solution to $\eqref{dysononcirc}$ starting from $(\lambda_0^0,...,\lambda_0^{N-1})\in\R^N_>$. This solution is defined for every time and is in $\R^N_>$ for every time. We define for every time $t\ge 0$ and for every $i\in \Z$, $\lambda_t^i=\lambda_t^{i[N]}+2\left\lfloor i/N\right\rfloor\pi$. We have that $(\lambda_.^i)_{i\in\Z}$ is solution to $\eqref{dysoncircper}$. 
\newline
The uniqueness is also clear since by the previous part the dynamic of $(\lambda_.^i)_{0\le i\le N-1}$ which starts from $(\lambda_0^0,...,\lambda_0^{N-1})\in\R^N_>$ is unique and since for every time $t\ge 0$ for every $i\in \Z$, $\lambda_t^i=\lambda_t^{i[N]}+2\left\lfloor i/N\right\rfloor\pi$ we have a unique solution to $\eqref{dysoncircper}$.
\newline
Finally about the last point we notice that for all $k\in \Z$, for all $t\ge 0$ $Z_t^k=\exp(i\lambda_t^k)=\exp(i\lambda_t^{k[N]})$. Since the trajectories of the particles are continuous we have that $T=\inf\{t\ge 0 :  Z_t^i=Z_t^j \text{ for } 0\le i\ne j\le N-1\}$ and so by Theorem $\ref{th:explodyson}$ we have $T=+\infty$ almost surely.
\end{proof}

We give another definition for a system of $N$ particles without imposing the $2\pi$ periodicity in the previous definition but to obtain it as a consequence of the study of the $N$ particles Dyson case. This new definition shall be consistent with Definition \ref{def:firstdysonperio}.
\newline
\newline
For $i\in \Z$ we write $k_{N,i}\in\Z$ the unique integer such that $i\in [k_{N,i} N, (k_{N,i}+1)N-1)$ and we denote by $\Z_{N,i}=\Z\cap[k_{N,i} N, (k_{N,i}+1)N-1)$. 
\begin{definition} 
A system of particles $(\lambda_.^i)_{i\in \Z}$ satisfies the $N$ periodic Dyson system if it satisfies the following system of SDE:
\begin{equation} 
\forall i\in \Z,\,\, d\lambda_t^i=\cfrac{1}{N}\dis\sum_{k\in \Z_{N,i}, \, k\ne i }\cotan((\lambda_t^i-\lambda_t^{k})/2)\,dt+\cfrac{2}{\sqrt{N}}\,dB_t^{i[N]},
\label{dysoncircper2}
\end{equation}
with $(B_.^i)_{0\le i\le N-1}$ a family of independent Brownian motions.
\end{definition}
\begin{remark}
We insist upon the fact that since a priori the $\lambda_t^i$ does not satisfy the periodic equality modulo $2\pi$ we can not write as before the sum on any $N-1$ class of disjoints particles.
\end{remark}
Thanks to existence and uniqueness of the $N$ particles Dyson case of Theorem $\ref{th:explo}$, there exists a unique solution to the previous system given a periodic initial data. Moreover this solution is the same as the unique solution to the first definition. 
\begin{proposition}
\label{pro:equivalence}
Let $0\le\lambda_0^0<\lambda_0^1<...<\lambda_0^{N-1}<2\pi$ be $N$ initial particles. We define for $i\in \Z$ the $i^{th}$ initial particle $\lambda_0^i=\lambda_0^{i[N]}+2\left\lfloor i/N\right\rfloor\pi$. 
\newline
Then there exists a unique solution to $\eqref{dysoncircper2}$, $(\lambda_.^i)_{i\in \Z}$, defined for all time $t\ge 0$ which starts from $(\lambda_0^i)_{i\in\Z}$. 
\newline
Furthermore almost surely this solution satisfies: $$\forall i\in \Z,\,\forall t\ge 0,\, \lambda_t^i=\lambda_t^{i[N]}+2\left\lfloor \cfrac{i}{N}\right\rfloor\pi.$$
\newline
Hence, the unique solution to $\eqref{dysoncircper}$ starting from $(\lambda_0^i)_{i\in\Z}$ is the same unique solution to $\eqref{dysoncircper2}$ starting from  $(\lambda_0^i)_{i\in\Z}$ almost surely.
\end{proposition}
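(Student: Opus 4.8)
The plan is to reduce everything to the finite $N$-particle system \eqref{dysononcirc}, using two elementary facts: that \eqref{dysoncircper2} splits into independent blocks indexed by $k_{N,i}$, and that $\cotan$ is $\pi$-periodic.

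First I would observe that, for every $m\in\Z$, the equations of \eqref{dysoncircper2} for the indices $i\in\Z_{N,mN}=\{mN,\dots,mN+N-1\}$ form a \emph{closed} system, since the sum over $k\in\Z_{N,i}$, $k\ne i$, only involves particles of the same block. Setting $\nu_t^{m,l}:=\lambda_t^{mN+l}-2m\pi$ for $0\le l\le N-1$ and using that a common shift of two variables by $2m\pi$ leaves all pairwise differences unchanged, hence (by $\pi$-periodicity of $\cotan$, since $\cotan((x+2m\pi-y-2m\pi)/2)=\cotan((x-y)/2)$) leaves the drift unchanged, the vector $(\nu_t^{m,0},\dots,\nu_t^{m,N-1})$ solves exactly \eqref{dysononcirc}, driven by $B^0,\dots,B^{N-1}$ (because $(mN+l)[N]=l$), with initial condition $\nu_0^{m,l}=\lambda_0^{mN+l}-2m\pi=\lambda_0^l\in\R^N_>$, which does not depend on $m$.

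Then, by Theorem \ref{th:explodyson} (global existence and pathwise uniqueness of \eqref{dysononcirc} for data in $\R^N_>$), for each $m$ there is a unique such $(\nu_t^{m,l})_{t\ge0}$, it is defined for all $t\ge0$, and since neither the initial data nor the driving Brownian motions depend on $m$, pathwise uniqueness forces $\nu_t^{m,l}=\nu_t^{0,l}=:\lambda_t^l$ for all $m,l,t$ almost surely. Unwinding the substitution gives simultaneously the existence of a solution to \eqref{dysoncircper2} starting from $(\lambda_0^i)_{i\in\Z}$, its uniqueness (any solution restricts, block by block, to a solution of \eqref{dysononcirc}), its global-in-time definition, and the identity $\lambda_t^{mN+l}=\lambda_t^l+2m\pi$, i.e. $\lambda_t^i=\lambda_t^{i[N]}+2\lfloor i/N\rfloor\pi$ for all $i\in\Z$, $t\ge0$.

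Finally, to identify this solution with the one of \eqref{dysoncircper}, I would note that once the periodicity relation holds, for $0\le l\le N-1$ the sum in \eqref{dysoncircper2} over $\Z_{N,i}\setminus\{i\}$ coincides with $\frac1N\sum_{k=1}^{N-1}\cotan((\lambda_t^i-\lambda_t^{i+k})/2)$: by $\pi$-periodicity of $\cotan$ both depend only on the residues modulo $N$, and both $k\mapsto k[N]$ on $\Z_{N,i}\setminus\{i\}$ and $k\mapsto(i+k)[N]$ on $\{1,\dots,N-1\}$ enumerate $\Z/N\Z\setminus\{i[N]\}$. Hence the solution built above also solves \eqref{dysoncircper}, and by the uniqueness part of Proposition \ref{prop:firstdef} it is \emph{the} solution of \eqref{dysoncircper}. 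The only slightly delicate point is bookkeeping rather than analysis: keeping track of the reductions modulo $N$ together with the floor terms, and checking that the $2m\pi$ shifts enter $\cotan$ only through integer multiples of $\pi$ so that the drift is genuinely translation invariant; all the probabilistic content is already contained in Theorem \ref{th:explo}.
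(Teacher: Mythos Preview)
Your proof is correct and follows essentially the same strategy as the paper: decompose \eqref{dysoncircper2} into independent blocks of $N$ particles, shift each block by $2m\pi$ to obtain copies of \eqref{dysononcirc} with identical initial data and identical driving noise, and invoke pathwise uniqueness from Theorem~\ref{th:explo} to force the periodicity relation; the identification with \eqref{dysoncircper} then follows. Your write-up is a bit more streamlined in notation (the $\nu^{m,l}$ substitution) but the argument is the same.
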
 
\begin{proof}
For the existence, we can look individually at the packs of $N$ particles $\{0,...,N-1\}$, $\{N,...2N-1\}$, $\{-N,...-1\}$ ... since we chose that the interactions of the $i$ particles only depend on the particles of $\Z_{N,i}$. Hence, using the existence part of the system of $N$ particles we can individually justify the existence for all time of these systems of $N$ particles. 
The problem is that maybe a particle from a pack collides with another particle. However that is not the case. Indeed we have that almost surely: $$\forall i\in \Z,\,\forall t\ge 0,\, \lambda_t^i=\lambda_t^{i[N]}+2\left\lfloor \cfrac{i}{N}\right\rfloor\pi.$$
For instance, if we look at the pack $(\lambda_.^i)_{i\in \{N,...,2N-1\}}$ we notice that $(\lambda_.^i-2\pi)_{i\in \{N,...,2N-1\}}$ is of course solution to the system of $N$ particles but now starting from the exact same initial data as $(\lambda_.^i)_{i\in \{0,...,N-1\}}$. Hence by uniqueness of the trajectories of the system of $N$ particles previously studied, we deduce that almost surely for every $i\in\{0,...,N-1\}$, for every time $t\ge 0$ we have: $\lambda_t^{i+N}=\lambda_t^i+2\pi$. We can do this for every packs of $N$ particles and since there is only a countable number of packs of particles, we conclude that almost surely $$\forall i\in \Z,\,\forall t\ge 0,\, \lambda_t^i=\lambda_t^{i[N]}+2\left\lfloor \cfrac{i}{N}\right\rfloor\pi.$$
Now since we know that the trajectories of every packs of $N$ particles are in $\R^N_>$ for every time by the previous study we have that for every time $t\ge 0$: $$...<\lambda_t^{-1}=\lambda_t^{N-1}-2\pi<\lambda_t^0<\lambda_t^1<...<\lambda_t^{N-1}<\lambda_t^0+2\pi=\lambda_t^{N}<\lambda_t^{N+1}<...$$ and so the system of infinite particles exists for every time and there is no collision. 
\newline
For the uniqueness of a solution to $\eqref{dysoncircper2}$ starting from $(\lambda_0^i)_{i\in\Z}$ we can notice, by exactly the same argument as in the existence part, that a solution $(\lambda_.^i)_{i\in\Z}$ of $\eqref{dysoncircper2}$ which starts from an initial condition $(\lambda_0^i)_{i\in\Z}$ which satisfies an hypothesis of periodic equality modulo $2\pi$ as in the statement, shall satisfy that almost surely: $$\forall i\in \Z,\,\forall t\ge 0,\, \lambda_t^i=\lambda_t^{i[N]}+2\left\lfloor \cfrac{i}{N}\right\rfloor\pi.$$
Hence the uniqueness is clear since the uniqueness of the system of the $N$ particles $\{0,...,N-1\}$ is clear by the uniqueness of the trajectory of a solution to the system of $N$ particles previously studied.
\newline
Finally the fact that the unique solution to $\eqref{dysoncircper2}$ starting from $(\lambda_0^i)_{i\in\Z}$ satisfies that almost surely: $$\forall i\in \Z,\,\forall t\ge 0,\, \lambda_t^i=\lambda_t^{i[N]}+2\left\lfloor \cfrac{i}{N}\right\rfloor\pi,$$ proves that is also the unique solution to $\eqref{dysoncircper}$ which starts from $(\lambda_0^i)_{i\in\Z}$. The reverse is obvious.
\end{proof}

\subsection{Discrete comparison principle} We can state a discrete comparison principle for the $N$ periodic Dyson equation that shall have a key impact in the proof of the convergence of the system of particles as in \cite{bertucci2022spectral}.

\begin{theorem}
\label{thm:comparison}
Let $a$ be a real number. 
Let $a\le\lambda_0^0<\lambda_0^1<...<\lambda_0^{N-1}<a+2\pi$ be $N$ initial particles. We define for $i\in \Z$ the $i^{th}$ initial particle $\lambda_0^i=\lambda_0^{i[N]}+2\left\lfloor i/N\right\rfloor\pi$. 
\newline
Let $a\le\mu_0^0<\mu_0^1<...<\mu_0^{N-1}<a+2\pi$ be $N$ initial particles. We define for $i\in \Z$ the $i^{th}$ initial particle $\mu_0^i=\mu_0^{i[N]}+2\left\lfloor i/N\right\rfloor\pi$. 
\newline
Let $(\lambda_.^i)_{i\in\Z}$ (resp. $(\mu_.^i)_{i\in\Z}$) be the unique strong solution to $\eqref{dysoncircper}$ (or equivalently $\eqref{dysoncircper2})$ which starts from $(\lambda_0^i)_{i\in\Z}$ (resp. $(\mu_0^i)_{i\in\Z}$) with the same Brownian motions $(B_.^i)_{0\le i\le N-1}$.
\newline
Assume that for all $i\in \Z$ we have:  $\lambda_0^i\le \mu_0^i$. 
\newline
Then for all time $t\ge 0$, for all $i\in \Z$, we have: $\lambda_t^i\le \mu_t^i$.
\end{theorem}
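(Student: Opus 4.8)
The plan is to argue pathwise and to turn the comparison into an essentially deterministic statement, using that the two systems are driven by the \emph{same} Brownian motions. Fix a realization of $(B_.^i)_{0\le i\le N-1}$ and set $d_t^i:=\mu_t^i-\lambda_t^i$ for $i\in\Z$. By Proposition \ref{pro:equivalence} both trajectories are defined for all $t\ge 0$, never collide modulo $2\pi$, stay strictly ordered, and satisfy $\lambda_t^{i+N}=\lambda_t^i+2\pi$, $\mu_t^{i+N}=\mu_t^i+2\pi$; in particular $d_t^{i+N}=d_t^i$, so for each $t$ the map $i\mapsto d_t^i$ takes finitely many values. Subtracting the two copies of \eqref{dysoncircper}, the stochastic terms $\frac{2}{\sqrt N}\,\dd B_t^{i[N]}$ cancel and we are left with the purely integral equation
$$\frac{\dd}{\dd t}d_t^i=\frac{1}{N}\sum_{k=1}^{N-1}\Big(\cotan\big((\mu_t^i-\mu_t^{i+k})/2\big)-\cotan\big((\lambda_t^i-\lambda_t^{i+k})/2\big)\Big).$$
Since no two distinct particles of either system agree modulo $2\pi$, the right-hand side is continuous in $t$, so each $t\mapsto d_t^i$ is $C^1$ on $[0,+\infty)$.

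The heart of the proof is the behaviour of $d^i$ at an index realizing its minimum. Fix $t$ and choose $i_0$ with $d_t^{i_0}=\min_{i\in\Z}d_t^i=:m(t)$. For $1\le k\le N-1$, the strict ordering together with periodicity forces both gaps $g_k^\lambda:=\lambda_t^{i_0+k}-\lambda_t^{i_0}$ and $g_k^\mu:=\mu_t^{i_0+k}-\mu_t^{i_0}$ to lie in $(0,2\pi)$, while the minimality $d_t^{i_0+k}\ge d_t^{i_0}$ reads exactly as $g_k^\mu\ge g_k^\lambda$. Using that $\cotan$ is odd, the $k$-th summand above equals $\cotan(g_k^\lambda/2)-\cotan(g_k^\mu/2)$, which is $\ge 0$ because $x\mapsto\cotan(x/2)$ is decreasing on $(0,2\pi)$. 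Hence $\frac{\dd}{\dd t}d_t^{i_0}\ge 0$ at every time at which $i_0$ realizes the minimum. I expect this bookkeeping to be the only real difficulty: on the whole circle $\cotan$ is not monotone, but after the periodization the relevant gaps are all confined to the single interval $(0,2\pi)$ on which it is, with a globally consistent labelling — this is precisely what the reformulations \eqref{dysoncircper}--\eqref{dysoncircper2} and Proposition \ref{pro:equivalence} are for.

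To conclude, I would invoke the elementary fact that $m(t)=\min_{0\le i\le N-1}d_t^i$, being the minimum of finitely many $C^1$ functions, is locally Lipschitz, hence differentiable for a.e. $t$, and that at a point of differentiability $m'(t)$ coincides with the derivative of every branch $d^i$ realizing the minimum there. By the previous step $m'(t)\ge 0$ for a.e. $t$, so $m$ is non-decreasing; since $m(0)=\min_i(\mu_0^i-\lambda_0^i)\ge 0$ by hypothesis, we get $m(t)\ge 0$ for all $t\ge 0$, that is $\lambda_t^i\le\mu_t^i$ for all $i\in\Z$ and all $t\ge 0$. (Alternatively, one may run the usual contradiction argument at the first time the minimum would reach $0$; the estimate $\frac{\dd}{\dd t}d_t^{i_0}\ge 0$ at the minimizing index is exactly what makes it work, and the case where all $d^i$ vanish simultaneously is handled by pathwise uniqueness for \eqref{dysoncircper}.) This is the same scheme as for the real Dyson system in \cite{bertucci2022spectral}.
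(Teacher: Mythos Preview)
Your proof is correct and follows essentially the same approach as the paper's: both cancel the Brownian terms (same driving noise) to get a pathwise ODE for the differences, then exploit the monotonicity of $\cotan(\cdot/2)$ on $(0,2\pi)$ at the extremal index to show the extremal difference moves in the right direction. The only cosmetic difference is the closing step --- the paper uses a $\delta$-penalization $w_t^i=\lambda_t^i-\mu_t^i-\delta t$ together with a first-hitting-time contradiction and then lets $\varepsilon,\delta\to 0$, whereas you conclude directly from the a.e.\ differentiability of $m(t)=\min_i d_t^i$; both are standard ways to finish, and yours is slightly cleaner here since the limiting arguments are avoided.
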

The proof is similar to the proof in the real Dyson case \cite{anderson2010,bertucci2022spectral,sniady2002random}.

\begin{proof}
We know that almost surely the trajectories of the particles $(\lambda_.^i)_{i\in\Z}$ and $(\mu_.^i)_{i\in\Z}$ does not collide and so are well defined for every time. 
\newline
Let $\varepsilon >0$ and let us define the stopping time $$\tau_\varepsilon=\inf\{t,\exists i\in\Z, \lambda_t^i-\mu_t^i>\varepsilon\}.$$
We shall prove that for all $T>0$ we have $\mP(\tau_\varepsilon<T)=0$.
\newline
We define for every $t\ge 0$ and every $i\in\Z$ $$w_t^i=\lambda_t^i-\mu_t^i-\delta t$$ with $\delta=\frac{\varepsilon}{2T}$ (remark that by the periodic equality modulo $2\pi$ of the two systems of particles, there are actually only $N$ $w_t^i$). By using the fact that $(\lambda_.^i)_{i\in\Z}$ and $(\mu_.^i)_{i\in\Z}$ are solutions of $\eqref{dysoncircper}$, we have that: $$dw_t^i=\cfrac{1}{N}\sum_{k=1}^{N-1}\left(\cotan((\lambda_t^i-\lambda_t^{i+k})/2)-\cotan((\mu_t^i-\mu_t^{i+k})/2) \right)dt-\delta.$$
Let $\omega\in \Omega$ such that $\tau_\varepsilon(\omega)<T$, we can find an index $i_0\in\Z$ (which of course depends on $\omega$) and a times $\tau>0$ (which also depends on $\omega$ too) such that: 
\begin{equation}
\left\{ 
\begin{split}
&\forall i\in\Z, \,\forall t\in[0,\tau]\,\,w_t^i\le 0\\
&\exists i_0,\, w_\tau^{i_0}=0\\
&dw_\tau^{i_0}\ge 0,
\end{split}
\right.
\end{equation}
since for all $i\in \Z$ we have:  $\lambda_0^i\le \mu_0^i$ and the trajectories of $w^i$ are continuous.
\newline
If we evaluate the previous formula for $dw_t^i$ at $t=\tau$ and $i=i_0$, we get: 
$$dw_\tau^{i_0}=\cfrac{1}{N}\sum_{k=1}^{N-1} \left[\cotan\left(\cfrac{\lambda_t^{i_0}-\lambda_t^{i_0+k}}{2}\right)-\cotan\left(\cfrac{\mu_t^{i_0}-\mu_t^{i_0+k}}{2}\right)\right]dt-\delta.$$
Now for all $1\le k\le N-1$, let $a_k=\lambda_\tau^{i_0}-\lambda_\tau^{i_0+k}$ and $b_k=\mu_\tau^{i_0}-\mu_\tau^{i_0+k}$.
We remark that by definition for all, for all $1\le k\le N-1$ we have $-2\pi< a_k< 0$, $-2\pi< b_k< 0$ and $b_k-a_k=-\delta\tau-\mu_\tau^{i_0+k}+\lambda_\tau^{i_0+k}=w_\tau^{i_0+k}\le 0$. 
Since $\cotan(./2)$ is non increasing on $(-2\pi,0)$ we deduce that for all $1\le k\le N-1$ $\cotan(a_k/2)\le\cotan(b_k/2)$ and so we have that for all $1\le k\le N-1$: $$\cotan\left(\cfrac{\lambda_t^{i_0}-\lambda_t^{i_0+k}}{2}\right)-\cotan\left(\cfrac{\mu_t^{i_0}-\mu_t^{i_0+k}}{2}\right)\le 0 $$
So we have: $dw_\tau^{i_0}\le-\delta<0$ which is a contradiction.
\newline
Hence, we get that for all $\varepsilon>0$ and for all $T>0$, almost surely for all $t<T$, for all $i\in \Z$, $\lambda_t^i\le\mu_t^i+\varepsilon$. 
\newline
By using a countable sequences of $\varepsilon>0$ and $T>0$ we deduce that almost surely for all $\varepsilon>0$ for all $t\ge 0$, for all $i\in \Z$, $\lambda_t^i\le\mu_t^i+\varepsilon$.
\newline
Hence, by letting $\varepsilon\to 0$ we conclude that almost surely for all $t\ge 0$, for all $i\in \Z$, $\lambda_t^i\le\mu_t^i$.
\end{proof}
\begin{remark}
This result is also true for periodized solutions of \eqref{dysoncircleeq}. This comparison principle can be used to prove existence and uniqueness of solution to \eqref{dysoncircleeq} with the hypothesis $\beta_N\ge N^2/2$ starting from an initial data in $\R^N$ as in Proposition 4.3.5 in \cite{anderson2010}  for the real case.
\end{remark}

\subsection{Convergence of the system of particles}
Thanks to the discrete and the functional comparison principle, we can have a similar approach as in the real Dyson case \cite{bertucci2022spectral}.
\newline
In this Section we call $(\Omega, \mathcal F,\mP)$ the probability space on which all the random variables are defined. 
\newline
\tab Let us recall that for $\mu\in \mesT$, we let $F_\mu(\theta)=\mu([0,\theta])$ if $\theta\ge 0$ and $F_\mu(\theta)=-\mu((\theta,0) )$ if $\theta<0$. Formally $F_\mu(\theta)=\int_0^\theta \mu(d\theta')$ if for instance $\mu$ has a density with respect to Lebesgue measure. $F$ is non decreasing, right continuous and satisfies that $F_\mu(x+2\pi)=F_\mu(x)+1$ since $\mu$ is a probability measure on $\T$. 
\newline
For $(\lambda_1,...,\lambda_N)\in\T^N$ we introduce the empirical measure $\mu_N\in\mesT$ associated to $(\lambda_1,...,\lambda_N)$ by:  $$\mu_N=\cfrac{1}{N}\sum_{i=1}^N\delta_{\lambda_i}.$$
This definition shall be also used if we consider a $N$ periodic system of particles $(\lambda_i)_{i\in \Z}\in\T^{\Z}$ (i.e. for all $i\in\Z$, $\lambda_i=\lambda_{i[N]}$ modulo $2\pi$) by defining: $$\mu_N=\cfrac{1}{N}\sum_{i\in\Z/N\Z}\delta_{\lambda_i}.$$ 
\newline 
For $a\in\R$, 
let $a\le\lambda_0^0<\lambda_0^1<...<\lambda_0^{N-1}<a+2\pi$ be $N$ initial particles. We define for $i\in \Z$ the $i^{th}$ initial particle $\lambda_0^i=\lambda_0^{i[N]}+2\left\lfloor i/N\right\rfloor\pi$. We consider $(\lambda_t^{i})_{i\in\Z}$ the unique solution to $\eqref{dysoncircper}$ which starts from $(\lambda_0^i)_{i\in \Z}$. We define for all $t\ge 0$, for all $\theta\in\R$, $$F_N(t,\theta)=F_{\mu_N(t)}(\theta),$$ where $\mu_N(t)$ is the empirical measure associated to $(\lambda_t^i)_{i\in \Z}$.
\newline
We also define the upper semi continuous function: $$F^*(t,\theta)=\underset{N\to\infty,\,t_N\to t^-,\, \theta_N\to\theta}{\limsup}F_N(t_N,\theta_N).$$ Let us remark that $F^*$ is a random variable since the $F_N$ are.
\newline
Now we can state the main convergence result.
\begin{theorem}[Convergence of the system of particles]\label{th:conv}
Assume that the empirical measure $\mu_N^0$ of initial conditions defined by: $$\mu_N^0=\cfrac{1}{N}\sum_{i=0}^{N-1}\delta_{\lambda_0^i}$$ converges almost surely for the weak convergence toward a measure $\mu_0\in\mesT$.
\newline
Then almost surely, $F^*$ is the unique viscosity solution of $\eqref{Primitive}$ which satisfies $F(0,\theta)=F_{\mu_0}(\theta)$ almost everywhere.
\end{theorem}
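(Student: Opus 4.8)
\emph{Strategy.} The plan is to run the half-relaxed limits method, as in the real Dyson case. Since \eqref{Primitive} satisfies the comparison principle of Theorem \ref{thm:comparison principle} and uniqueness of bounded viscosity solutions with a prescribed non-decreasing usc bounded datum, it suffices to prove that, almost surely: (i) $F^*$ is a bounded viscosity subsolution of \eqref{eqPrimitive}; (ii) the companion relaxed lower limit $F_\flat$ of the family $(F_N)_N$ is a bounded viscosity supersolution of \eqref{eqPrimitive}; (iii) $F^*(0,\cdot)\le F_{\mu_0}^*$ and $(F_{\mu_0})_*\le F_\flat(0,\cdot)$ a.e. Granting this, applying Theorem \ref{thm:comparison principle} to the pair $(F^*,F_\flat)$ gives $F^*\le F_\flat$, while $F_\flat\le F^*$ always; hence $F:=F^*=F_\flat$ is continuous, equals its lower semicontinuous envelope, and is the unique viscosity solution of \eqref{Primitive} with $F(0,\cdot)=F_{\mu_0}$ a.e. I would first record that $F^*$ and $F_\flat$ lie in $\mathcal F_{t,2\pi}$ with period constant $a=1$, are bounded, and are non-decreasing in $\theta$, all inherited from $F_N=F_{\mu_N(\cdot)}$; recall that $F_N$ is well defined for every $t\ge0$ by Proposition \ref{prop:firstdef}.

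\emph{Initial layer.} For (iii), the a.s. weak convergence $\mu_N^0\to\mu_0$ gives $F_{\mu_N^0}(\theta)\to F_{\mu_0}(\theta)$ at every continuity point of the monotone function $F_{\mu_0}$, hence a.e.; I would then control $F_N(t_N,\theta_N)$ for $t_N\to0$ by $F_{\mu_N^0}$ up to errors vanishing with $t_N$, using the continuity in time of the trajectories and a localisation near $t=0$, and conclude the two relaxed inequalities.

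\emph{Subsolution property.} For (i), let $\phi\in C^{1,1}_{t,2\pi}$ and let $(t_0,\theta_0)\in(0,\infty)\times\R$ be a strict local maximum of $F^*-\phi$, normalised so that $(F^*-\phi)(t_0,\theta_0)=0$; rotating if needed, I may assume $\theta_0\notin2\pi\Z$, and, replacing $\phi$ by $\phi(t,\theta)+\varepsilon\theta$ (still in $C^{1,1}_{t,2\pi}$, with $A_0$ unchanged since $A_0$ annihilates affine functions) and letting $\varepsilon\to0$ at the end, I may assume $\partial_\theta\phi(t_0,\theta_0)>0$. By definition of the relaxed upper limit and a standard perturbation of $\phi$ using strictness, there are $N_k\to\infty$ and $(t_k,\theta_k)\to(t_0,\theta_0)$ with $(t_k,\theta_k)$ a local maximum of $F_{N_k}-\phi$ on a fixed ball $B((t_0,\theta_0),\delta)$ and $F_{N_k}(t_k,\theta_k)\to\phi(t_0,\theta_0)$. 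Since $\phi(t,\cdot)$ is increasing near $\theta_0$ while $F_{N_k}(t,\cdot)$ is a non-decreasing step function with jumps exactly at the particle positions, the spatial maximum is attained at a particle $\theta_k=\lambda_{t_k}^{i_k}$; by the discrete comparison principle (Theorem \ref{thm:comparison}) the particles stay ordered and collision-free on the arc around $\theta_0$ --- which carries a genuine order, being locally like an interval of $\R$ --- so the contact index $i_k$ is locally constant in $t$, and $F_{N_k}(t,\lambda_t^{i_k})$ is constant up to the vanishing contribution of particles crossing the reference point. Hence $t\mapsto\phi(t,\lambda_t^{i_k})$ has, up to $o(1)$, a local minimum at $t=t_k$.

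\emph{Passing to the limit.} I would then apply Itô's formula to $\phi(t,\lambda_t^{i_k})$ along \eqref{dysoncircper} over a window $[t_k,t_k+h_k]$ with $N_k^{-1}\ll h_k\to0$ (for instance $h_k=N_k^{-1/2}$): the martingale part is of order $N_k^{-1/2}h_k^{1/2}$, negligible compared with $h_k$ uniformly over the $N_k$ Brownian motions by Gaussian maximal inequalities, and the $\tfrac{2}{N_k}\partial_{\theta\theta}\phi$ Itô term is $O(N_k^{-1})$. The essential local minimum then forces, in the limit, $\partial_t\phi(t_0,\theta_0)+(\partial_\theta\phi(t_0,\theta_0))_+\,\ell\le0$, where $\ell$ is the limit of the particle drift $\tfrac1{N_k}\sum_{m=1}^{N_k-1}\cotan((\lambda_{t_k}^{i_k}-\lambda_{t_k}^{i_k+m})/2)$. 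Splitting this sum at scale $\delta$, I expect the near-diagonal part to be bounded above, in the limit, by $I_{1,\delta}[\phi(t_0,\cdot)](\theta_0)$ --- using $F_{N_k}\le\phi$ on $B_\delta$, the maximum-principle property of $A_0$ (equivalently of $I_{1,\delta}$), and the a priori local-density bound coming from the boundedness of $\partial_\theta\phi$ near $\theta_0$, exactly as in the proofs of the local reformulation and of Theorem \ref{thm:comparison principle} --- while the far part converges to $I_{2,\delta}[F^*(t_0,\cdot)](\theta_0)$ by dominated convergence and upper semicontinuity of $F^*$. This yields the local form of \eqref{subsol}, hence (i); (ii) follows by the symmetric argument (local minimum of $F_\flat-\phi$, contact particle just below the level, inequalities reversed). \emph{The hard part} is this last step: converting ``local minimum along a diffusion path'' into a pointwise drift inequality requires the balance $N_k^{-1}\ll h_k\ll 1$ and the a priori control of the local particle density, and it must be carried out through the \emph{local} reformulation of viscosity solutions (there is no global order on $\T$) while simultaneously passing to the limit in the nonlocal term $I_{2,\delta}$; a secondary point is the relaxed initial condition as $t\to0^+$.
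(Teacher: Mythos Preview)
Your overall architecture---half-relaxed limits, contact particle, It\^o on a short window, near/far splitting of the drift, then the comparison principle---is exactly the paper's, but two genuine pieces are missing or misstated.

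First, the signs are inconsistent. With the \emph{forward} window $[t_k,t_k+h_k]$ and a local minimum of $t\mapsto\phi(t,\lambda_t^{i_k})$ at $t_k$, It\^o yields $\int_{t_k}^{t_k+h_k}[\partial_t\phi+\partial_\theta\phi\cdot\text{drift}]\,dt+(\text{noise})\ge0$, hence $\partial_t\phi+(\partial_\theta\phi)_+\ell\ge0$ in the limit, not $\le0$. For the subsolution inequality you must integrate \emph{backward} (the paper takes $s_N\nearrow t_0$) and you need a \emph{lower} bound on the drift, $\ell\ge I_{1,\delta}[\phi]+I_{2,\delta}[F^*]$, not the upper bound you write; only then do the two inequalities chain together since $\partial_\theta\phi>0$.

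Second, and more substantively, your control of the near drift ``using $F_{N_k}\le\phi$ and the maximum-principle property of $I_{1,\delta}$'' does not apply as stated: the drift is a discrete sum $\tfrac1{N}\sum\cotan((\lambda^{i_k}-\lambda^j)/2)$ over particle positions, not $I_{1,\delta}$ of a smooth function. The paper's key device is to build, for each $N$, an \emph{auxiliary} periodic system $(\mu_t^{i,\sigma_N})_{t\ge s_N}$ whose initial data on the near arc are $\gamma_{s_N}^i:=(\phi(s_N,\cdot))^{-1}(i/N)$ and equal $\lambda_{s_N}^i$ outside; for this system the near drift at time $s_N$ is a genuine Riemann sum for $H_{1,\delta}[\partial_\theta\phi]$ and converges to $I_{1,\delta}[\phi]$ (up to a boundary term later removed via Arisawa's lemma), while the far drift involves only $\lambda$ and yields $I_{2,\delta}[F^*]$. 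The discrete comparison principle is then applied between $\mu^{\cdot,\sigma_N}$ and a copy $\lambda^{\cdot,\sigma_N}$ of the original system driven by Brownian motions permuted by $\sigma_N$, where $\sigma_N$ swaps $1$ and the random contact index $i_0(N)$. This permutation is what forces the stochastic integral in It\^o to be against the \emph{fixed} Brownian motion $B^1$ for every $N$, so that the martingale term can be made $o(t_0-s_N)$ almost surely by letting $s_N\to t_0$ slowly---your maximal-inequality route over $N$ Brownian motions is plausible, but since $t_k$ and $i_k$ are pathwise (not stopping time, not adapted) quantities, carrying it out almost surely is delicate and the paper's trick sidesteps this entirely.
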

As explained in introduction the proof uses the same approach as for the real Dyson case in \cite{bertucci2022spectral}. The main difference is that we have to use the local formulation of viscosity solutions because locally we can see an arc of the circle as an interval of $\R$. Thanks to that formulation we shall construct system of particles associated to test functions that are "below" the initial system of particles and then we shall use the discrete comparison principle. Moreover, we need to clarify some details of the proof of Theorem 3 of \cite{bertucci2022spectral} which were missing.

\begin{proof}
We follow the idea of \cite{bertucci2022spectral}. The main idea of the proof is given a test function $\phi$ such that $F^*-\phi$ has a maximum in a point as in the viscosity subsolution formulation in Definition \ref{def viscosity sol}, for all $N>0$ we shall construct a periodic system of $N$ particles associate to $\phi$. Formally, since $\phi$ is above $F^*$ this periodic system of $N$ particles will be ordered in a way to apply the discrete comparison principle with the original system of particles $(\lambda_t^i)_{1\le i\le N}$. This shall give an information about the evolution of the test function $\phi$ along this flow of particles. On the other hand, we can express the evolution of $\phi$ along this flow thanks to the Itô formula using the explicit dynamic of the particles. Taking the limit when $N$ goes to  infinity shall give the relation $\eqref{subsol}$.
\newline
Let $(s_N)_{N\in\N}$ be the set of positive rational numbers.
\subsubsection{Construction of adapted systems of particles associated to a test function and Itô's formula}
\label{subsubsection construction particles}
Fix $N\in\N$, $\delta>0$, $t_0\in\R^+$, $\theta_0\in\R$ and $\phi\in C^{1,1}_{t,2\pi}$. 
We consider $I_\delta:=(\theta_0-\delta,\theta_0+\delta)$ and an interval $I$ such that $I_\delta\subset \overset{\circ} I$. We suppose that $\phi(s_N,.)$ is strictly increasing on $I_\delta$ and we let $J_\delta(t):=\phi(t,I_{\delta})$
We change the numbering of the $\lambda_{s_N}$ with respect to the interval $I$ which means that if $I$ is $[a,a+2\pi)$ for an $a\in \R$, we write: $$a\le \lambda_{s_N}^0< \lambda_{s_N}^1<...<\lambda_{s_N}^{N-1}<a+2\pi,$$ the inequalities are strict since the particles does not collide for any $t>0$ almost surely.
For all $i$, $\lambda_{s_N}^i $ mod $2\pi$ is associated to a unique class $i/N$ mod $1$. 
\newline
For the $i$ mod $N$ such that $i/N$ mod 1 is in $J_\delta(s_N)$ mod 1, we consider $\gamma_{s_N}^i$ mod $2\pi$ which is equal to $(\phi(s_N))^{-1}(i/N)$ mod $2\pi$ (this definition makes sense since $\phi(s_N,\theta+2\pi)=\phi(s_N,\theta)+1$ for all $\theta$ and $\phi(s_N,.)$ is strictly increasing on $I_\delta$ by hypothesis.)
By abuse of notations we write $\gamma_{s_N}^i$ the unique real in $I$ which represents the previous class modulo $2\pi$. More exactly $\gamma_{s_N}^i$ is in $I_\delta$ by definition of $J_\delta$. 
\newline
We fix $\sigma_N\in \mathfrak{S}_N$ where $\mathfrak{S}_N$ is the group of permutations of cardinal $N!$. 
We define a new system of particles associated to $\sigma_N$. $(\mu^{i,\sigma_N}_.)_{0\le i\le N-1}$ by the initial data given by: 
\begin{equation}
\left\{
\begin{split}
\mu_{s_N}^{i,\sigma_N}&=\lambda_{s_N}^i \text{ if } i/N \text{ mod 1 isn't in } J_\delta(s_N) \text { mod 1}\\
\mu_{s_N}^{i,\sigma_N}&=\min(\gamma_{s_N}^i,\lambda_{s_N}^i) \text{  if } i/N \text{ mod 1 is in } J_\delta(s_N) \text { mod 1}
\end{split}
\right.
\end{equation}
As before, we define for all $i\in \Z$, $$\mu_{s_N}^{i,\sigma_N}=\mu_{s_N}^{i[N],\sigma_N}+2\left\lfloor \cfrac{i}{N}\right\rfloor\pi.$$
Then let us define $(\mu^{i,\sigma_N}_{t})_{i\in\Z,\, t\ge s_N}$ as the unique strong solution to: 
\begin{equation}
\left\{
\begin{split}
\forall i\in \Z,\,\, &d\mu_t^{i,\sigma_N}=\cfrac{1}{N}\dis\sum_{k=1}^{N-1}\cotan((\mu_t^{i,\sigma_N}-\mu_t^{i+k,\sigma_N})/2)\,dt+\cfrac{2}{\sqrt{N}}\,dB_t^{\sigma_N(i[N])},\\
\forall i\in \Z,\,\forall t\ge s_N,\, &\mu_t^{i,\sigma_N}=\mu_t^{i[N],\sigma_N}+2\left\lfloor \cfrac{i}{N}\right\rfloor\pi,
\end{split}
\right.
\end{equation}
which starts from $(\mu_{s_N}^{i,\sigma_N})_{i\in\Z}$, where the $N$ Brownian motions are the same Brownian motions for the evolution of the system of $N$ particles of the $(\lambda_.^i)_{i\in\Z}$.
Applying the Itô formula gives that almost surely, for all $\phi\in C^{1,1}_{t,2\pi}$, for all $N\in\N$, for all $\sigma_N\in \mathfrak{S}_N$, for all $i\in\Z$, for all $t_0\ge s_N$: 
\begin{equation}
\label{eq: ito formula pas bien}
\begin{split}
\phi(t_0,\mu_{t_0}^{i})-\phi(s_N,\mu_{s_N}^{i})&=\int_{s_N}^{t_0}\partial_\theta\phi(t',\mu_{t'}^{i})\cfrac{1}{N}\dis\sum_{k=1}^{N-1}\cotan((\mu_{t'}^{i}-\mu_{t'}^{i+k})/2)dt'
\\
&+\int_{s_N}^{t_0}\partial_t\phi(t',\mu_{t'}^{i})dt'\\
&+\int_{s_N}^{t_0}\cfrac{2}{N}\partial_{\theta,\theta}^2\phi(t',\mu_{t'}^{i})dt'\\
&+\cfrac{2}{\sqrt{N}}\int_{s_N}^{t_0}\partial_\theta\phi(t',\mu_{t'}^{i})dB_{t'}^{\sigma_N(i[N])}.
\end{split}
\end{equation}
We call $\Omega'\subset \Omega$ this set of full probability on which this formula holds.
\subsubsection{Set up}
Fix $\omega\in \Omega'$. We only prove that $F^*(\omega)$ is a subsolution since the proof that it is a supersolution can be done in a same way.
\newline
To lighten notations we will most of the time just write $F^*$ in what follows but we will insist when some quantities depend on $\omega$.
\newline
First, $F^*$ is upper semi continuous and is in $\mathcal F_{t,2\pi}$.
\newline
Let \begin{align*}
A(\omega)&:=\{(\phi,t_0,\theta_0,\delta)\in C^{1,1}_{t,2\pi}\times \R^{+}\times \R\times \R^+\,|\,\\
&(F^*(\omega)-\phi) (t_0,\theta_0)=0 \text{ and }(F^*(\omega)-\phi)(t,\theta)< 0 \text{ for any } (t,\theta)\in B((t_0,\theta_0),\delta)-(t_0,\theta_0)\}.
\end{align*}
From now on, fix $(\phi,t_0,\theta_0,\delta)\in A(\omega)$. Remark that although $A(\omega)$ depends on $\omega$, the quantities $\phi,t_0,\theta_0$ and $\delta$ does not depend on $\omega$ and so are deterministic.
We want to prove that: $$\partial_t \phi(t_0,\theta_0)+(\partial_\theta \phi(t_0,\theta_0))_+\left( I_{1,\delta}[\phi(t_0,.)](\theta_0)+I_{2,\delta}[F^*(t_0,.)](\theta_0)\right)\le 0.$$
In spite of $\phi$ we can consider a function $\phi_1$ such that $\phi_1$ is equal to $\phi$ in $B((t_0,\theta_0),\delta)$, is in $C^{1,1}_{t,2\pi}$, is non decreasing and satisfies that $\phi(t,\theta+2\pi)=\phi(t,\theta)+1$ for all $t>0$ and $\theta\in\R$. If we prove the last inequality with $\phi_1$ instead of $\phi$ it will imply the result for $\phi$ because $\phi$ and $\phi_1$ are equal around $(t_0,\theta_0)$. We shall now use $\phi_1$ in spite of $\phi$ but we still call it $\phi$.
\newline
We focus on the case $\partial_\theta\phi(t_0,\theta_0)>0$. The general case could be deduce from this case by classical viscosity methods that are detailed at the end of the proof of the convergence of the system of particles in the real Dyson case in \cite{bertucci2022spectral}.
\newline
We consider $I_\delta=(\theta_0-\delta,\theta_0+\delta)$. We can choose $\delta$ small enough such that $\phi(t,.)$ is strictly increasing on $I_\delta$ for $t$ near of $t_0$ since $\partial_\theta\phi(t_0,\theta_0)>0$. In what follows we choose such $\delta$. We write $J_\delta(t)=\phi(t,I_{\delta})$ which is an open interval of positive length that contains $\phi(t_0,\theta_0)$ again since $\partial_\theta\phi(t_0,\theta_0)>0$ for $t$ near $t_0$.
\newline
From now on, we consider an increasing subsequence of $(s_N)_{N\in\N}$ that goes to $t_0$ when $N$ goes to $+\infty$. To lighten notation we still call $s_N$ this subsequence.
\newline
We also consider as in Section \ref{subsubsection construction particles} an interval $I$ of length $2\pi$ such that $I_\delta\subset \overset\circ I$.
For every $N>0$ and $\sigma\in\mathfrak{S}_N$, we construct systems of particles $(\mu_t^{i,\sigma_N})_{t\ge s_N, i\in\Z}$ as in Section \ref{subsubsection construction particles}.

\subsubsection{Consequence of the discrete comparison principle}
By definition of $F^*$, for all $N$ we consider an index $i_0(N)$ (which depends on $\omega$) which satisfies: 
\begin{equation}
\left\{
\begin{split}
&\limsup_N \lambda_{s_N}^{i_0(N)}\le \theta_0\\
&\cfrac{i_0(N)}{N}\to F^*(t_0,\theta_0)=\phi(t_0,\theta_0).
\end{split}
\right.
\end{equation}
\newline
We consider $\sigma_N\in\mathfrak{S}_N$ as the permutation that exchanges $1$ and $i_0(N)$ (this permutation depends also on $\omega$.) We shall see in the passage to the limit in Section \ref{subsubsection passage limite} why this choice is important.
\newline
Since $I_\delta\subset \overset\circ I$ we have that for all $i\in\Z$, $\mu_{s_N}^{i,\sigma_N}\le \lambda_{s_N}^i$. We would like to apply the discrete comparison principle but we can not since the particles are not associated with the same Brownian motions due to the permutation $\sigma_N$ that exchanges the first Brownian motion and the $i_0(N)^{th}$ one.
\newline
To apply the discrete comparison principle we define an auxiliary periodic system of $N$ particles $(\lambda_{t}^{i,\sigma_N})_{t\ge s_N, \, i\in\Z}$: 
\begin{equation}
\left\{
\begin{split}
\forall i\in \Z,\,\,\lambda_{s_N}^{i,\sigma_N}=&\lambda_{s_N}^{i}\\
\forall i\in \Z,\,\, d\lambda_t^{i,\sigma_N}=&\cfrac{1}{N}\dis\sum_{k=1}^{N-1}\cotan((\lambda_t^{i,\sigma_N}-\lambda_t^{i+k,\sigma_N})/2)\,dt+\cfrac{2}{\sqrt{N}}\,dB_t^{\sigma_N(i[N])}.
\end{split}
\right.
\end{equation}
By the discrete comparison principle we deduce that for all $t\ge s_N$, for all $i\in\Z$, we have: $\mu_t^{i,\sigma_N}\le \lambda_t^{i,\sigma_N}$.
\newline
So for $N$ large enough we have: $$\phi(s_N,\mu_{s_N}^{i_0(N),\sigma_N})-\phi(t_0,\mu_{t_0}^{i_0(N),\sigma_N})\ge \cfrac{i_0(N)}{N}-\phi(t_0,\lambda_{t_0}^{i_0(N),\sigma_N}), $$ by definition of $\mu_{s_N}^{i}$ and since $\phi$ is non decreasing. By continuity of the trajectories of $(\lambda_.^{i,\sigma_N})_{i\in\Z}$ we get: $$\liminf_N\phi(s_N,\mu_{s_N}^{i_0(N),\sigma_N})-\phi(t_0,\mu_{t_0}^{i_0(N),\sigma_N})\ge F^*(t_0,\theta_0)-\phi(t_0,\theta_0)=0.$$ 
Up to changing the sequence $(s_N)_{N\in\N}$ by another increasing sequence of rational numbers (which depends on $\omega$) that converges to $t_0$ but slow enough, we obtain: 
\begin{equation}
\label{eq: cons disc comp principle}
\liminf_N\cfrac{\phi(s_N,\mu_{s_N}^{i_0(N),\sigma_N})-\phi(t_0,\mu_{t_0}^{i_0(N),\sigma_N})}{t_0-s_N}\ge 0.
\end{equation}

\subsubsection{Interaction between particles}
In this section only, we write $(\mu_{t}^i)_{t\ge s_N,\,i\in\Z}$ instead of $(\mu_{t}^{i,\sigma_N})_{t\ge s_N,\,i\in\Z}$ to lighten notations since the Brownian motions will not appear.
\newline
First we assume that $\mu_{s_N}^i=\gamma_{s_N}^i=(\phi(s_N))^{-1}(i/N)$ if $i/N$ mod 1 is in $J_\delta(s_N)$ mod 1. 
\newline
We separate the sum in two parts:  
\begin{align*}
\cfrac{1}{N}\dis\sum_{k=1}^{N-1}\cotan((\mu_{s_N}^{i_0(N)}-\mu_{s_N}^{i_0(N)+i})/2)&=\cfrac{1}{N}\dis\sum_{1\le i\le N, \, i/N \text{ mod 1 }\notin J_\delta \text{ mod 1}}\cotan((\mu_{s_N}^{i_0(N)}-\mu_{s_N}^{i_0(N)+i})/2)\\
&+\cfrac{1}{N}\dis\sum_{1\le i\le N, \, i/N \text{ mod 1 }\in J_\delta \text{ mod 1}}\cotan((\mu_{s_N}^{i_0(N)}-\mu_{s_N}^{i_0(N)+i})/2)\\
&=:U_N+V_N.
\end{align*}
First we deal with the second term: $V_N$ which is similar with the real Dyson case in \cite{bertucci2022spectral}.
\newline
Let us notice that for $N$ large enough there are particles in $J_\delta$ since the Lebesgue measure of $J_\delta$ is positive and so there will be some $i/N$ in $J_\delta$.
Using the fact that $\cotan$ is non increasing on $(0,\pi)$ we have: \begin{align*}
V_N&=\cfrac{1}{N}\left(\cotan((\mu_{s_N}^{i_0(N)}-\mu_{s_N}^{i_0(N)+1})/2)+\cotan((\mu_{s_N}^{i_0(N)}-\mu_{s_N}^{i_0(N)-1})/2)\right)\\
&+\cfrac{1}{N}\dis\sum_{i\notin \{i_0,i_0+1,i_0-1\}, \, i/N \text{ mod 1 }\in J_\delta \text{ mod 1}}\cotan((\mu_{s_N}^{i_0(N)}-\mu_t^{i_0(N)+i})/2)\\
&\ge \cfrac{1}{N}\left(\cotan((\mu_{s_N}^{i_0(N)}-\mu_{s_N}^{i_0(N)+1})/2)+\cotan((\mu_{s_N}^{i_0(N)}-\mu_{s_N}^{i_0(N)-1})/2)\right)\\
&+\int_{J_\delta-[i_0(N)-2,i_0(N)+1]}\cotan((\mu_{s_N}^{i_0(N)}-(\phi({s_N}))^{-1})(y)/2)dy\\
&\ge \cfrac{1}{N}\left(\cotan((\mu_{s_N}^{i_0(N)}-\mu_{s_N}^{i_0(N)+1})/2)+\cotan((\mu_{s_N}^{i_0(N)}-\mu_{s_N}^{i_0(N)-1})/2)\right)\\
&+\int_{I_\delta-[\mu^{i_0(N)-2},\mu^{i_0(N)+1}]}\cotan((\mu_{s_N}^{i_0(N)}-\theta)/2)\partial_\theta\phi({s_N},\theta)d\theta,
\end{align*}
where we used for the first inequality that: 
\begin{align*}
\cfrac{1}{N}&\sum_{i\notin \{i_0,i_0+1,i_0-1\}, \, i/N \text{ mod 1 }\in J_\delta \text{ mod 1}}\cotan((\mu_{s_N}^{i_0(N)}-\mu_{s_N}^{i_0(N)+i})/2)\\
&=\hspace{1cm}\sum_{i\notin \{i_0,i_0+1,i_0-1\}, \, i/N \text{ mod 1 }\in J_\delta \text{ mod 1}}\int_{(i-1)/N}^{i/N}\cotan((\mu_{s_N}^{i_0(N)}-\mu_{s_N}^{i_0(N)+i})/2)\\
&\hspace{2cm}\ge \int_{J_\delta-[i_0(N)-2,i_0(N)+1]}\cotan((\mu_{s_N}^{i_0(N)}-(\phi({s_N}))^{-1})(y)/2)dy,
\end{align*}
by monotonicity of $(\phi({s_N}))^{-1}$ in the space variable near $\theta_0$.
\newline
Then by regularity of $\phi$, we deduce that: $$\cfrac{1}{N}\left(\cotan((\mu_{s_N}^{i_0(N)}-\mu_{s_N}^{i_0(N)+1})/2)+\cotan((\mu_{s_N}^{i_0(N)}-\mu_{s_N}^{i_0(N)-1})/2)\right)$$ converges to 0 by a Taylor expansion of $\phi^{-1}$ near $\phi(t_0,\theta_0)$ when $N$ goes to $+\infty$. 
\newline
When $N$ goes to $+\infty$, we also have that:
\begin{align*}
\liminf_N &\int_{I_\delta-[\mu^{i_0(N)-2},\mu^{i_0(N)+1}]}\cotan((\mu_{s_N}^{i_0(N)}-\theta)/2)\partial_\theta\phi({s_N},\theta)d\theta\ge\\
& \lim_{\varepsilon\to 0}\int_{I_\delta-[\theta_0-\varepsilon,\theta_0+\varepsilon]}\cotan((\theta_0-\theta)/2)\partial_\theta\phi(t_0,\theta)d\theta:=H_{1,\delta}[\partial_\theta\phi(t_0,.)](\theta_0).
\end{align*}
Hence, we have that: 
\begin{equation}
\label{first interaction}
\liminf_N \cfrac{1}{N}\dis\sum_{1\le i\le N, \, i/N \text{ mod 1 }\in J_\delta \text{ mod 1}}\cotan((\mu_{s_N}^{i_0(N)}-\mu_{s_N}^{i_0(N)+i})/2)\ge H_{1,\delta}[\partial_\theta\phi(t_0,.)](\theta_0).
\end{equation}
Thus, integrating by part yields: $$H_{1,\delta}[\partial_\theta\phi(t_0,.)](\theta_0)=-\cotan(\delta/2)(\phi(t_0,\theta_0+\delta)+\phi(t_0,\theta_0-\delta))+I_{1,\delta}[\phi(t_0,.)](\theta_0).$$
For the first term $U_N$, we shall approximate $F_N(s_N,.)$ by a regular cumulative distribution function.
We fix $N>0$. For all $\varepsilon>0$, we can find a sequence of smooth, strictly non decreasing functions $F_\varepsilon$ such that $(F_\varepsilon)_{\varepsilon>0}$ is a non increasing sequence of functions that converges pointwise to $F_N(s_N,.)$ and such that $F_\varepsilon(\lambda_{s_N}^k-\varepsilon)=F(s_N,\lambda_{s_N}^k)$ for all $k\in\Z$. 
\newline
Since $F_\varepsilon\ge F$ and since $\cotan$ is non increasing on $(-\pi, 0)$ and on $(0,\pi)$, we have that: 
\begin{align*}
\cfrac{1}{N}\dis\sum_{i;\, i/N \text{ mod 1 }\notin J_\delta \text{ mod 1}}\cotan((&\mu_{s_N}^{i_0(N)}-\mu_{s_N}^{i_0(N)+i})/2)=\cfrac{1}{N}\dis\sum_{i;\, i/N \text{ mod 1 }\notin J_\delta \text{ mod 1}}\cotan((\mu_{s_N}^{i_0(N)}-\lambda_{s_N}^{i_0(N)+i})/2)\\
&\ge\cfrac{1}{N}\dis\sum_{1\le i\le N, \, i/N \text{ mod 1 }\notin J_\delta \text{ mod 1}}\cotan((\mu_{s_N}^{i_0(N)}-(\lambda_{s_N}^{i_0(N)+i}-\varepsilon))/2)\\
&\ge\int_{[0,1]-J_\delta}\cotan((\mu_{s_N}^{i_0(N)}-(F_\varepsilon)^{-1}(y)/2)dy\\
&\ge\int_{I-I_\delta}\cotan((\mu_{s_N}^{i_0(N)}-\theta)/2)\partial_\theta F_\varepsilon(\theta)d\theta
\end{align*}
Hence, for all $\varepsilon>0$: 
\begin{align*}
\cfrac{1}{N}\dis\sum_{i;\, i/N \text{ mod 1 }\notin J_\delta \text{ mod 1}}&\cotan((\mu_{s_N}^{i_0(N)}-\mu_{s_N}^{i_0(N)+i})/2)\ge\\
& \int_{I-I_\delta}\cotan((\mu_{s_N}^{i_0(N)}-\theta)/2)\partial_\theta F_\varepsilon(\theta)d\theta:=H_{2,\delta}(\partial_\theta F_\varepsilon)(\mu_{s_N}^{i_0(N)}).
\end{align*}
Thus integrating by part yields: $$
H_{2,\delta}(\partial_\theta F_\varepsilon)(\mu_{s_N}^{i_0(N)})=\cotan(\delta/2)(F_\varepsilon(\mu_{s_N}^{i_0(N)}+\delta)+F_\varepsilon(\mu_{s_N}^{i_0(N)}-\delta))+I_{2,\delta}( F_\varepsilon)(\mu_{s_N}^{i_0(N)}).$$
By monotone convergence theorem we have that for all $N$ large enough: 
\begin{align*}
\cfrac{1}{N}\dis&\sum_{i;\, i/N \text{ mod 1 }\notin J_\delta \text{ mod 1}}\cotan((\mu_{s_N}^{i_0(N)}-\mu_{s_N}^{i_0(N)+i})/2)\ge \\
&\cotan(\delta/2)(F_N(s_N,\mu_{s_N}^{i_0(N)}+\delta)+F_N(s_N,\mu_{s_N}^{i_0(N)}-\delta))+ I_{2,\delta}[F_N(s_N,.)](\mu_{s_N}^{i_0(N)}).
\end{align*}
So by definition of $F^*$ we deduce that: 
\begin{equation}
\label{second interaction}
\begin{split}
\limsup_N \cfrac{1}{N}\dis&\sum_{i;\, i/N \text{ mod 1 }\notin J_\delta \text{ mod 1}}\cotan((\mu_{s_N}^{i_0(N)}-\mu_{s_N}^{i_0(N)+i})/2)\ge\\
& \cotan(\delta/2)(F^*(t_0,\theta_0+\delta)+F^*(t_0,\theta_0-\delta))+I_{2,\delta}[F^*(t_0,.)](\theta_0).
\end{split}
\end{equation}
Hence, if we add \eqref{first interaction} and \eqref{second interaction} we get that: 
\begin{equation}
\begin{split}
\label{formula interaction particles}
\limsup_N \cfrac{1}{N}\dis\sum_{i=1}^{N-1}\cotan((\mu_{s_N}^{i_0(N)}-&\mu_{s_N}^{i_0(N)+i})/2)\ge \\
&\hspace{-4cm}-\cotan(\delta/2)(\phi(t_0,\theta_0+\delta)+\phi(t_0,\theta_0-\delta)-\\
&\hspace{-3cm} F^*(t_0,\theta_0+\delta)-F^*(t_0,\theta_0-\delta))+I_{1,\delta}[\phi(t_0,.)](\theta_0)+I_{2,\delta}[F^*(t_0,.)](\theta_0).
\end{split}
\end{equation}
Now let us explain why we can assume that $\mu_{s_N}^i=\gamma_{s_N}^i=(\phi(s_N))^{-1}(i/N)$ if $i/N$ mod 1 is in $J_\delta(s_N)$ mod 1. 
Since $(t_0,\theta_0)$ is a strict local maximum of $F^*-\phi$ on $B((t_0,\theta_0),\delta)$, for all $\varepsilon>0$ small enough there exists $\gamma>0$ such that: $$\phi(t-\varepsilon, \theta_0+\theta)\ge F^*(t-\varepsilon,\theta_0+\theta)+\gamma,\, \text{ for all } |\theta|\le\delta.$$
Since $(F_N)_N$ is an increasing function that converges toward $F^*$ we can replace $F^*$ with $F_N$ and $\gamma$ by $\gamma/2$ when $N$ is large enough. This implies that for $\delta$ small enough for $N$ large enough for all $i$ such that $i/N$ mod 1 is in $J_\delta(t_0-s_N)$ mod 1, $$(\phi(s_N))^{-1}(i/N)\le (F_N(s_N))^{-1}(i/N)=\lambda_{s_N}^i.$$

\subsubsection{Use of Itô's formula and passage to the limit}
\label{subsubsection passage limite}
Now we use Ito's Formula \eqref{eq: ito formula pas bien} to obtain: 
\begin{equation}
\label{eq: ito formula}
\begin{split}
\phi(t_0,\mu_{t_0}^{i_0(N)})-\phi(s_N,\mu_{s_N}^{i_0(N)})&=\int_{s_N}^{t_0}\partial_\theta\phi(t',\mu_{t'}^{i_0(N)})\cfrac{1}{N}\dis\sum_{k=1}^{N-1}\cotan((\mu_{t'}^{i_0(N)}-\mu_{t'}^{i+k})/2)dt'
\\
&+\int_{s_N}^{t_0}\partial_t\phi(t',\mu_{t'}^{i_0(N)})dt'\\
&+\int_{s_N}^{t_0}\cfrac{2}{N}\partial_{\theta,\theta}^2\phi(t',\mu_{t'}^{i_0(N)})dt'\\
&+\cfrac{2}{\sqrt{N}}\int_{s_N}^{t_0}\partial_\theta\phi(t',\mu_{t'}^{i_0(N)})dB_{t'}^{\sigma_N(i_0[N])}.
\end{split}
\end{equation}
By the choice of $\sigma_N$, $B_{t}^{\sigma_N(i_0[N])}=B_{t}^{1}$ for all $N$.
Now we take $s_N$ that converges sufficiently slow enough toward $t_0$ to be sure that almost surely the two last terms are $o(t_0-s_N)$.
\newline
Let us mention that if we did not use $\sigma_N$ so that the particle $i_0(N)$ is associated to the first Brownian for every $N$, it would not have been clear that the term with the stochastic integral goes to 0 since the index of the Brownian changes also.
\newline
We divide by $t_0-s_N$ the previous expression and we take the $\limsup_N$ to have that: 
\begin{equation}
\begin{split}
\partial_t \phi(t_0,\theta_0)+&(\partial_\theta \phi(t_0,\theta_0))_+
( I_{1,\delta}[\phi(t_0,.)](\theta_0)+I_{2,\delta}[F^*(t_0,.)](\theta_0)-\\&\cotan\left(\cfrac{\delta}{2}\right)(\phi(t_0,\theta_0+\delta)+\phi(t_0,\theta_0-\delta)-F^*(t_0,\theta_0+\delta)-F^*(t_0,\theta_0-\delta)))\le 0
\end{split}
\label{non fini}
\end{equation}
 thanks to the inequality \eqref{eq: cons disc comp principle} and \eqref{formula interaction particles}. 

\subsubsection{Conclusion of the proof}
We nearly have the result we wanted.
We shall use an approximation of $\phi$ to conclude. 
Indeed thanks to Arisawa's lemma (\cite{arisawa2008}, Lemma 2.1) we can find a sequence of smooth function $(\phi_k)_{k\in\N}$ such that for all $k$ we have $\phi_k(t_0,\theta_0)=\phi(t_0,\theta_0)$, $\partial_t\phi_k(t_0,\theta_0)=\partial_t\phi(t_0,\theta_0)$ and $\partial_\theta\phi_k(t_0,\theta_0)=\partial_\theta\phi(t_0,\theta_0)$, for all $(t,\theta)\in B((t_0,\theta_0),\delta)$, we have $F^*(t,\theta)\le \phi_k(t,\theta)\le\phi(t,\theta)$ and $\phi_k(t_0,.)$ is monotone and decreased to $F^*(t_0,.)$. 
\newline
Hence we can apply the inequality $\eqref{non fini}$ to $\phi_k$ instead of $\phi$. 
We get that for all $k\in \N$:
\begin{equation}
\begin{split}
&\partial_t \phi_k(t_0,\theta_0)+(\partial_\theta \phi_k(t_0,\theta_0))_+( I_{1,\delta}[\phi_k(t_0,.)](\theta_0)+I_{2,\delta}[F^*(t_0,.)](\theta_0)-\\&\cotan\left(\cfrac{\delta}{2}\right)(\phi_k(t_0,\theta_0+\delta)+\phi_k(t_0,\theta_0-\delta)-F^*(t_0,\theta_0+\delta)-F^*(t_0,\theta_0-\delta)))\le 0.
\end{split}
\end{equation}
So by the construction of the $\phi_k$ we have that for all $k\in\N$: 
\begin{equation}
\begin{split}
\partial_t \phi(t_0,\theta_0&)+(\partial_\theta \phi(t_0,\theta_0))_+( I_{1,\delta}[\phi_k(t_0,.)](\theta_0)+I_{2,\delta}[F^*(t_0,.)](\theta_0)-\\&\cotan\left(\cfrac{\delta}{2}\right)(\phi_k(t_0,\theta_0+\delta)+\phi_k(t_0,\theta_0-\delta)-F^*(t_0,\theta_0+\delta)-F^*(t_0,\theta_0-\delta)))\le 0.\end{split}
\end{equation}
Then we notice that $\phi_k(t_0,.)-\phi(t_0,.)$ has a maximum in $\theta_0$ and so by the maximum principle for $I_{1,\delta}$ we deduce that: $$ I_{1,\delta}[\phi(t_0,.)](\theta_0)\le I_{1,\delta}[\phi_k(t_0,.)](\theta_0).$$
So thanks to the previous inequality we get that for all $k\in\N$:
\begin{equation}
\begin{split}
\partial_t \phi(t_0,\theta_0&)+(\partial_\theta \phi(t_0,\theta_0))_+ (I_{1,\delta}[\phi(t_0,.)](\theta_0)+I_{2,\delta}[F^*(t_0,.)](\theta_0)-\\&\cotan\left(\cfrac{\delta}{2}\right)(\phi_k(t_0,\theta_0+\delta)+\phi_k(t_0,\theta_0-\delta)-F^*(t_0,\theta_0+\delta)-F^*(t_0,\theta_0-\delta)))\le 0
\end{split}
\end{equation} 
Now we pass to the limit in $k$ and use the fact that $(\phi_k(t_0,.)$ converges pointwise to $F^*(t_0,.)$ to have: 
\begin{equation}
\partial_t \phi(t_0,\theta_0)+(\partial_\theta \phi(t_0,\theta_0))_+\left( I_{1,\delta}[\phi(t_0,.)](\theta_0)+I_{2,\delta}[F^*(t_0,.)](\theta_0)\right)\le 0.
\end{equation} 
Hence $F^*$ is a viscosity subsolution.
\end{proof}

\section{Properties of the Dyson equation on the circle}
Let us recall that the Dyson equation on the circle \eqref{dyson} is: 
\begin{align*}
 \partial_t \mu+\partial_\theta(\mu H[\mu])=0\,\,  \text{in } (0,\infty)\times \T,
\end{align*}
where $H[\mu]=P.V.(\cotan(./2))\ast\mu$ is the Hilbert transform of the measure $\mu$.
We can rewrite this equation:
\begin{equation}
\partial_t \mu+\partial_x \mu H[\mu]+\mu A_0[\mu]=0\,\,  \text{in } (0,\infty)\times \T,
\label{dysonextend}
\end{equation}
where $$A_0[\mu](x)=\frac{1}{2} P.V. \displaystyle\int_{-\pi}^{\pi} \cfrac{\mu(x)-\mu(y)}{\sin^2((x-y)/2)}dy=2P.V. \displaystyle\int_\R \cfrac{\mu(x)-\mu(y)}{(x-y)^2}dy,$$ is called the half Laplacian.
\newline
\tab From now on, we say that $\mu$ is a viscosity solution to \eqref{dyson} if its cumulative distribution function is a viscosity solution to the primitive Dyson equation \eqref{Primitive}.
For $\mu$ a solution to \eqref{dyson}, let us denote $m(t)$ and $M(t)$ as the minimum and the maximal values of $\mu(t,.)$ when they exist: 
$$m(t)=\min_{x\in\T} \mu(t,x)\,\,\text{and}\,\, M(t)=\max_{x\in \T}\mu(t,x).$$
Note that if $\mu$ is a solution to $\eqref{dyson}$ we have a conservation of the mass: $$\int_\T \mu(t,x)dx=\int_\T\mu(0,x)dx,$$ since $$\cfrac{d}{dt}\int_\T \mu(t,x)dx=\int_\T \cfrac{d}{dt} (\mu(t,x))dx=\int_\T\partial_x(\mu(t,.)H[\mu(t,.)](x))dx=0.$$
We shall look at solution $\mu$ such that for all time $t\ge 0$, $\mu(t,.)\in\mesT$.
In the following, we say that a solution to \eqref{dyson} $\mu$ is smooth if for all $t\ge 0$, $\mu(t,.)$ has a density $\mu$ with respect to Lebesgue measure and if $\mu$ is smooth on $\R^+\times\T$.

\subsection{Monotone principle} Given a solution $\mu$ of $\eqref{dyson}$, $m$ and $M$ are monotonic. This result is usual when we deal with equations that involved an operator like $A_0$ that satisfies a maximum principle \cite{cordoba2004maximum,kiselev2022,de2025clogged}.  
\begin{proposition}
\label{prop: monotonie}
Let $\mu$ be a smooth solution to $\eqref{dyson}$. Then $t\mapsto m(t)$ is non decreasing and $t\mapsto M(t)$ is non increasing.
\end{proposition}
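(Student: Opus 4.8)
The plan is to argue by contradiction, using the equation in the form \eqref{dysonextend}, the maximum principle for $A_0$ recalled above (so that $A_0\ge 0$ at an interior maximum and $A_0\le 0$ at an interior minimum), and the positivity $\mu(t,\cdot)\ge 0$. Suppose $m$ is not non-decreasing, i.e.\ there are $0\le t_0<t_1$ with $m(t_1)<m(t_0)$. Fix $\eta>0$ small enough that $m(t_1)+\eta t_1<m(t_0)+\eta t_0$, and set $v(t,x):=\mu(t,x)+\eta t$ on the compact cylinder $[t_0,t_1]\times\T$; let $(t^*,x^*)$ be a point where $v$ attains its minimum there. Since $\min_x v(t_1,x)=m(t_1)+\eta t_1<m(t_0)+\eta t_0=\min_x v(t_0,x)$, this minimum is not attained on $\{t_0\}\times\T$, hence $t^*\in(t_0,t_1]$.

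At $(t^*,x^*)$ I would then read off three elementary facts. First, $x\mapsto\mu(t^*,x)$ has an interior minimum at $x^*$, so $\partial_x\mu(t^*,x^*)=0$. Second, $A_0[\mu(t^*,\cdot)](x^*)=\frac12\,\mathrm{P.V.}\int_{-\pi}^{\pi}\frac{\mu(t^*,x^*)-\mu(t^*,y)}{\sin^2((x^*-y)/2)}\,dy\le 0$, because the numerator is everywhere $\le 0$ (and there is no principal-value subtlety left since $\partial_x\mu(t^*,x^*)=0$ keeps the integrand bounded near $y=x^*$). Third, $t\mapsto v(t,x^*)$ has a minimum over $[t_0,t_1]$ at $t^*>t_0$, so $\partial_t v(t^*,x^*)\le 0$, that is $\partial_t\mu(t^*,x^*)\le-\eta<0$. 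But evaluating \eqref{dysonextend} at $(t^*,x^*)$ and using the first fact gives $\partial_t\mu(t^*,x^*)=-\mu(t^*,x^*)\,A_0[\mu(t^*,\cdot)](x^*)$, which is $\ge 0$ by the second fact and $\mu(t^*,x^*)\ge 0$; this contradicts the third fact. Hence $m$ is non-decreasing. The claim for $M$ follows by the mirror-image argument: if $M(t_1)>M(t_0)$ for some $t_1>t_0$, take $v(t,x)=\mu(t,x)-\eta t$ with $\eta>0$ small, let $(t^*,x^*)$ be a maximum point of $v$ on $[t_0,t_1]\times\T$ (again $t^*\in(t_0,t_1]$), and use $\partial_x\mu(t^*,x^*)=0$, $A_0[\mu(t^*,\cdot)](x^*)\ge 0$ and $\partial_t\mu(t^*,x^*)\ge\eta>0$ against the identity $\partial_t\mu(t^*,x^*)=-\mu(t^*,x^*)A_0[\mu(t^*,\cdot)](x^*)\le 0$.

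The only point requiring care is that one cannot naively differentiate $t\mapsto m(t)$ or $t\mapsto M(t)$, since the location of the extremum in $x$ depends on $t$ and is in general neither unique nor continuous. Perturbing by $\pm\eta t$ and passing to an extremum of $v$ on the compact cylinder is precisely the device that circumvents this: at such an extremum the one-sided derivative inequality in $t$ and the sign of $A_0$ in $x$ hold at the same point. Alternatively one could note that $m$ and $M$ are locally Lipschitz (because $\mu$ is smooth and $\T$ is compact), apply Rademacher's theorem, and use the envelope identity $m'(t)=\partial_t\mu(t,x_t)=-\mu(t,x_t)A_0[\mu(t,\cdot)](x_t)\ge 0$ at a.e.\ $t$ with $x_t$ a minimizer; but the contradiction argument is cleaner and self-contained. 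All remaining ingredients — the sign of $A_0$ at an interior extremum and $\mu\ge 0$ — are immediate from what precedes.
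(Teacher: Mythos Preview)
Your argument is correct and rests on the same two ingredients as the paper's proof: at a spatial extremum the transport term $\partial_x\mu\,H[\mu]$ vanishes, and the sign of $A_0[\mu]$ at that point is fixed by the maximum principle, so \eqref{dysonextend} forces the right sign on $\partial_t\mu$ there.

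The paper proceeds more briskly: it picks $x(t)$ with $M(t)=\mu(t,x(t))$, writes directly $M'(t)\le -M(t)A_0[\mu(t)](x(t))\le 0$, and concludes. This is morally the envelope argument you sketch as your alternative, but the paper does not pause to justify the differentiability of $M$ or the validity of $M'(t)=\partial_t\mu(t,x(t))$ when the maximizer is non-unique or discontinuous in $t$. Your contradiction argument with the $\pm\eta t$ penalization is precisely the standard device that makes this step airtight, and your closing paragraph correctly identifies why it is needed. So your proof is a more carefully justified version of the same idea; the paper's version is shorter but leaves the regularity of $t\mapsto M(t)$ implicit.
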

\begin{proof}
For all $t>0$, let $x(t)\in\T$ such that $M(t) =\mu(t,x(t))$. We evaluate $\eqref{dysonextend}$ in $(t,x(t))$ and we get that: 
\begin{equation}
\label{ineqmax1}
M'(t)\le-M(t)A_0[\mu(t)](x(t)).
\end{equation}
By the maximum principle of $A_0$ we get that $M'(t)\le 0$ for all $t$. Hence, $M$ is non increasing. By the same argument $m$ is non decreasing.
\end{proof}

\subsection{$L^\infty$ regularization}
\label{se:regula}
We show a $L^\infty$ regularization of the Dyson flow. The approach is similar with the real Dyson case \cite{bertucci2024spectral}. We first state an a priori estimate for a smooth solution to $\eqref{dyson}$. Then, by a regularization argument we prove that this estimate is still true for a viscosity solution to $\eqref{Primitive}$.
\begin{proposition}
\label{prop: estimée infini}
Suppose that $\mu$ is a smooth solution to $\eqref{dyson}$. Then, for all $t>0$:  
$$||\mu(t,.)||_\infty\le \cfrac{1}{2\sqrt{1-\exp(-t)}}.$$
\end{proposition}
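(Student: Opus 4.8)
The plan is to transfer the maximum-principle argument used for the real Dyson case in \cite{bertucci2024spectral}: run a Gronwall estimate on $M(t):=\max_{x\in\T}\mu(t,x)=\|\mu(t,\cdot)\|_\infty$ (the last identity because $\mu(t,\cdot)\in\mesT$ is a nonnegative density), the only genuinely new ingredient being a sharp quantitative lower bound for the half-Laplacian at the maximum which turns the mass constraint $\int_\T\mu(t,\cdot)=1$ together with the compactness of $\T$ into a quadratic gain in $M$. First I would record that, $\mu$ being smooth, $\partial_t\mu$ is bounded on $[0,T]\times\T$ for each $T$, so $M$ is locally Lipschitz and hence differentiable for a.e.\ $t$; at such a $t$, if $x(t)$ is any global maximizer of $\mu(t,\cdot)$, dividing $M(t+h)\ge\mu(t+h,x(t))$ by $h$ and letting $h\to0^{\pm}$ gives $M'(t)=\partial_t\mu(t,x(t))$. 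Since $\T$ has no boundary, $\partial_x\mu(t,x(t))=0$, so evaluating the rewritten equation \eqref{dysonextend} at $(t,x(t))$ yields
$$M'(t)=\partial_t\mu(t,x(t))=-\,M(t)\,A_0[\mu(t,\cdot)](x(t))\qquad\text{for a.e.\ }t>0.$$

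The heart of the matter is the estimate
$$A_0[\mu(t,\cdot)](x(t))\ \ge\ 2M(t)^2-\tfrac12 .$$
To obtain it, write $A_0[\mu(t,\cdot)](x(t))=\tfrac12\int_{-\pi}^{\pi}\dfrac{M(t)-\mu(t,x(t)-y)}{\sin^2(y/2)}\,dy$, an integral with nonnegative integrand because $x(t)$ is a global maximizer (in particular it is absolutely convergent, the numerator vanishing to second order at $y=0$ since $\partial_x\mu(t,x(t))=0$). For a parameter $a\in(0,\pi)$ I would discard the nonnegative contribution of $\{|y|\le a\}$ and, on $\{a<|y|<\pi\}$, use $\int_{a<|y|<\pi}\frac{dy}{\sin^2(y/2)}=4\cotan(a/2)$ while estimating $\int_{a<|y|<\pi}\frac{\mu(t,x(t)-y)}{\sin^2(y/2)}\,dy\le\frac{1}{\sin^2(a/2)}\int_\T\mu(t,\cdot)=\frac{1}{\sin^2(a/2)}$, the last bound using that $y\mapsto\sin^2(y/2)$ increases on $[0,\pi]$ and that $\mu(t,\cdot)$ has mass $1$. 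This gives $A_0[\mu(t,\cdot)](x(t))\ge 2M(t)\cotan(a/2)-\frac{1}{2\sin^2(a/2)}$, and the choice $\cotan(a/2)=2M(t)$ (so $\frac{1}{\sin^2(a/2)}=1+4M(t)^2$) turns the right-hand side into exactly $2M(t)^2-\tfrac12$.

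To conclude, combining the two displays gives $M'(t)\le-2M(t)^3+\tfrac12 M(t)$ for a.e.\ $t$. Since the maximum of a probability density on $\T$ is at least its average, $M(t)\ge\frac1{2\pi}>0$, so $u:=M^{-2}$ is locally Lipschitz with $u'=-2M^{-3}M'\ge 4-u$ a.e.; therefore $\frac{d}{dt}\big(e^tu(t)\big)\ge 4e^t$ a.e.\ and, integrating from $0$, $e^tu(t)\ge u(0)+4(e^t-1)\ge 4(e^t-1)$, i.e.\ $M(t)^2=u(t)^{-1}\le\frac1{4(1-e^{-t})}$, which is precisely $\|\mu(t,\cdot)\|_\infty=M(t)\le\frac1{2\sqrt{1-e^{-t}}}$.

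The main obstacle is obtaining this lower bound for $A_0$ at the maximum with the right constant: one must localize the singular integral at the correct scale (here $a\sim 1/M$, encoded by $\cotan(a/2)=2M$) so that the finite-mass constraint produces the quadratic term $2M^2$ rather than something weaker — e.g.\ the crude bound $\tfrac1{\sin^2(y/2)}\ge 1$ only yields an estimate linear in $M$ and the much weaker decay $M(t)=O(t^{-1})$. The remaining points, namely the differentiability bookkeeping for $M$ and the absolute convergence of $A_0[\mu(t,\cdot)](x(t))$, are routine.
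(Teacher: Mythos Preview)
Your argument is correct and follows the paper's proof essentially line for line: the same localization of the singular integral at scale $\cotan(a/2)=2M(t)$ to produce $A_0[\mu(t,\cdot)](x(t))\ge 2M(t)^2-\tfrac12$, and then a Gronwall-type estimate on $M$. The only difference is in the final ODE step, where the paper integrates $M'\le-2M(M-\tfrac12)(M+\tfrac12)$ by partial fractions with a separate treatment of the regime $M\le\tfrac12$, whereas your Bernoulli substitution $u=M^{-2}$ linearizes the inequality and avoids the case split; this is a tidy improvement but not a different route.
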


\begin{proof}
For all $t>0$, let $x(t)\in\T$ such that $M(t) =\mu(t,x(t))$. We evaluate $\eqref{dysonextend}$ in $(t,x(t))$ and we get that: 
\begin{equation}
\label{ineqmax}
M'(t)\le-M(t)A_0[\mu(t)](x(t)).
\end{equation}
To prove Proposition \ref{prop: estimée infini} we shall obtain a more precise lower bound for the term $A_0[\mu(t)](x(t))$ than just its sign as for the comparison principle. For $\pi>\delta>0$ we have: 
\begin{align*}
A_0[\mu(t)](x(t))\ge& \int_{|x(t)-y|\ge \delta}\cfrac{M(t)-\mu(t,y)}{2\sin^2((x(t)-y)/2))} \,dy \\
\ge& M(t)\int_{|x(t)-y|\ge\delta}\cfrac{1}{2\sin^2((x(t)-y)/2)} \,dy-\int_{|x(t)-y|\ge\delta}\cfrac{\mu(t,y)}{2\sin^2(\delta/2)} dy\\
&\ge 2M(t)\cotan\left(\cfrac{\delta}{2}\right)-\cfrac{1}{2\sin^2(\delta/2)}. 
\end{align*}  
We optimize this inequality in $\delta$. We find that the optimal bound is obtained for $\delta$ such that $$2M(t)=\cotan\left(\cfrac{\delta}{2}\right).$$ We notice that for the choice of this delta we have that: $$\cfrac{\cos(\delta/2)}{\sin(\delta/2)}=2M(t)$$ which yields: $$4M(t)^2=\cfrac{1-\sin^2(\delta/2)}{\sin^2(\delta/2)}\,\,,\,\, \cfrac{1}{\sin^2(\delta/2)}=4M(t)^2+1.$$
Hence we deduce that a lower bound for $A_0[\mu(t)](x(t))$ is: $$ A_0[\mu(t)](x(t))\ge 2M(t)^2-\cfrac{1}{2}.$$
Hence we have that: $$M'(t)\le -2M(t)\left(M(t)^2-\cfrac{1}{4}\right)\le -2M(t)\left(M(t)-\cfrac{1}{2}\right)\left(M(t)+\cfrac{1}{2}\right).$$
Let $t^*=\inf\left\{t;\, M(t)\le \cfrac{1}{2}\right\}$ which is possibly infinite. For $t>t^*$, by Proposition $\ref{prop: monotonie}$ we deduce: $$M(t)\le \cfrac{1}{2}\le \cfrac{1}{2\sqrt{1-\exp(-t)}}.$$
So, it remains to prove the result for $t\le t^*$. Let $t<t^*$, we have that: 
 $$\cfrac{M'(t)}{M(t)\left(M(t)-\cfrac{1}{2}\right)\left(M(t)+\cfrac{1}{2}\right)}\le -2.$$
Thus, integrating this relation between $0$ and $t$ yields: $$-4\ln\left(\cfrac{M(t)}{M(0)}\right)+2 \ln\left(\cfrac{M(t)-\cfrac{1}{2}}{M(0)-\cfrac{1}{2}}\right)+2\ln\left(\cfrac{M(t)+\cfrac{1}{2}}{M(0)+\cfrac{1}{2}}\right)\le-2t.$$
Taking the exponential we get: 
$$\left(1-\cfrac{1}{4M(t)^2}\right)^2=\cfrac{\left(M(t)^2-\cfrac{1}{4}\right)^2}{M(t)^4}\le \exp(-2t)\cfrac{\left(M(0)^2-\cfrac{1}{4}\right)^2}{M(0)^4}\le \exp(-2t).$$
We conclude that: $$M(t)\le\cfrac{1}{2\sqrt{1-\exp(-t)}}.$$
Finally if $t^*>0$ we have that for all $t<t^*$: $$M(t^*)\le M(t)\le\cfrac{1}{2\sqrt{1-\exp(-t)}},$$ by taking the limit with $t$ converges to $t^*$ we get that: $$M(t^*)\le\cfrac{1}{2\sqrt{1-\exp(-t^*)}}.$$
\end{proof}

We shall prove a regularization lemma to justify that this estimate is actually true for non smooth solution  to \eqref{dyson}.
\begin{lemma}
\label{lemma: Regularization}
Let $\varepsilon>0$ and $\mu_0\in C^\infty(\T)$ be a probability density. Then there exists a (unique) smooth solution to:
\begin{equation}
\label{equationsmooth}
\partial_t \mu-\varepsilon\partial_{xx} \mu +\partial_x(\mu H[\mu])=0\, \in (0,\infty)\times \T,
\end{equation}
which satisfies the initial condition $\mu(0,.)=\mu_0$.
\end{lemma}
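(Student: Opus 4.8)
The plan is to treat \eqref{equationsmooth} as a semilinear parabolic equation on $\T$ and run the classical programme: local existence by a fixed point using the smoothing of the heat semigroup, global continuation from a priori bounds, and uniqueness by an energy estimate. Since $H$ and $\partial_x$ are commuting Fourier multipliers and $A_0[G]=H[\partial_x G]$, I would first rewrite \eqref{equationsmooth} as
\[
\partial_t\mu=\varepsilon\partial_{xx}\mu-(\partial_x\mu)H[\mu]-\mu A_0[\mu],
\]
so that the nonlinearity is of order one while the leading operator $\varepsilon\partial_{xx}$ is the regularizing one.

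For local existence, fix $s>3/2$ and let $S_\varepsilon(t)$ be the semigroup generated by $\varepsilon\partial_{xx}$ on $\T$. I would look for a mild solution
\[
\mu(t)=S_\varepsilon(t)\mu_0-\int_0^t S_\varepsilon(t-\tau)\,\partial_x\!\bigl(\mu H[\mu]\bigr)(\tau)\,d\tau
\]
in $X_T=C([0,T];H^s(\T))$. The ingredients are: $H$ is a Fourier multiplier of modulus $\le 1$, hence bounded on every $H^s(\T)$; $H^s(\T)$ is a Banach algebra for $s>1/2$, so $\|\mu H[\mu]\|_{H^s}\le C\|\mu\|_{H^s}^2$; and the parabolic gain $\|S_\varepsilon(t-\tau)\partial_x f\|_{H^s}\le C(\varepsilon(t-\tau))^{-1/2}\|f\|_{H^s}$, whose time singularity is integrable. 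Together these make the right-hand side a contraction on a small ball of $X_T$ once $T=T(\|\mu_0\|_{H^s},\varepsilon)$ is small, giving a unique local mild solution; a standard parabolic bootstrap (each $S_\varepsilon(t-\tau)$ gains up to one derivative, integrably in $\tau$, and $\mu_0$ is smooth) upgrades it to $\mu\in C^\infty([0,T]\times\T)$.

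Next I would establish the a priori estimates needed to continue globally. Integrating \eqref{equationsmooth} over $\T$ gives $\tfrac{d}{dt}\int_\T\mu=0$, hence $\int_\T\mu(t,\cdot)=1$. Viewing \eqref{equationsmooth} as a linear parabolic equation $\partial_t\mu-\varepsilon\partial_{xx}\mu+H[\mu]\,\partial_x\mu+A_0[\mu]\,\mu=0$ with continuous coefficients (bounded on any fixed interval by the previous step), the weak maximum principle yields $\mu\ge 0$, so $\|\mu(t,\cdot)\|_{L^2}^2\le\|\mu(t,\cdot)\|_{L^\infty}$. Evaluating \eqref{equationsmooth} at a spatial maximum of $\mu(t,\cdot)$ exactly as in the proof of Proposition \ref{prop: estimée infini} — where the extra term $\varepsilon\partial_{xx}\mu\le 0$ only helps — gives $\|\mu(t,\cdot)\|_{L^\infty}\le\tfrac{1}{2\sqrt{1-e^{-t}}}$, in particular a bound uniform in $t$. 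Then I would close energy estimates in $H^k$ by induction on $k$: after Leibniz, the order-one nonlinear contributions are absorbed by the dissipation $-\varepsilon\|\partial_x^{k+1}\mu\|_{L^2}^2$ together with the boundedness of $H$ and $A_0$ on Sobolev scales, Gagliardo--Nirenberg interpolation and Young's inequality, leaving $\tfrac{d}{dt}\|\mu\|_{H^k}^2\le C(\varepsilon,\|\mu\|_{H^{k-1}})\,\|\mu\|_{H^k}^2$; Gronwall gives finite bounds on every Sobolev norm on finite intervals, so no norm blows up and the solution extends to $[0,\infty)$. Uniqueness is the easy part: if $\mu,\nu$ solve \eqref{equationsmooth} with the same data, $w=\mu-\nu$ satisfies $\partial_t w-\varepsilon\partial_{xx}w+\partial_x(wH[\mu]+\nu H[w])=0$; testing against $w$, using $\|H[w]\|_{L^2}\le\|w\|_{L^2}$ and Gagliardo--Nirenberg together with Young to absorb $\varepsilon\|\partial_x w\|_{L^2}^2$, one gets $\tfrac{d}{dt}\|w\|_{L^2}^2\le C\|w\|_{L^2}^2$ and concludes $w\equiv 0$ from $w(0)=0$.

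The main obstacle will be that the nonlinearity $\partial_x(\mu H[\mu])=(\partial_x\mu)H[\mu]+\mu A_0[\mu]$ is nonlocal and of first order — exactly one derivative below the regularizing term $\varepsilon\partial_{xx}$ — so both the fixed point and the $H^k$ continuation estimates must pair the parabolic gain (resp. the dissipation $\varepsilon\|\partial_x^{k+1}\mu\|_{L^2}^2$) against product and commutator estimates for $A_0$ and $H$ with some care. Apart from that the argument is routine parabolic theory, and the $L^\infty$ a priori bound that rules out blow-up is already furnished by the proof of Proposition \ref{prop: estimée infini}.
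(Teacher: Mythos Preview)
Your approach is correct and essentially the same as the paper's: both treat \eqref{equationsmooth} as a semilinear parabolic equation, obtain the $L^\infty$ a priori bound from the maximum-principle argument of Proposition \ref{prop: estimée infini} (noting the viscous term $\varepsilon\partial_{xx}\mu$ only helps), and then close higher-order Sobolev energy estimates by bootstrapping, with existence and uniqueness following from standard parabolic theory. The only cosmetic difference is emphasis: you spell out the Duhamel/contraction argument for local existence in $C([0,T];H^s)$ while the paper declares that step ``classical'' and instead writes out the $L^2_t(H^1_x)$ energy estimate in detail (using a $C^\alpha$ bound on $H[\mu]$ rather than the $H^s$-algebra property), then bootstraps via the equation for $\partial_x\mu$.
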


\begin{proof}
Fix a time horizon $T>0$. We start by assuming that a smooth solution exists and we shall obtain an a priori estimate. In what follows $C$ will denote a constant depending only on $\varepsilon$, $m_0$ and $T$.  First, since the term in $\varepsilon$ does not perturb the proof of the $L^\infty$ a priori estimate above, for all $t\ge 0$ $||\mu(t,.)||_\infty \le ||\mu(0.)||_\infty.$  It implies that for all $p\in[1,+\infty]$, $\mu\in L^\infty_t(L^p_x)$. Multiplying \eqref{equationsmooth} by $\mu$ and integrating over space and time we get: 
$$\int_\T\mu(t,.)^2 +\varepsilon \int_0^t \int_\T(\partial_x\mu)^2ds=\int_0^t\int_\T (\partial_x \mu) \mu H[\mu]+\int_\T\mu(0,.)^2.$$
Using Cauchy–Schwarz inequality, the fact that $\mu\in L_t^\infty(L^2_x)$ and the isometry property of the Hilbert transform, we deduce that $\mu H[\mu]\in L^\infty_t(L^1_x)$. Moreover let us notice that for all $0<\alpha<1$ there exists a constant $C_\alpha>0$ such that for all $\mu\in C^\alpha$, $H[\mu]\le C_\alpha ||\mu||_{C^\alpha}$. If $\alpha<1/2$, by using Sobolev's embedding we can bound the previous inequality by $||\mu||_{H^1}$. 
We can bound: 
\begin{align*}
\int_0^t\int_\T (\partial_x \mu) \mu H[\mu]&\le \cfrac{\varepsilon}{2}\int_0^t\int_\T(\partial_x \mu)^2+\cfrac{1}{2\varepsilon}\int_0^t\int_\T (\mu H[\mu])^2\\
&\le \cfrac{\varepsilon}{2}\int_0^t\int_\T(\partial_x \mu)^2+\cfrac{C}{2\varepsilon}\int_0^t||\mu||_{H_x^1}\\
&\le \cfrac{\varepsilon}{2}\int_0^t\int_\T(\partial_x \mu)^2+\cfrac{C\sqrt{T}}{2\varepsilon}\sqrt{\int_0^t \int_\T (\partial_x\mu)^2}+C,
\end{align*}
where we used the Cauchy–Schwarz inequality for the last bound.
So there exists $C>0$ such that:  $$\int_\T\mu(t,.)^2+\cfrac{\varepsilon}{2} \int_0^t \int_\T(\partial_x\mu)^2ds-\cfrac{C}{2\varepsilon}\sqrt{\int_0^t \int_\T (\partial_x\mu)^2}\le C.$$
From this bound we deduce that $\mu\in L^2_t(H^1_x)$.
\newline
Further regularity can be obtained by looking at the equation satisfied by $w=\partial_x\mu$.
Multiplying this relation by $w$ and integrating yields
$$ \int_\T w(t,.)^2 +\varepsilon \int_0^t\int_\T(\partial_x w)^2=-\int_0^t\int_\T w^2H[w]-\int_0^t\int_\T\partial_x w[wH[\mu]-\mu H[w]]+\int_\T w(0,.)^2.$$
Arguing as in the previous case to bound, we deduce that $w\in L^\infty_t(L^2)$ and $w\in L^2_t(H^1_x)$. Further regularity can then be obtained in a similar fashion by boot strapping techniques.
\newline
Once regularity is obtained, the existence of a solution to \eqref{equationsmooth} with this regularity is
classical and we do not detail it here. Furthermore, in this situation, the uniqueness of such
a solution is immediate. 
\end{proof}
Let us notice that for all $\varepsilon>0$, the unique solution to \eqref{equationsmooth} satisfies the same a priori estimate obtained in Proposition \ref{prop: estimée infini}.
\newline
\tab Once such a priori estimates which are independent of $\varepsilon>0$ and of the initial data have been obtained, it is standard to pass to the the limit and propagate it to the unique viscosity solution to the limiting equation when $\varepsilon$ goes to 0 (see for instance \cite{crandall1992,bertucci2024spectral,biler2008nonlinear}).

\begin{proposition}
\label{Bound Linfini}
Let $\mu_0\in\mesT$ and $\mu$ be the unique solution to $\eqref{dyson}$ with initial data $\mu_0$. Then for all $t>0$, $\mu(t)$ has a density with respect to the Lebesgue measure bounded by: $$||\mu(t,.)||_\infty\le\cfrac{1}{2\sqrt{1-\exp(-t)}}.$$
\end{proposition}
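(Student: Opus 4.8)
The plan is a vanishing--viscosity argument together with the comparison principle of Theorem~\ref{thm:comparison principle}, after reducing the $L^\infty$ bound on $\mu$ to a Lipschitz bound on its cumulative distribution function. Set $C(t):=\tfrac{1}{2\sqrt{1-\exp(-t)}}$. For $\nu\in\mesT$, the function $F_\nu$ defined before \eqref{primi1} is non-decreasing, satisfies $F_\nu(x+2\pi)=F_\nu(x)+1$, is bounded on $[0,2\pi]$, and $\nu$ has a density bounded by $C(t)$ \emph{if and only if} $F_\nu(\theta)-F_\nu(\theta')=\nu([\theta',\theta])\le C(t)(\theta-\theta')$ for all $\theta'\le\theta$, i.e.\ $F_\nu$ is $C(t)$-Lipschitz. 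Hence it suffices to prove that the bounded viscosity solution $F$ of \eqref{Primitive} with initial datum $F_{\mu_0}$ (unique, by the uniqueness theorem) has $F(t,\cdot)$ $C(t)$-Lipschitz for every $t>0$, since then $\mu(t)=\partial_\theta F(t,\cdot)$ has the required density bound.

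Mollify on $\T$ to get smooth probability densities $\mu_0^n\to\mu_0$ weakly. Fix $n$; for $\varepsilon>0$, Lemma~\ref{lemma: Regularization} provides the smooth solution $\mu^{n,\varepsilon}$ of \eqref{equationsmooth} with datum $\mu_0^n$. A maximum-principle argument (at a first zero of $\mu^{n,\varepsilon}$ the transport term vanishes and only $\varepsilon\partial_{xx}$ acts) gives $\mu^{n,\varepsilon}\ge0$, so $\mu^{n,\varepsilon}(t,\cdot)\in\mesT$ by conservation of mass, and by the remark following Lemma~\ref{lemma: Regularization}, $\|\mu^{n,\varepsilon}(t,\cdot)\|_\infty\le C(t)$ uniformly in $n,\varepsilon$. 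A space-primitive $F^{n,\varepsilon}$ of $\mu^{n,\varepsilon}$, normalized in time, is smooth, lies in $C^{1,1}_{t,2\pi}$ with period constant $1$, is bounded, non-decreasing, has $F^{n,\varepsilon}(t,\cdot)$ $C(t)$-Lipschitz, and solves pointwise the viscous primitive equation $\partial_t F^{n,\varepsilon}-\varepsilon\partial_{xx}F^{n,\varepsilon}+(\partial_x F^{n,\varepsilon})A_0[F^{n,\varepsilon}]=0$. Being smooth it is a viscosity solution of this equation; its upper and lower half-relaxed limits as $\varepsilon\to0$ are a viscosity sub-, resp.\ supersolution of \eqref{eqPrimitive} (the extra term $-\varepsilon\partial_{xx}$ only reinforces parabolicity and vanishes in the limit --- standard stability under vanishing viscosity, cf.\ \cite{crandall1992,barles2008}), both equal to $F_{\mu_0^n}$ at $t=0$. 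By Theorem~\ref{thm:comparison principle} they coincide, so $F^{n,\varepsilon}\to F^n$ locally uniformly, where $F^n$ is the unique bounded viscosity solution of \eqref{Primitive} with datum $F_{\mu_0^n}$; passing the bound to the limit, $F^n(t,\cdot)$ is $C(t)$-Lipschitz for all $t>0$.

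It remains to let $n\to\infty$. Weak convergence $\mu_0^n\to\mu_0$ gives, for each $\eta>0$, an $N$ with $F_{\mu_0^n}(x)\le F_{\mu_0}(x+\eta)+\eta$ and $F_{\mu_0}(x)\le F_{\mu_0^n}(x+\eta)+\eta$ for all $x$ when $n\ge N$ (a Lévy-type inequality on $\T$). Applying Theorem~\ref{thm:comparison principle} to the pairs $\bigl(F^n,\ (t,x)\mapsto F(t,x+\eta)+\eta\bigr)$ and $\bigl(F,\ (t,x)\mapsto F^n(t,x+\eta)+\eta\bigr)$, where $F$ is the viscosity solution with datum $F_{\mu_0}$ (whose existence likewise follows by taking half-relaxed limits of the $F^n$), yields $F^n(t,x)\le F(t,x+\eta)+\eta$ and $F(t,x)\le F^n(t,x+\eta)+\eta$ for all $t\ge0,\ x\in\R,\ n\ge N$. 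Letting $n\to\infty$, then $\eta\to0$, gives $F^n(t,\cdot)\to F(t,\cdot)$ at every continuity point; as the $F^n(t,\cdot)$ are uniformly $C(t)$-Lipschitz, the convergence is locally uniform and $F(t,\cdot)$ is $C(t)$-Lipschitz. By the first paragraph, $\mu(t)$ has a density bounded by $C(t)=\tfrac{1}{2\sqrt{1-\exp(-t)}}$.

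The main obstacle is the $\varepsilon\to0$ step: one must run the half-relaxed limit method carefully to identify the vanishing-viscosity limit as precisely the viscosity solution of \eqref{Primitive}, checking in particular that Theorem~\ref{thm:comparison principle} applies up to $t=0$ with the regular data $F_{\mu_0^n}$ and that the nonlocal operator $A_0$ together with the diffusion $\varepsilon\partial_{xx}$ is compatible with the stability of sub/supersolutions --- which is where the local reformulation with $I_{1,\delta}$, $I_{2,\delta}$ (and the proposition preceding Theorem~\ref{thm:comparison principle}) enters. The reduction in the first paragraph, the positivity and mass conservation of $\mu^{n,\varepsilon}$, and the $n\to\infty$ passage are routine.
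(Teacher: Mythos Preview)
Your proposal is correct and follows essentially the same approach as the paper: the paper itself does not give a detailed proof of this proposition but simply remarks that, once the a priori estimate of Proposition~\ref{prop: estimée infini} is known to hold uniformly in $\varepsilon$ for the regularized equation of Lemma~\ref{lemma: Regularization}, ``it is standard to pass to the limit and propagate it to the unique viscosity solution to the limiting equation when $\varepsilon$ goes to $0$'', citing \cite{crandall1992,bertucci2024spectral,biler2008nonlinear}. Your write-up spells out precisely this vanishing-viscosity/half-relaxed-limit argument (together with the reduction to a Lipschitz bound on the primitive and the additional mollification of the initial data), and you correctly identify the $\varepsilon\to0$ stability step as the one place requiring care.
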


We directly obtain a regularization of the $L^p$ norm of a solution to the Dyson equation.
\begin{corollary}
\label{coro:L^p reg}
Let $\mu_0\in\mesT$ and $\mu$ be the unique solution to $\eqref{dyson}$ with initial data $\mu_0$. Then for all $t>0$, $\mu(t)$ has a density with respect to the Lebesgue measure which is in $L^p(\T)$: $$||\mu(t,.)||_{L^p(\T)}\le\cfrac{\pi}{\sqrt{1-\exp(-t)}}.$$
\end{corollary}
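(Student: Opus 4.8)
The plan is to obtain the $L^p$ estimate as an elementary interpolation between the $L^1$ bound, which is free because $\mu(t,\cdot)$ is a probability density, and the $L^\infty$ bound already established in Proposition \ref{Bound Linfini}.

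First I would recall that, by Proposition \ref{Bound Linfini}, for every $t>0$ the measure $\mu(t)$ is absolutely continuous with respect to the Lebesgue measure and its density satisfies $\|\mu(t,\cdot)\|_\infty\le \frac{1}{2\sqrt{1-\exp(-t)}}$; since moreover $\mu(t)\in\mesT$, we have $\mu(t,\cdot)\ge 0$ and $\|\mu(t,\cdot)\|_{L^1(\T)}=\int_\T\mu(t,x)\,dx=1$. In particular $\mu(t,\cdot)$ is bounded on the compact set $\T$, hence it lies in $L^p(\T)$ for every $p\in[1,+\infty]$, which already gives the membership part of the statement.

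Next, for $p\in[1,+\infty)$ I would write
$$\int_\T\mu(t,x)^p\,dx\le \|\mu(t,\cdot)\|_\infty^{\,p-1}\int_\T\mu(t,x)\,dx=\|\mu(t,\cdot)\|_\infty^{\,p-1},$$
so that $\|\mu(t,\cdot)\|_{L^p(\T)}\le \|\mu(t,\cdot)\|_\infty^{(p-1)/p}\le \max\bigl(1,\|\mu(t,\cdot)\|_\infty\bigr)$, the last inequality because the exponent $(p-1)/p$ belongs to $[0,1)$. Finally I would conclude by observing that both $1$ and $\|\mu(t,\cdot)\|_\infty\le\frac{1}{2\sqrt{1-\exp(-t)}}$ are bounded above by $\frac{\pi}{\sqrt{1-\exp(-t)}}$ — for the first because $1-\exp(-t)\le 1$ and $\pi\ge 1$, for the second because $\tfrac12\le\pi$ — which yields $\|\mu(t,\cdot)\|_{L^p(\T)}\le \frac{\pi}{\sqrt{1-\exp(-t)}}$; the case $p=+\infty$ is contained in Proposition \ref{Bound Linfini} directly.

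There is essentially no real obstacle here: the statement is a direct consequence of the $L^\infty$ regularization of Proposition \ref{Bound Linfini}. The only points requiring a little care are the trivial interpolation inequality above and the (deliberately crude) comparison of the two constants, for which any bound of the stated form would do.
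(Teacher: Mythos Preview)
Your argument is correct and, like the paper, treats the corollary as an immediate consequence of the $L^\infty$ bound of Proposition~\ref{Bound Linfini}; the paper gives no detailed proof beyond the remark that the $L^p$ regularization follows directly. The only minor difference is that the specific constant $\pi/\sqrt{1-\exp(-t)}$ is obtained more transparently via the cruder inequality $\|\mu(t,\cdot)\|_{L^p(\T)}\le (2\pi)^{1/p}\|\mu(t,\cdot)\|_\infty\le 2\pi\,\|\mu(t,\cdot)\|_\infty$, which combined with Proposition~\ref{Bound Linfini} gives exactly the stated bound, whereas your $L^1$--$L^\infty$ interpolation yields the sharper intermediate estimate $\|\mu(t,\cdot)\|_\infty^{(p-1)/p}$ before you deliberately weaken it.
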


\subsection{Decay of $L^p$ norms} We prove the following statement. 
\begin{proposition}
\label{prop: Lp decay}
For $\mu(0)\in\mesT$, the unique solution to \eqref{dyson} with initial condition $\mu(0)$ is such that for any $0\le t\le s$, $1\le p\le +\infty$ $$||\mu(t,.)||_p\ge ||\mu(s,.)||_p.$$
\end{proposition}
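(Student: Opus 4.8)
The plan is to prove the estimate first for smooth solutions by an energy identity, and then to propagate it to an arbitrary $\mu_0\in\mesT$ through the vanishing--viscosity and mollification procedure already used for Proposition~\ref{Bound Linfini}. One first reduces to the range $1<p<\infty$: for $p=1$ the statement is the conservation of mass recalled above, and for $p=\infty$ it follows either directly from Proposition~\ref{prop: monotonie} or, once the finite exponents are treated, by letting $p\to\infty$ and using $\|g\|_{L^\infty(\T)}=\lim_{p\to\infty}\|g\|_{L^p(\T)}$ (which applies at a time $t>0$ since $\mu(t,\cdot)\in L^\infty(\T)$ by Proposition~\ref{Bound Linfini}, the case of an unbounded $\mu(0,\cdot)$ being trivial). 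For a smooth solution $\mu$ of \eqref{dyson} and $1<p<\infty$, I would use \eqref{dysonextend} together with $\partial_x H[\mu]=A_0[\mu]$ to write
\begin{align*}
\frac{d}{dt}\int_\T \mu(t,x)^p\,dx
&=-p\int_\T \mu^{p-1}\,\partial_x\!\bigl(\mu H[\mu]\bigr)\,dx\\
&=-\int_\T \partial_x(\mu^p)\,H[\mu]\,dx-p\int_\T \mu^p\,A_0[\mu]\,dx\\
&=-(p-1)\int_\T \mu^p\,A_0[\mu]\,dx,
\end{align*}
the integration by parts being licit because $\mu^p\in C^1(\T)$ for $p\ge1$. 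The heart of the argument is then the sign of the last integral: using the symmetric, non‑negative kernel appearing in the definition of $A_0$ and symmetrising in the two variables,
\begin{equation*}
\int_\T \mu^p\,A_0[\mu]\,dx=\frac14\int_\T\!\int_\T\frac{\bigl(\mu(x)^p-\mu(y)^p\bigr)\bigl(\mu(x)-\mu(y)\bigr)}{\sin^2((x-y)/2)}\,dx\,dy\ge 0,
\end{equation*}
since $s\mapsto s^p$ is non‑decreasing on $[0,\infty)$; the double integral is absolutely convergent because its numerator vanishes to second order on the diagonal $x=y$. Hence $t\mapsto\|\mu(t,\cdot)\|_p^p$ is non‑increasing, which is the claim for smooth solutions.

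For a general $\mu_0\in\mesT$ and $0<t\le s$ (the case $t=0$ is analogous and trivial when $\|\mu(0,\cdot)\|_p=+\infty$), I would note that $\mu|_{[t,\infty)}$ is the Dyson flow issued from $\mu(t,\cdot)\in L^\infty(\T)$, mollify this initial datum as $\rho_n=\mu(t,\cdot)\ast\psi_n$, so that $\|\rho_n\|_{L^p}\le\|\mu(t,\cdot)\|_{L^p}$ by Young's inequality, and solve the viscous equation \eqref{equationsmooth} with datum $\rho_n$ (Lemma~\ref{lemma: Regularization}) to obtain smooth approximations $\mu_{n,\varepsilon}$. Carrying out the computation above on \eqref{equationsmooth} produces one extra term, $-\varepsilon\,p(p-1)\int_\T\mu_{n,\varepsilon}^{p-2}(\partial_x\mu_{n,\varepsilon})^2\le 0$, so $u\mapsto\|\mu_{n,\varepsilon}(u,\cdot)\|_p^p$ is still non‑increasing on $[t,\infty)$, whence $\|\mu_{n,\varepsilon}(s,\cdot)\|_{L^p}\le\|\rho_n\|_{L^p}\le\|\mu(t,\cdot)\|_{L^p}$ for all $n,\varepsilon$. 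Letting $\varepsilon\to0$ and then $n\to\infty$, the associated primitives converge uniformly — by the comparison principle of Theorem~\ref{thm:comparison principle}, the invariance of \eqref{Primitive} under the addition of constants, and $\|F_{\rho_n}-F_{\mu(t,\cdot)}\|_\infty\to0$, exactly as in the limiting step for Proposition~\ref{Bound Linfini} — so $\mu_{n,\varepsilon}(s,\cdot)\to\mu(s,\cdot)$ in the sense of distributions; being uniformly bounded in $L^p(\T)$ this convergence is weak in $L^p(\T)$. Two applications of the weak lower semicontinuity of the $L^p$ norm then give $\|\mu(s,\cdot)\|_{L^p}\le\liminf\|\mu_{n,\varepsilon}(s,\cdot)\|_{L^p}\le\|\mu(t,\cdot)\|_{L^p}$, which is the assertion; the endpoint $p=\infty$ is recovered by letting $p\to\infty$ as explained.

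The part I would treat most carefully is this final passage to the limit. From the uniform convergence of the primitives one only obtains \emph{weak} $L^p$ convergence of the densities, never strong convergence, so one should not expect $\|\mu_{n,\varepsilon}(t,\cdot)\|_p\to\|\mu(t,\cdot)\|_p$; upper semicontinuity of the norm is simply unavailable. This is precisely why the approximation is performed at time $t$ rather than at time $0$: mollifying $\mu(t,\cdot)$ furnishes the clean \emph{uniform} upper bound $\|\mu_{n,\varepsilon}(s,\cdot)\|_p\le\|\mu(t,\cdot)\|_p$, and only lower semicontinuity of $\|\cdot\|_p$ is then needed to close the argument. Everything else — differentiating under the integral sign, the single integration by parts, the standard vanishing‑viscosity convergence — is routine; the genuinely new ingredient is the symmetrisation identity showing $\int_\T\mu^p A_0[\mu]\,dx\ge0$.
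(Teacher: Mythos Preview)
Your proof is correct and follows essentially the same route as the paper: the core is the energy identity $\frac{d}{dt}\|\mu(t,\cdot)\|_p^p=-(p-1)\int_\T\mu^p A_0[\mu]$ together with the symmetrisation showing $\int_\T\mu^p A_0[\mu]\ge0$, and the endpoints $p=1,\infty$ are handled by mass conservation and Proposition~\ref{prop: monotonie}. The paper arrives at the same identity via a single integration by parts (moving $\partial_\theta$ onto $\mu^{p-1}$) rather than your expand-then-IBP variant, and simply defers the approximation step to ``as in the previous section''; your more detailed treatment of the passage to general data (mollify at time $t$, add viscosity, use weak lower semicontinuity) is a welcome elaboration but not a different strategy.
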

\begin{proof}
The cases $p=1$ and $p=+\infty$ have already be treated in previous sections. Let $1<p<+\infty$. We only prove the statement for smooth solutions, the general result can be obtained by approximation as in the previous section. We compute: 
\begin{align*}
\cfrac{d}{dt}||\mu(t)||_p^p=&p\int_\T\mu(t,\theta)^{p-1}\partial_t\mu(t,\theta)d\theta\\
&=-p\int_\T\mu(t,\theta)^{p-1}\partial_\theta(\mu H[\mu])(t,\theta)d\theta\\
&=p\int_\T\partial_\theta(\mu(t,\theta)^{p-1})(\mu H[\mu])(t,\theta)d\theta\\
&=(p-1)\int_\T\partial_\theta(\mu(t,\theta)^p)H[\mu(t,.)](\theta)d\theta\\
&=-(p-1)\int_\T\mu(t,\theta)^p A_0[\mu(t,.)](\theta)d\theta\\
&=-\cfrac{p-1}{2}\int_{-\pi}^\pi \mu(t,\theta)^p\int_{-\pi}^{\pi}\cfrac{\mu(t,\theta)-\mu(t,\theta')}{\sin^2((\theta-\theta')/2)}d\theta'd\theta\\
&=-\cfrac{p-1}{4}\int_{-\pi}^\pi \int_{-\pi}^{\pi}\cfrac{(\mu(t,\theta)-\mu(t,\theta'))(\mu(t,\theta)^p-\mu(t,\theta')^p)}{\sin^2((\theta-\theta')/2)}d\theta'd\theta.
\end{align*}
The last term being clearly non-positive since $\mu\ge 0$, the result follows. 
\end{proof}

\subsection{Free entropy}
Let us introduce the free entropy of the physical system associated to the Dyson flow on the circle originally introduced by Voiculescu in the real case \cite{voiculescu1993analogues} $\mathcal I: \mesT\to\R\cup\{+\infty\}$ defined by $$\mathcal I(\mu)=-\int_{\T^2} \ln(|\sin((x-y)/2)|)\mu(dx)\mu(dy)-\ln(2)=-\int_\T (\ln(|\sin(./2)|)\ast \mu)(y)\mu(dy)-\ln(2),$$ where $\ln(|\sin(./2)|)\ast \mu$ is understood as the convolution between the distributions
\newline
 $P.V.(\ln(|\sin(./2)|))$ and $\mu$.
The terminology of free entropy makes sense with the fact that in the language of large deviations theory $\mathcal I$ is the good rate function for the empirical mean of eigenvalues of matrices distributed in the circular unitary ensemble \cite{hiai2000large} (and \cite{arous1997large} for the gaussian unitary ensemble) as the usual entropy is the good rate function for large deviation of the empirical mean of independent and identically distributed random variables in Sanov's theorem.
\newline
From now on let us write $\mu_{\text{Unif}}(dx)=dx/(2\pi)$ for the uniform measure on the circle.
\newline
We shall mention some basic properties of $\mathcal I$ and prove later on that it is
decreasing and continuous along the Dyson flow.

\begin{lemma}
We have the following properties for $\mathcal I$.
\begin{enumerate}
\item{$\mathcal I$ is lower semi continuous for the narrow topology, bounded from below and so admits a minimum on $\mesT$. Moreover for all $M\in\R$, $\{\mathcal I\le M\}$ is a compact subset of $\mesT$.}
\item{$\mathcal I(\mu_{\text{Unif}})=0$}
\item{For all $\mu\in\mesT$, 
\begin{equation}
\label{fourier}
\mathcal I(\mu)=\cfrac{1}{2}\sum_{n\in\Z-\{0\}}\cfrac{|c_n(\mu)|^2}{|n|},
\end{equation}
where $c_n(\mu)$ is the $n^{th}$ Fourier coefficient of $\mu$ defined by $c_n(\mu)=\int_\T \exp(-int)\mu(dt).$}
\item{Take $\mu\in \mesT$, then $\mathcal I(\mu)<+\infty$ if and only $\mu\in H^{-1/2}(\T)$.}
\item{$\mathcal I$ is non negative and admits a unique minimiser which is $\mu_{\text{Unif}}$.}
\item{Let $\mu,\nu\in\mesT$, then $\mathcal I(\mu-\nu)\ge 0$ with strict inequality if $\mu\ne \nu$. As a consequence $\mathcal I$ is strictly convex on $\mesT$}
\end{enumerate}
\end{lemma}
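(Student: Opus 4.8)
The plan is to establish the Fourier identity~(3) first, since items (2), (4), (5) and (6) all follow from it by short arguments; only (1) needs a separate, potential-theoretic treatment, and it is independent of the rest.

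To prove (3) I would start from the classical expansion $-\ln\!\big(2|\sin(\theta/2)|\big)=\sum_{n\ge1}\cos(n\theta)/n$ for $\theta\notin2\pi\Z$. Because $\mu\in\mesT$ has total mass $1$, the additive constant $-\ln2$ in the definition of $\mathcal I$ can be absorbed into the kernel, so $\mathcal I(\mu)=-\iint_{\T^2}\ln\!\big(2|\sin((x-y)/2)|\big)\mu(dx)\mu(dy)$. The series $\sum\cos(n\theta)/n$ is not absolutely convergent (and its partial sums are not monotone), so rather than integrating it termwise I would Abel-regularise: for $r\in(0,1)$,
\[
k_r(\theta):=\sum_{n\ge1}\frac{r^{n}\cos(n\theta)}{n}=-\tfrac12\ln\!\big(1-2r\cos\theta+r^{2}\big),
\]
and since this series converges uniformly in $\theta$ for fixed $r$ one gets $\iint k_r\,d\mu\,d\mu=\sum_{n\ge1}(r^{n}/n)\,|c_n(\mu)|^{2}$ (using $\iint e^{\,in(x-y)}\mu(dx)\mu(dy)=|c_n(\mu)|^2$). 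Now $k_r\to-\ln(2|\sin(\theta/2)|)$ pointwise, and elementary estimates trap $k_r$ between $-\ln2$ (from $1-2r\cos\theta+r^2\le(1+r)^2$) and $-\ln(2|\sin(\theta/2)|)+\ln\tfrac{2}{1+r}$ (from minimising $(1-2r\cos\theta+r^2)/\sin^2(\theta/2)$ over $\theta$, which equals $(1+r)^2$). Letting $r\uparrow1$ — monotone convergence on the right-hand series, Fatou's lemma together with the upper bound on the left-hand integral — identifies $\mathcal I(\mu)=\sum_{n\ge1}|c_n(\mu)|^2/n=\tfrac12\sum_{n\neq0}|c_n(\mu)|^2/|n|$. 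The same computation, now with $c_0=0$, applies verbatim to a finite signed measure $\rho$ with $\int\rho=0$ and gives $-\iint\ln|\sin((x-y)/2)|\,\rho(dx)\rho(dy)=\tfrac12\sum_{n\neq0}|c_n(\rho)|^2/|n|$; this is how I would read $\mathcal I(\mu-\nu)$ in~(6).

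Granting (3), the remaining items are soft. Item (2) is immediate since $c_n(\mu_{\text{Unif}})=0$ for $n\neq0$. For (5): (3) shows $\mathcal I\ge0$, and $\mathcal I(\mu)=0$ forces all non-zero Fourier coefficients of $\mu$ to vanish, hence $\mu=\mu_{\text{Unif}}$, so it is the unique minimiser. For (4): since $|c_n(\mu)|\le\mu(\T)=1$, the $n=0$ term is harmless and $|n|^{-1}\asymp(1+|n|)^{-1}\asymp(1+n^2)^{-1/2}$ for $n\neq0$, so $\mathcal I(\mu)<\infty$ iff $\sum_n(1+n^2)^{-1/2}|c_n(\mu)|^2<\infty$, i.e. iff $\mu\in H^{-1/2}(\T)$. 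For (6): the signed-measure identity above gives $\mathcal I(\mu-\nu)=\tfrac12\sum_{n\neq0}|c_n(\mu-\nu)|^2/|n|\ge0$, with equality iff $c_n(\mu-\nu)=0$ for all $n$, i.e. iff $\mu=\nu$; then, writing $\mathcal I(\cdot)=B(\cdot,\cdot)-\ln2$ with the symmetric bilinear form $B(\mu,\nu)=\iint-\ln|\sin((x-y)/2)|\,d\mu\,d\nu$ (finite whenever $\mathcal I(\mu),\mathcal I(\nu)<\infty$, by Cauchy--Schwarz on the Fourier side) and expanding bilinearly along $\mu_t=(1-t)\mu_0+t\mu_1$ gives, for $t\in(0,1)$,
\[
(1-t)\mathcal I(\mu_0)+t\mathcal I(\mu_1)-\mathcal I(\mu_t)=t(1-t)\,\mathcal I(\mu_0-\mu_1),
\]
which is $>0$ when $\mu_0\neq\mu_1$; hence $\mathcal I$ is strictly convex on the convex set $\{\mathcal I<\infty\}=\mesT\cap H^{-1/2}(\T)$.

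For (1) I would argue by potential theory, independently of (3). The kernel $k(x,y)=-\ln|\sin((x-y)/2)|$ is nonnegative (as $|\sin|\le1$) and lower semicontinuous on the compact space $\T^2$, hence the increasing pointwise limit of bounded continuous kernels $k_m=\min(k,m)$; therefore $\mu\mapsto\iint k_m\,d\mu\,d\mu$ is continuous for the narrow topology and $\mathcal I=\sup_m\iint k_m\,d\mu\,d\mu-\ln2$, being a supremum of continuous functionals, is lower semicontinuous and bounded below by $-\ln2$. Since $\T$ is compact, $\mesT$ is narrowly compact, so the lsc functional $\mathcal I$ attains its minimum, and each sublevel set $\{\mathcal I\le M\}$, being closed in the compact space $\mesT$, is compact. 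The only genuinely delicate point in the whole lemma is the passage $r\uparrow1$ in the proof of (3): one has to control a non-absolutely-convergent logarithmic series integrated against an arbitrary probability measure which may charge the diagonal, which is exactly why the two-sided pointwise bound on the Abel-regularised kernels is needed. Everything else follows by routine Fourier analysis and compactness.
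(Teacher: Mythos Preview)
Your proof is correct. The approach to item~(1) is essentially identical to the paper's (truncation of the kernel, lsc as a supremum of continuous functionals, compactness of $\mesT$). The difference lies in how you establish the Fourier identity~(3), and in the order of dependencies.

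The paper computes the Fourier coefficients of the kernel $\ln|\sin(\cdot/2)|$ directly: it differentiates to get $\tfrac12\cotan(\cdot/2)$, then uses Euler's partial-fraction expansion $\tfrac12\cotan(\theta/2)=\sum_{k\in\Z}(\theta-2k\pi)^{-1}$ to reduce the computation of $c_n(\tfrac12\cotan(\cdot/2))$ to the Fourier transform of $P.V.(1/x)$ on the line. Your Abel-regularisation route, by contrast, starts from the closed form $k_r(\theta)=-\tfrac12\ln(1-2r\cos\theta+r^2)$ and the two-sided pointwise bounds you state, then sandwiches the limit $r\uparrow1$ between Fatou and the explicit upper envelope. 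Your argument is more self-contained (no appeal to tempered distributions or Euler's identity) and handles atomic $\mu$ cleanly via the uniform lower bound $k_r\ge-\ln2$; the paper's route is shorter once one is willing to quote those classical facts. A further structural difference: the paper proves~(2) \emph{before}~(3) by a direct computation of $\int_0^{\pi/2}\ln\sin\theta\,d\theta$, whereas you obtain~(2) as an immediate corollary of~(3), which is tidier. Your treatment of strict convexity in~(6) via the bilinear expansion $(1-t)\mathcal I(\mu_0)+t\mathcal I(\mu_1)-\mathcal I(\mu_t)=t(1-t)\mathcal I(\mu_0-\mu_1)$ is also more explicit than the paper's one-line remark that $\mathcal I$ is a quadratic form.
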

\begin{proof}
\begin{enumerate}
\item{We use the usual method in large deviation to show that a such function is a good rate function \cite{arous1997large,dembo2009large,hiai2000large}. Let $K>0$ and let us define $\mathcal I_K:\mesT\to \R$ by $$\mathcal I_K(\mu)=\int_\T \min(-\ln(|\sin((x-y)/2)|),K)\mu(dx)\mu(dy)-\ln(2).$$ Since $(x,y)\to\min(-\ln(|\sin(x-y)/2)|),K)$ is continuous and bounded for all $K>0$, we that for all $K>0$ $\mathcal I_K$ is continuous. Since $\sup_{K>0}\mathcal I_K=\mathcal I$, $\mathcal I$ is lower semi continuous as a supremum of lower semi continuous function. Moreover we have that $\mathcal I\ge -\ln(2)$. Let us remark that $\mesT$ is compact since $\T$ is compact. Hence, since $\mathcal I$ is lower semi continuous, it admits a minimum of $\mesT$ and for all $M\in\R$ $\{\mathcal I\le M\}$ is a closed subset of $\mesT$ and so is compact.}
\item{In order to compute $\mathcal I(\mu_{\text{Unif}})$, we first compute $\ln(|\sin(./2)|)\ast \mu_{\text{Unif}}$
\begin{lemma}
\label{lemma: conv}
We have the following relation: $$(\ln(|\sin(./2)|)\ast \mu_{\text{Unif}})(\theta)=-\ln(2),\,\, \forall \theta\in\T.$$
\end{lemma}
\begin{proof}
First we notice that in the sense of distributions $$(\ln(|\sin(./2)|)\ast \mu_{\text{Unif}})'=\cfrac{1}{2}\cotan(./2)\ast\mu_{\text{Unif}}.$$But for all $\theta\in\T$, $$\left(\cfrac{1}{2}\cotan(./2)\ast\mu_{\text{Unif}}\right)(\theta)=\cfrac{1}{4\pi}\,\,P.V\int_\T\cotan((\theta-\tilde\theta)/2)d\tilde\theta=0, $$ by antisymmetry of the cotangent function. Hence there exists a constant $C\in\R$ such that for all $\theta \in\T$ $$(\ln(|\sin(./2)|)\ast \mu_{\text{Unif}})(\theta)=C.$$ We evaluate this identity in 0 to get that $$\cfrac{1}{2\pi}\int_\T\ln(|\sin(\theta/2)|)d\theta=\cfrac{2}{\pi}\int_0^{\pi/2}\ln(\sin(\theta))d\theta=C.$$
Let $I=\int_0^{\pi/2}\ln(\sin(\theta))d\theta$. We compute this integral:
\begin{align*}
2I&=\int_0^{\pi/2}\ln(\sin(\theta))d\theta+\int_0^{\pi/2}\ln(\cos(\theta))d\theta\\
&=\int_0^{\pi/2}\ln(\sin(2\theta)/2)d\theta\\
&=\int_0^{\pi/2}\ln(\sin(\theta))d\theta-\cfrac{\pi}{2}\ln(2)\\
&=I-\cfrac{\pi}{2}\ln(2)
\end{align*}
We get that for all $\theta\in\T$, $$(\ln(|\sin(./2)|)\ast \mu_{\text{Unif}})(\theta)=-\ln(2)$$
\end{proof}
Since $$\mathcal I(\mu_{\text{Unif}})=-\int_\T (\ln(|\sin(./2)|)\ast \mu)(y)\mu(dy)-\ln(2),$$ we get that $\mathcal I(\mu_{\text{Unif}})=0$.}
\item{Let us notice that:
\begin{align*}
\int_\T (\ln(|\sin(./2)|)\ast \mu)(y)\mu(dy)&=\sum_{n\in\Z}c_n(\ln(|\sin(./2)|)\ast \mu)\overline{c_n(\mu)}\\
&=\sum_{n\in\Z}c_n(\ln(|\sin(./2)|))|c_n(\mu)|^2.
\end{align*}
It remains to compute for all $n\in\Z$ $c_n(\ln(|\sin(./2)|))$. Let us remark that by Lemma \ref{lemma: conv}, $c_0(\ln(|\sin(./2)|))=-\ln(2)$. Moreover, for $n\ne 0$ we have: $$c_n(\ln(|\sin(./2)|))=\cfrac{c_n\left(\cfrac{1}{2}\cotan(./2)\right)}{in}.$$
For $n\ne 0$, 
\begin{align*}
c_n\left(\cfrac{1}{2}\cotan(./2)\right)&=P.V.\int_\T\exp(-in t)\cfrac{\cotan(t/2)}{2}\cfrac{dt}{2\pi}\\
&=P.V.\int_\T\exp(-in t)\sum_{k\in\Z}\cfrac{1}{t-2k\pi} \cfrac{dt}{2\pi}\\
&=\sum_{k\in\Z}P.V.\int_{2k\pi}^{2(k+1)\pi}\cfrac{\exp(-int)}{t}\cfrac{dt}{2\pi}\\
&=P.V.\int_\R\cfrac{\exp(-int)}{t}\cfrac{dt}{2\pi}=-\cfrac{i}{2}\, \text{sign}(n),
\end{align*} 
where we used the Euler's identity for cotangent and the fact that the Fourier transform of the distribution $P.V.(1/x)$ is $-i\pi \,\text{sign}(\xi)$.
\newline
Hence, we get that 
\begin{align*}
\int_\T (\ln(|\sin(./2)|)\ast \mu)(y)\mu(dy)&=\sum_{n\in\Z}c_n(\ln(|\sin(./2)|))|c_n(\mu)|^2\\
&=-\ln(2)|c_0(\mu)|^2-\cfrac{1}{2}\sum_{k\in\Z-\{0\}}\cfrac{|c_k(\mu)|^2}{|k|}.
\end{align*}
Since for $\mu\in\mesT$ we have $c_0(\mu)=1$, for $\mu\in\mesT$, 
$$\mathcal I(\mu)=\cfrac{1}{2}\sum_{k\in\Z-\{0\}}\cfrac{|c_k(\mu)|^2}{|k|}.$$ }
\item{The relation \eqref{fourier} directly implies the result by definition of $H^{-1/2}(\T)$. }
\item{Thanks to \eqref{fourier} we see that $\mathcal I$ is non negative and moreover if $\mu\in\mesT$ is such that $\mathcal I(\mu)=0$ we get that for all $n\ne 0$ $c_n(\mu)=0$. By uniqueness of the Fourier coefficient, $=\mu=\mu_{\text{Unif}}$.
}
\item{Actually, the formula \eqref{fourier} is true for $\mu-\nu$ since $\mu$ and $\nu$ have the same mass ($c_0(\mu)=c_0(\nu)$). Hence, we directly have that $\mathcal I(\mu-\nu)\ge 0$. By using the same argument that shows that the unique minimizer is $\mu_{\text{Unif}}$, we get that $\mathcal I(\mu-\nu)=0$ if and only if $\mu-\nu=0$. The strict convexity is a direct consequence of this property and that $\mathcal I$ is a quadratic form. }
\end{enumerate}
\end{proof}

\begin{proposition}
\label{prop:freeentrop}
Let $(\mu_t)_{t\ge0}$ be a solution to \eqref{dyson} such that $\mathcal I(\mu_0)<+\infty$. Then for any $0\le t'\le t$:
\begin{equation}
\label{freentropy}
\mathcal I(\mu_t)=\mathcal I(\mu_{t'})-\int_{t'}^t\int_\T (H[\mu_s](x))^2\mu_s(dx)ds.
\end{equation}
\end{proposition}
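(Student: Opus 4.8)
The plan is to prove the identity first for smooth solutions by differentiating $t\mapsto\mathcal I(\mu_t)$ directly, and then to propagate it to general solutions through the vanishing viscosity approximation of Lemma~\ref{lemma: Regularization}, the real work being to turn an inequality into an equality in the limit.

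\emph{Smooth solutions.} Writing $K(\theta)=-\ln|\sin(\theta/2)|$ so that $\mathcal I(\mu_t)=\int_{\T^2}K(x-y)\,\mu_t(dx)\mu_t(dy)-\ln 2$, I would differentiate in time. Since $K$ is symmetric and $\partial_t\mu_t=-\partial_\theta(\mu_tH[\mu_t])$,
\[
\frac{d}{dt}\mathcal I(\mu_t)=2\int_\T (K\ast\mu_t)(x)\,\partial_t\mu_t(x)\,dx=-2\int_\T (K\ast\mu_t)(x)\,\partial_x\big(\mu_tH[\mu_t]\big)(x)\,dx .
\]
Integrating by parts in $x$ (no boundary term on $\T$) and using $K'(\theta)=-\tfrac12\cotan(\theta/2)$, hence $\partial_x(K\ast\mu_t)=-\tfrac12 H[\mu_t]$ — which is exactly the Fourier computation behind Lemma~\ref{lemma: conv} and formula~\eqref{fourier} — one obtains $\frac{d}{dt}\mathcal I(\mu_t)=-\int_\T (H[\mu_t](x))^2\mu_t(dx)$, and integrating between $t'$ and $t$ yields~\eqref{freentropy}.

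\emph{General solutions.} For $\mu_0\in C^\infty(\T)$ let $\mu^\varepsilon$ solve the regularized equation~\eqref{equationsmooth}. The same computation with the extra term $\varepsilon\partial_{xx}\mu^\varepsilon$ produces, because $c_n(K)$ is proportional to $|n|^{-1}$ for $n\neq0$, one additional nonnegative dissipative contribution, so that after integration
\[
\mathcal I(\mu^\varepsilon_t)+\int_{t'}^t\!\!\int_\T (H[\mu^\varepsilon_s])^2\mu^\varepsilon_s\,ds+\varepsilon c\int_{t'}^t\|\mu^\varepsilon_s\|_{\dot H^{1/2}}^2\,ds=\mathcal I(\mu^\varepsilon_{t'})
\]
for some $c>0$. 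The key point is that the viscous term vanishes as $\varepsilon\to 0$: testing~\eqref{equationsmooth} against $\mu^\varepsilon$ and noting that the nonlinear contribution $-\tfrac12\int(\mu^\varepsilon)^2A_0[\mu^\varepsilon]$ is nonpositive — this is the $p=2$ instance of the computation in the proof of Proposition~\ref{prop: Lp decay} — gives the \emph{uniform-in-$\varepsilon$} energy bound $\|\mu^\varepsilon_t\|_{L^2}^2+2\varepsilon\int_0^t\|\partial_x\mu^\varepsilon_s\|_{L^2}^2\,ds\le\|\mu_0\|_{L^2}^2$, whence by interpolation $\|\mu^\varepsilon_s\|_{\dot H^{1/2}}^2\le\|\mu^\varepsilon_s\|_{L^2}\|\partial_x\mu^\varepsilon_s\|_{L^2}$ and $\varepsilon\int_{t'}^t\|\mu^\varepsilon_s\|_{\dot H^{1/2}}^2\,ds\lesssim\sqrt\varepsilon\to 0$. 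To pass to the limit in the remaining terms I would use the $L^\infty$ a priori estimate (unaffected by the $\varepsilon$-term, cf. the remark after Proposition~\ref{prop: estimée infini}) together with the equation to get, by an Aubin--Lions argument, $\mu^\varepsilon\to\mu$ in $C([t',T];H^{-1/2})$ with $\mu$ the solution of~\eqref{dyson}; then $H[\mu^\varepsilon]\to H[\mu]$ in $L^2$, so $\int_{t'}^t\!\int_\T(H[\mu^\varepsilon_s])^2\mu^\varepsilon_s\to\int_{t'}^t\!\int_\T(H[\mu_s])^2\mu_s$ (an $L^1$-strongly times an $L^\infty$-weakly-$\ast$ convergent product), while $\mathcal I=\tfrac12\|\cdot\|_{\dot H^{-1/2}}^2$ on $\mesT$ by~\eqref{fourier} is continuous for the $H^{-1/2}$ convergence, so $\mathcal I(\mu^\varepsilon_t)\to\mathcal I(\mu_t)$ for every $t$. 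This gives~\eqref{freentropy} for smooth initial data (and all $0\le t'\le t$, since $\|\mu^\varepsilon\|_{L^\infty_tL^2_x}\le\|\mu_0\|_{L^2}<\infty$). For arbitrary $\mu_0\in\mesT$ with $\mathcal I(\mu_0)<+\infty$ one argues for $0<t'\le t$ first: by Proposition~\ref{Bound Linfini} the datum $\mu_{t'}$ has a bounded density, which is approximated in $L^2$ by smooth densities, and one concludes using the stability of the Dyson flow (comparison principle) and the $L^2$-continuity of $\mathcal I$; the case $t'=0$ then follows by letting $t'\to 0^+$, combining the monotonicity of $t'\mapsto\mathcal I(\mu_{t'})$, the lower semicontinuity of $\mathcal I$, and a matching upper bound obtained by approximating $\mu_0$ itself by mollified densities $\mu_0^\delta$ with $\mathcal I(\mu_0^\delta)\to\mathcal I(\mu_0)$.

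The main obstacle is this passage to the limit: the inequality $\mathcal I(\mu_t)+\int_{t'}^t\!\int_\T(H[\mu_s])^2\mu_s\le\mathcal I(\mu_{t'})$ is cheap (discard the viscous term and use lower semicontinuity of $\mathcal I$), whereas the \emph{equality} requires the two genuinely nontrivial ingredients above — the uniform-in-$\varepsilon$ $L^2$ energy estimate, which is what makes the viscous dissipation disappear, and compactness strong enough ($H^{-1/2}$, i.e. essentially $L^2$) for $\mathcal I$ and the dissipation functional to be continuous, not merely lower semicontinuous, along the approximations.
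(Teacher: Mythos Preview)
Your smooth-case computation is the same as the paper's. For the extension to non-smooth solutions you take a genuinely different route: the paper regularizes the \emph{kernel} $K=-\ln|\sin(\cdot/2)|$ (replacing it by a mollified $K_\delta$) while keeping the actual solution $\mu$ fixed, computes $\frac{d}{dt}\int_{\T^2}K_\delta(x-y)\mu_t(dx)\mu_t(dy)$ directly, and lets $\delta\to0$ using that $\mu\in L^\infty_{t,x}$ (hence $H[\mu]\in L^p$ for all $p<\infty$) and dominated convergence. You instead regularize the \emph{equation} by vanishing viscosity and try to pass to the limit in $\mu^\varepsilon$. The paper's method is cheaper precisely because it never approximates $\mu$, so no strong compactness on the solutions is needed.

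Your approach has a gap at the dissipation term. From Aubin--Lions with the bounds you state ($\mu^\varepsilon$ bounded in $L^\infty_t(L^2_x)$, $\partial_t\mu^\varepsilon$ bounded in $L^2_t(H^{-1})$) you correctly obtain $\mu^\varepsilon\to\mu$ in $C([t',T];H^{-1/2})$, but the next line ``then $H[\mu^\varepsilon]\to H[\mu]$ in $L^2$'' does not follow: $H$ is an $L^2$-isometry, so this would require $\mu^\varepsilon\to\mu$ in $L^2$, which $H^{-1/2}$ convergence plus an $L^\infty$ bound do not give (no uniform positive-regularity estimate is available to interpolate against). Equivalently, via the Cotlar identity the dissipation is $\tfrac13(\|\mu^\varepsilon_s\|_{L^3}^3-\tfrac{1}{4\pi^2})$, and weak convergence only yields $\|\mu_s\|_{L^3}\le\liminf\|\mu^\varepsilon_s\|_{L^3}$. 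Since your other three terms do converge (the two $\mathcal I$ terms by $H^{-1/2}$-continuity of $\mathcal I$, the viscous term to $0$ by your $\sqrt\varepsilon$ estimate), the dissipation integral certainly has a limit, namely $\mathcal I(\mu_{t'})-\mathcal I(\mu_t)$; what is missing is the identification of this limit with $\int_{t'}^t\int_\T(H[\mu_s])^2\mu_s$, and that is exactly the strong compactness you do not have. A fix would be uniform-in-$\varepsilon$ H\"older bounds on $\mu^\varepsilon$ (in the spirit of the last subsection of Section~5, which however needs $m(0)>0$); the more economical fix is the paper's: regularize $K$, not the PDE.
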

\begin{proof}
First let us suppose that $\mu_t$ is a smooth solution to \eqref{dyson}. We compute:
\begin{align*}
\cfrac{d}{dt}\,\mathcal I(\mu_t)&=-2\int_\T\ln(|\sin((x-y)/2)|)\mu_t(dy)\partial_t\mu_t(dx)\\
&=-2\int_\T (\ln(|\sin(./2)|)\ast \mu_t)(x)\partial_t\mu_t(dx)\\
&=+2\int_\T (\ln(|\sin(./2)|)\ast \mu_t)\partial_x(H[\mu_t]\mu_t)\\
&=-2\int_\T \partial_x(\ln(|\sin(./2)|)\ast \mu_t)(x)H[\mu_t](x)\mu_t(dx)\\
&=-\int_\T (H[\mu_t](x))^2\mu_t(dx)
\end{align*}
We deduce that: $$\mathcal I(\mu_t)=\mathcal I(\mu_{t'})-\int_{t'}^t\int_\T (H[\mu_s](x))^2\mu_s(dx)ds.$$
To prove that the identity \eqref{freentropy} is still true for non smooth solution we can do the same approach as in \cite{bertucci2024spectral} in the real Dyson case by first proving this identity if $\mu(0,.)$ is bounded by regularisation of the kernel $-\ln(\sin(./2)$ by convolution and using the continuity of the Hilbert transform in all the $L^p$ for $1\le p<+\infty$. For the general case, thanks to the $L^\infty$ regularisation result we have that for all $t>0$, $\mu(t,.)$ is bounded. So we can apply the result for $0<t'\le t$ and pass to the limit paying attention to the fact that $\mathcal I$ is just lower semi continuous and not continuous. 
\end{proof}
\begin{corollary}
Let $\mu_0\in\mesT$ be such that $\mathcal I(\mu_0)<\infty$. Then $(\mathcal I(\mu_t))_{t\ge0}$ is a continuous non increasing function.
\end{corollary}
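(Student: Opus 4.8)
The plan is to read off both assertions directly from the energy identity \eqref{freentropy} of Proposition~\ref{prop:freeentrop}. Write $g(s):=\int_\T (H[\mu_s](x))^2\,\mu_s(dx)\ge 0$ for the dissipation term, so that \eqref{freentropy} says $\mathcal I(\mu_t)=\mathcal I(\mu_{t'})-\int_{t'}^t g(s)\,ds$ for all $0\le t'\le t$. The monotonicity is then immediate: for $t'\le t$ one has $\mathcal I(\mu_t)-\mathcal I(\mu_{t'})=-\int_{t'}^t g(s)\,ds\le 0$ since $g\ge 0$, so $t\mapsto \mathcal I(\mu_t)$ is non-increasing.

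For the continuity, the key observation I would make is that the hypothesis $\mathcal I(\mu_0)<+\infty$ forces the dissipation $g$ to be integrable on $\R^+$. Indeed, taking $t'=0$ in \eqref{freentropy} — which is exactly the case of Proposition~\ref{prop:freeentrop} where the assumption $\mathcal I(\mu_0)<+\infty$ enters — gives, for every $t>0$, $\int_0^t g(s)\,ds=\mathcal I(\mu_0)-\mathcal I(\mu_t)\le \mathcal I(\mu_0)<+\infty$, using that $\mathcal I\ge 0$ on $\mesT$. Hence $g\in L^1(\R^+)$. Consequently $t\mapsto \int_0^t g(s)\,ds$ is continuous on $[0,+\infty)$ (in fact absolutely continuous on compact intervals), and since $\mathcal I(\mu_t)=\mathcal I(\mu_0)-\int_0^t g(s)\,ds$ for all $t\ge 0$, the map $t\mapsto \mathcal I(\mu_t)$ is continuous on $[0,+\infty)$; in particular it is right-continuous at $0$ with value $\mathcal I(\mu_0)$, which completes the proof.

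The one point deserving attention is the use of \eqref{freentropy} at $t'=0$. If one prefers to rely only on its version for $0<t'\le t$, one can instead argue on $(0,+\infty)$: for each fixed $\varepsilon>0$, $\mu_\varepsilon$ has a bounded density by Proposition~\ref{Bound Linfini}, hence $\mu_\varepsilon\in H^{-1/2}(\T)$ and $\mathcal I(\mu_\varepsilon)<+\infty$, so $\int_\varepsilon^t g(s)\,ds\le \mathcal I(\mu_\varepsilon)<+\infty$ shows $g\in L^1_{\mathrm{loc}}((0,+\infty))$ and $\mathcal I(\mu_t)=\mathcal I(\mu_\varepsilon)-\int_\varepsilon^t g(s)\,ds$ is continuous on $(0,+\infty)$; continuity at $0$ is then recovered by letting $\varepsilon\downarrow 0$, using monotone convergence for the integral together with the lower semicontinuity of $\mathcal I$ and the narrow continuity of $t\mapsto\mu_t$. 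I expect this last step — matching $\lim_{\varepsilon\to 0^+}\mathcal I(\mu_\varepsilon)$ with $\mathcal I(\mu_0)$ rather than just a lower bound for it — to be the only genuinely delicate part, everything else being bookkeeping on \eqref{freentropy}.
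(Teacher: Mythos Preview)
Your proof is correct and is precisely the natural deduction the paper intends: the corollary is stated there without proof, as an immediate consequence of the energy identity \eqref{freentropy}, and your argument (nonnegativity of the dissipation for monotonicity, finiteness of $\int_0^\infty g$ from $\mathcal I\ge 0$ for continuity) is exactly how one unpacks that. Your caveat about the case $t'=0$ is well placed and matches the remark at the end of the proof of Proposition~\ref{prop:freeentrop}, where the authors note that passing to the limit $t'\to 0^+$ requires care because $\mathcal I$ is only lower semicontinuous; since the proposition is ultimately stated for all $0\le t'\le t$, you may safely invoke it at $t'=0$ as you do in your main argument.
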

As an application we use this property to prove the convergence towards the uniform measure on the circle of a solution to the Dyson equation. Let us mention that it is already known in the literature using a more computational approach.  Indeed in \cite{cepa2001brownian} it has been proved that if $\mu$ is a solution to the Dyson equation then $\mu(t,.)$ converges in law towards $\mu_{\text{Unif}}$ when $t$ goes to $\infty$. The proof is based on the computation of the Fourier coefficients of a solution to $\eqref{dyson}$ and finding a recursive equation satisfied by these coefficients and then finding a unique solution at the limit of this recursive equation.

\begin{proposition}
Let $(\mu_t)_{t\ge0}$ be a solution to \eqref{dyson} such that $\mu_0\in\mesT$. Then $\mu_t\underset{t\to\infty}{\longrightarrow}\mu_{\text{Unif}}$ for the narrow topology. 
\end{proposition}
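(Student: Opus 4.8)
The plan is to prove that the free entropy $\mathcal I(\mu_t)$ tends to $0$ as $t\to\infty$; since $\mathcal I(\mu_t)=\tfrac12\sum_{n\neq 0}|c_n(\mu_t)|^2/|n|$, this forces $c_n(\mu_t)\to 0=c_n(\mu_{\text{Unif}})$ for every $n\neq0$, hence $\mu_t\to\mu_{\text{Unif}}$ for the narrow topology. By the $L^\infty$ regularization (Proposition~\ref{Bound Linfini}) together with the decay of the $L^\infty$ norm (Proposition~\ref{prop: Lp decay} with $p=+\infty$), one has $\|\mu_t\|_\infty\le M:=\tfrac{1}{2\sqrt{1-e^{-1}}}$ for every $t\ge 1$; in particular $\mathcal I(\mu_t)<+\infty$ for $t\ge 1$, so, restarting the flow at time $1$, I may assume $\mu_0\in\mesT$ has density bounded by $M$. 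Then the corollary to Proposition~\ref{prop:freeentrop} gives that $t\mapsto\mathcal I(\mu_t)$ is non-increasing, hence converges to some $\ell\ge0$, and Proposition~\ref{prop:freeentrop} yields the dissipation estimate $\int_0^{\infty}\int_\T(H[\mu_s](x))^2\mu_s(dx)\,ds=\mathcal I(\mu_0)-\ell<+\infty$.

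Suppose, for contradiction, that $\ell>0$, and fix a sequence $t_n\to+\infty$. Since $\mesT$ is compact for the narrow topology, up to a subsequence $\mu_{t_n}\to\nu$ narrowly; because $\|\mu_{t_n}\|_\infty\le M$, the limit $\nu$ has density bounded by $M$ and $\mu_{t_n}\rightharpoonup\nu$ weakly in $L^2(\T)$. Moreover $\mathcal I$ is continuous along this sequence: $c_n(\mu_{t_n})\to c_n(\nu)$ for each $n$, while the bound $\sum_{n\neq0}|c_n(\mu_{t_n})|^2\le\|\mu_{t_n}\|_{L^2}^2\le CM^2$ makes the tails $\sum_{|n|>N}|c_n(\mu_{t_n})|^2/|n|\le CM^2/N$ uniformly small, so $\mathcal I(\mu_{t_n})\to\mathcal I(\nu)$ and therefore $\mathcal I(\nu)=\ell$. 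Next I would invoke stability of the flow: by the comparison principle (Theorem~\ref{thm:comparison principle}), uniqueness of bounded viscosity solutions, and the half-relaxed limits method (all standard for viscosity equations), $\mu_{t_n+s}\to\nu_s$ narrowly for every $s\ge0$, where $(\nu_s)_{s\ge0}$ solves $\eqref{dyson}$ from $\nu$. For each fixed $s$ one has $\|\mu_{t_n+s}\|_\infty\le M$ and $\mathcal I(\mu_{t_n+s})\to\ell$ (as $t_n+s\to\infty$), so the same continuity argument gives $\mathcal I(\nu_s)=\ell$ for all $s$. Applying Proposition~\ref{prop:freeentrop} to $(\nu_s)$ yields $\int_0^s\int_\T(H[\nu_r](x))^2\nu_r(dx)\,dr=\mathcal I(\nu_0)-\mathcal I(\nu_s)=0$ for every $s$, hence $H[\nu_r]=0$ $\nu_r$-a.e.\ for a.e.\ $r$. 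Splitting $\langle\nu_r H[\nu_r],\phi\rangle$ and using that $H[\nu_r]$ vanishes $\nu_r$-a.e.\ shows $\nu_r H[\nu_r]=0$ as a distribution, so $\partial_t\nu_r=0$ and $\nu_r\equiv\nu$; thus $\nu$ is a stationary solution of $\eqref{dyson}$ with $H[\nu]=0$ on $\SUPP\nu$ and with density bounded by $M$.

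It then remains to show that such a $\nu$ must equal $\mu_{\text{Unif}}$, which contradicts $\mathcal I(\nu)=\ell>0$ and closes the argument. The logarithmic potential $U_\nu(x)=-\int_\T\log|\sin((x-y)/2)|\,\nu(dy)$ satisfies $U_\nu'=-\tfrac12 H[\nu]$, so $U_\nu$ is constant on each connected component of the interior of $\SUPP\nu$. If $\SUPP\nu=\T$, then $U_\nu$ is globally constant, so the first variation of the energy $\mathcal E=\mathcal I+\ln 2$ vanishes in every admissible direction; since $\mathcal E$ is strictly convex on the convex set $\mesT$ (Lemma, item~(6)), $\nu$ is its unique minimiser $\mu_{\text{Unif}}$. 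If instead $\SUPP\nu\neq\T$, its complement contains a maximal open arc $(a,b)$ with $a,b\in\SUPP\nu$; near $a$ and on the support side the identity $H[\nu]=0$ is a finite Hilbert transform (airfoil) equation, whose non-trivial probability solutions have a density blowing up like $C(a-y)^{-1/2}$ as $y\to a$ along $\SUPP\nu$, which contradicts boundedness of the density of $\nu$ (the degenerate soft-edge case $C=0$ is ruled out because, applied at every endpoint of every complementary arc, it leaves no non-trivial bounded probability solution, as already seen for a single arc).

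The routine parts are the monotonicity/compactness Lyapunov (LaSalle-type) scheme, which rests entirely on the entropy identity of Proposition~\ref{prop:freeentrop}, the continuity of $\mathcal I$ along narrowly convergent $L^2$-bounded sequences, and stability of the viscosity flow. The genuine obstacle is the last step, the rigidity statement that a stationary solution with bounded density must be the uniform measure; this is exactly the place where the quantitative $L^\infty$ bound coming from the regularization result is indispensable, since without it the (unbounded) equilibrium measures of proper closed subsets of $\T$ are stationary solutions with finite free entropy.
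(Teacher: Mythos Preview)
Your approach is a LaSalle-type invariance argument, genuinely different from the paper's. The paper bypasses the stability and rigidity steps entirely by applying Cotlar's identity $H[u]^2=u^2-c_0(u)^2+2H[uH[u]]$ to $u=2\pi\mu_t$, obtaining
\[
\int_\T H[\mu_t]^2\,\mu_t \;=\; \tfrac{1}{3}\Bigl(\|\mu_t\|_3^3-\tfrac{1}{4\pi^2}\Bigr),
\]
so finiteness of the total entropy dissipation yields $\int_0^\infty\bigl(\|\mu_t\|_3^3-\tfrac{1}{4\pi^2}\bigr)\,dt<\infty$; since $t\mapsto\|\mu_t\|_3$ is non-increasing (Proposition~\ref{prop: Lp decay}), the integrand is monotone and hence $\|\mu_t\|_3^3\to\tfrac{1}{4\pi^2}$. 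Any weak $L^3$ limit $\tilde\mu$ then satisfies $\|\tilde\mu\|_3^3\le\tfrac{1}{4\pi^2}$ and is a probability density, so the equality case in H\"older forces $\tilde\mu=1/(2\pi)$. No stability of the flow and no classification of stationary solutions are needed.

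Your route is sound in outline, but the rigidity step is a genuine gap. First, the dichotomy is not quite right: $H[\nu]=0$ holds $\nu$-a.e., i.e.\ on $\{\nu>0\}$, which may be a proper subset of $\SUPP\nu$; so even when $\SUPP\nu=\T$ you cannot conclude $U_\nu$ is globally constant unless $\{\nu>0\}$ has full measure. Second, and more seriously, the airfoil-equation endpoint argument is only a heuristic: for a bounded density $\nu$ supported on an arbitrary closed set, proving that $H[\nu]=0$ on $\{\nu>0\}$ forces an inverse-square-root blow-up at every boundary point (and that the ``soft-edge'' alternative leaves no non-trivial bounded probability solution) is a substantial piece of potential theory that you sketch rather than prove. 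Ironically, the cleanest way to close your rigidity step is exactly the paper's key lemma: if $\nu$ is bounded with $H[\nu]=0$ $\nu$-a.e., then $\int_\T H[\nu]^2\,\nu=0$, hence by Cotlar $\|\nu\|_3^3=\tfrac{1}{4\pi^2}$, and H\"older equality gives $\nu=\mu_{\text{Unif}}$. So Cotlar's identity is not really avoidable, and once you have it the LaSalle machinery (and the unproved stability step) become unnecessary.
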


\begin{proof}
First, since we are interested in the long time behaviour of the solution $(\mu_t)_{t\ge0}$, we can suppose without loss of generality that $\mathcal I(\mu_0)<+\infty$. Indeed, by Proposition \ref{Bound Linfini}, for $t>0$, $\mu_t$ is in $L^\infty(\T)$ and so $\mathcal I(\mu_t)<+\infty$.
\newline
\tab Let us recall that since $\mesT$ is compact for the narrow topology, $(\mu_t)_{t\ge0}$ is tight. So, it remains to prove that the only accumulative point of $(\mu_t)_{t\ge0}$ is $\mu_{\text{Unif}}$. To lighten the notations let us assume that $\mu_t\underset{t\to\infty}{\longrightarrow}\mu\in\mesT$ for the narrow topology instead of considering an extraction.
\newline
By Proposition $\ref{prop:freeentrop}$, for all $t\ge 0$, $$\mathcal I(\mu_t)=\mathcal I(\mu_{0})-2\int_{0}^t\int_\T (H[\mu_t](x))^2\mu_t(dx).$$
Let us recall the Cotlar's identity on the circle. For $u\in L^2(\T)$, if we define $H[u](\theta):=P.V. \int_\T\cotan((\theta-\theta')/2)u(\theta')d\theta'/(2\pi)$ and $c_0(u):=\int_\T u(\theta)\,d\theta/(2\pi)$ we have $$H[u]^2=u^2-c_0(u)^2+2H[uH[u]].$$
Evaluation this identity in $2\pi\mu(t,.)$ for $t>0$ we get that: 
\begin{equation}
\label{integration}
4\pi^2 H[\mu_t]^2=4\pi^2\mu_t^2-1+8\pi^2 H[\mu_tH[\mu_t]].
\end{equation}
Multiplying by $\mu_t$ and integrating in space yields: $$4\pi^2 \int_\T H[\mu_t]^2\mu_t(d\theta)=4\pi^2\int_\T\mu_t(\theta)^3 d\theta-1+8\pi^2 \int_\T H[\mu_tH[\mu_t]]\mu_t(d\theta).$$
Using the usual antisymmetric property of the Hilbert transform, for $f$ and $g$ real valued and in $L^2(\T)$ we have:  $$\int_\T H[f]gd\theta=-\int_\T f H[g] d\theta.$$ 
Using this identity in \eqref{integration}, we get that for all $t>0$: 
$$\int_\T H[\mu_t]^2\mu_t(d\theta)=\cfrac{1}{3}\left(\int_\T\mu_t(\theta)^3d\theta-\cfrac{1}{4\pi^2}\right).$$
Hence for all $t\ge 0$, we get: $$\mathcal I(\mu_t)=\mathcal I(\mu_{0})-\cfrac{2}{3}\left(\int_{0}^t\left[\int_\T \mu_t(\theta)^3d\theta-\cfrac{1}{4\pi^2}\right]dt\right).$$
Since $(\mathcal I(\mu_t))_{t\ge 0}$ is non negative we deduce that $$\int_{0}^{+\infty}\left[\int_\T \mu_t(\theta)^3d\theta-\cfrac{1}{4\pi^2}\right]dt<+\infty.$$
By Proposition $\ref{prop: Lp decay}$ for $p=3$ the integrand in time is non increasing in time. This implies that 
\begin{equation}
\label{eq:mino}
\int_\T\mu_t(\theta)^3 d\theta\underset{t\to\infty}{\longrightarrow}\cfrac{1}{4\pi^2}.
\end{equation}
Let us notice that for $\nu$ a probability density on $\T$, Holder's inequality implies $$1=\int_\T\nu(\theta)d\theta\le \left(\int_\T\nu^3(\theta)d\theta\right)^{1/3}(2\pi)^{2/3}.$$
So, for $\nu$ a probability density on $\T$, $$\int_\T\nu^3(\theta)d\theta\ge\cfrac{1}{4\pi^2}$$ with equality if and only if $\nu=1/(2\pi)$. We shall use this property to prove that $\mu$ is the uniform measure on $\T$ since the lower bound is exactly the limit we get in \eqref{eq:mino}. 
\newline
Since $(\mu_t)_{t\ge 1}$ is bounded in $L^3(\T)$ by Proposition \ref{coro:L^p reg}, we can consider up to extraction a weak limit in $\sigma(L^3(\T),(L^3(\T))'=L^{3/2}(\T))$ where the $(L^3(\T))'$ is the topological dual of $L^3(\T)$. Let us write $\tilde \mu$ this limit and forget the extraction to lighten notations again. More exactly we have that for for all $\phi\in L^{3/2}(\T)$, $$\int_\T\phi(\theta)\mu_t(\theta)d\theta\underset{t\to\infty}{\longrightarrow}\int_\T\phi(\theta)\tilde \mu(\theta)d\theta.$$
If we take $\phi=1\in L^{3/2}(\T)$ and $\phi=\mathds 1_{\tilde\mu<0}\in L^{3/2}(\T)$ we get that $\tilde\mu$ is a probability density on $\T$ since for all $t$, $\mu_t$ is.
Moreover since $C(\T)\subset L^{3/2}(\T)$, we get that $(\mu_t)_{t\ge1}$ converges for the narrow topology to $\tilde \mu(\theta)d\theta$. By uniqueness of this limit we get $\mu=\tilde \mu(\theta)d\theta$.
Finally let us recall that since $(\mu_t)_{t\ge 1}$ converges weakly in $L^3(\T)$ to $\tilde\mu$, we have: $$||\tilde\mu||_{L^3}^3\le\liminf_{t\ge 1}||\mu_t||_{L^3}^3=\cfrac{1}{4\pi^2},$$ by $\eqref{eq:mino}$. By the Holder equality case we get that $\tilde\mu(\theta)=1/(2\pi)$. Hence, we get that $\mu=\mu_{\text{Unif}}$ is the unique limit for the narrow topology to $(\mu_t)_{t\ge 1}$ which gives the result.
\end{proof}

\subsection{Quantitative convergence to the equilibrium}

As explained, it is known that if $\mu$ is a solution to the Dyson equation then $\mu(t,.)$ converges in law towards $\mu_{\text{Unif}}$ when $t$ goes to $\infty$ \cite{cepa2001brownian}. Since we know by Section $\ref{se:regula}$ that $\mu(t,.)$ has a density for $t>0$, it is quite natural to look at the convergence in $L^p(\T)$ norm for $p\in[1,+\infty]$ of these densities towards the uniform density on the circle (which implies the convergence in law).

\subsubsection{Convergence of the maximum of a solution and convergence in $L^p$ for $p\in[1,\infty)$}
\label{subsubsection:maxconv}
We first prove that the maximum of a solution of the Dyson equation converges to $\frac{1}{2\pi}$. This proof is based on the so-called Constantin-Vicol lemma for the fractional Laplacian \cite{constantin2012nonlinear}.
\begin{lemma}
\label{lemma:boundlaplacianfractional}
Let $f\in C(\T)$ be a non negative function such that $f(x_0)=M$ is the maximum of $M$ and $f(\tilde x_0)=m$ is the minimum of $f$. Then we have the following bounds: 
\begin{align*}
A_0(f)(x_0)\ge&\cfrac{2}{\pi^2}\left(2\pi M-||f||_{L^1(\T)}\right)\\
A_0(f)(\tilde x_0)\le&\cfrac{2}{\pi^2}\left(2\pi m-||f||_{L^1(\T)}\right).
\end{align*}
\end{lemma}

\begin{proof}
We only give a proof for the first bound since the second one is exactly similar. 
\newline 
Start by the Euler identity of $1/\sin^2$
\begin{equation}
\begin{split}
A_0(f)(x_0)&=\cfrac{1}{2}\int_{[x_0-\pi,x_0+\pi]} \cfrac{f(x_0)-f(y)}{\sin^2((x_0-y)/2)}dy\\
&=2\int_{[x_0-\pi,x_0+\pi]} \left[f(x_0)-f(y)\right]\sum_{k\in\Z}\cfrac{1}{(x_0-y-2k\pi^2)}dy.
\end{split}
\end{equation}
Since all the quantities in the integrand are non negative, Fubini-Tonelli's theorem gives
\begin{equation}
\begin{split}
A_0(f)(x_0)&=2\int_{[x_0-\pi,x_0+\pi]} \left[f(x_0)-f(y)\right]\sum_{k\in\Z}\cfrac{1}{(x_0-y-2k\pi)^2}dy\\
&\ge 2\int_{[x_0-\pi,x_0+\pi]} \cfrac{f(x_0)-f(y)}{(x_0-y)^2}dy.
\end{split}
\end{equation}
If $y\in [x_0-\pi,x_0+\pi]$ then $|y-x_0|\le \pi$ and so we have:
\begin{equation}
A_0(f)(x_0)\ge \cfrac{2}{\pi^2}\int_{[x_0-\pi,x_0+\pi]} \left[f(x_0)-f(y)\right]dy=\cfrac{2}{\pi^2} \left[2\pi M-||f||_{L^1(\T)}\right]
\end{equation}
\end{proof}

\begin{proposition}
\label{Prop:longtimemax}
Let $\mu$ be a solution of \eqref{dyson} then for all $t\ge 0$ 
\begin{equation}
\label{Linftybound}
M(t)\le \frac{1}{2\pi}\cfrac{1}{1-\exp(-2t/\pi^2)}.
\end{equation}
\end{proposition}

\begin{proof}
We only give an a priori estimate for a smooth solution of \eqref{dyson}. To justify that this estimate is true for every solution of \eqref{dyson} we can do the exact same proof as in Section \ref{se:regula}.
\newline
For $t\ge 0$ let $x(t)$ such that $M(t)=\mu(t,x(t))$. Evaluating the Dyson equation \eqref{dyson} in $(t,x(t))$ we get $$M'(t)\le -M(t)A_0[\mu(t,.)](x(t)).$$
Using Lemma \ref{lemma:boundlaplacianfractional} we obtain 
\begin{equation}
\label{eq:majoration1}
M'(t)\le -\cfrac{2M(t)}{\pi^2}\, \left[2\pi M(t)-1\right]=-\cfrac{4M(t)}{\pi}\left[M(t)-\cfrac{1}{2\pi}\right].
\end{equation}
We recall that for all $t\ge 0$ we have $M(t)\ge \frac{1}{2\pi}$. So if there exists $t^*\ge 0$ such that $M(t)=\frac{1}{2\pi}$ then $M(t)=\frac{1}{2\pi}$ for $t\ge t^*$ by the monotone principle and so the inequality \eqref{Linftybound} is true for $t\ge t^*$. So it remains to prove the result when $M(t)>\frac{1}{2\pi}$. In this case we can divide and integrate the equation \eqref{eq:majoration1} to obtain:
$$2\pi\left[\ln\left(\cfrac{M(t)-\frac{1}{2\pi}}{M(0)-\frac{1}{2\pi}}\right)-\ln\left(\cfrac{M(t)}{M(0)}\right)\right]\le -\cfrac{4}{\pi}\,t.$$
It yields 
$$\cfrac{1-\cfrac{1}{2\pi M(t)}}{1-\cfrac{1}{2\pi M(0)}}\le\exp\left(-\frac{2t}{\pi^2}\right).$$
In particular $$ 1-\cfrac{1}{2\pi M(t)}\le\exp(-2t/\pi^2),$$ which gives $$M(t)\le \frac{1}{2\pi}\cfrac{1}{1-\exp(-2t/\pi^2)}.$$
\end{proof}

\begin{remark}
Let us mention that this bound is stronger than the bound obtained in Proposition \ref{Bound Linfini} when $t$ goes to $+\infty$ but it is weaker when $t$ goes to 0. Typically the bound obtained in Proposition \ref{Bound Linfini} proves that the solution of the Dyson equation are in $L^1([0,T],L^\infty)$ for every $T>0$.
\end{remark}

\begin{remark}
Doing the exact similar proof one can obtain a similar result for the minimum of a solution of \eqref{dyson}. More precisely we can show that $$m(t)\ge \cfrac{1}{2\pi}\,\cfrac{1}{1+\left(\frac{1}{2\pi m(0)}-1\right)\exp\left(\frac{-2t}{\pi^2}\right)}\,.$$
Let us notice that this bound does not give any informations if $m(0)=0$ as we shall discuss in Section \ref{subsubsection:convlinfty}.
\end{remark}

Proposition \ref{Prop:longtimemax} implies without hypothesis on $\mu(0,.)$ a convergence in $L^p(\T)$ for $p\in[1,+\infty)$. In particular, it gives a new proof of the convergence in law towards $\mu_{\text{Unif}}$.
\begin{proposition}
Let $\mu$ be a solution to $\eqref{dyson}$ and $p\in[1,+\infty)$. Then we have the following convergence: 
$$\left|\left|\mu(t,.)-\cfrac{1}{2\pi}\right|\right|_{L^p(\T)}\underset{t\to\infty}{\longrightarrow} 0.$$
\end{proposition}
\begin{proof}
Since $M(t)$ goes to $1/(2\pi)$ exponentially fast and $\mu(t,.)$ is a probability density, formally we get that $\mu(t,.)$ will converge towards $1/(2\pi)$. 
\newline
For $0<\varepsilon\le 1/(2\pi)$ and $t>0$, let $$I(t,\varepsilon)=\left\{y\in\T;\, \mu(t,y)<\cfrac{1}{2\pi}-\varepsilon\right\}.$$
By using that for all $t\ge 0 $, $\mu(t,.)$ is a probability measure, we get: 
\begin{align*}
0&=1-\int_\T \mu(t,\theta)d\theta=\int_\T\left(\cfrac{1}{2\pi}-\mu(t,\theta)\right)d\theta\\
&=\int_{I(t,\varepsilon)}\left(\cfrac{1}{2\pi}-\mu(t,\theta)\right)d\theta+\int_{\T-I(t,\varepsilon)}\left(\cfrac{1}{2\pi}-\mu(t,\theta)\right)d\theta\\
&\ge\varepsilon \leb(I(t,\varepsilon))+\left(\cfrac{1}{2\pi}-M(t)\right)\left(2\pi-\leb(I(t,\varepsilon))\right).
\end{align*}
It yields that: 
\begin{equation} 
\label{lebesguemeasure}
\cfrac{2\pi\left(M(t)-\cfrac{1}{2\pi}\right)}{\varepsilon+M(t)-\cfrac{1}{2\pi}}\ge \leb(I(t,\varepsilon)).
\end{equation}
Thanks to this bound on $\leb(I(t,\varepsilon))$, we can look at the convergence in $L^p$ norms for $p\in[1,+\infty)$. Indeed: 
\begin{align}
\int_\T\left|\cfrac{1}{2\pi}-\mu(t,\theta)\right|^p d\theta=&\int_{I(t,\varepsilon)}\left|\cfrac{1}{2\pi}-\mu(t,\theta)\right|^p d\theta+\int_{\T-I(t,\varepsilon)}\left|\cfrac{1}{2\pi}-\mu(t,\theta)\right|^p d\theta\\
&\le\cfrac{1}{(2\pi)^p}\leb(I(t,\varepsilon))+2\pi\max\left(\varepsilon,M(t)-\cfrac{1}{2\pi}\right)^p
\label{convequi lp}
\end{align}
We take $\varepsilon$ depending on the time also. We choose $\varepsilon(t)^2=M(t)-1/(2\pi)$ which goes to 0 exponentially fast by Proposition \ref{Prop:longtimemax}. This choice of $\varepsilon(t)$ implies that $$\leb(I(t,\varepsilon(t)))\underset{t\to\infty}{\longrightarrow}0$$ by \eqref{lebesguemeasure}. By using \eqref{convequi lp}, we get the result.
\end{proof}

\subsubsection{Convergence of the solution in $L^\infty$ if the initial condition is positive}
\label{subsubsection:convlinfty}
As explained in Section \ref{subsubsection:maxconv} we can obtain the convergence of a solution of \eqref{dyson} if $m(0)>0$ using the Costantin-Vicol lemma to obtain a priori estimates. We present here another approach to obtain this result. We shall directly bound the amplitude (defined as the $M(t)-m(t)$) of a solution of \eqref{dyson} using other classical inequalities satisfied by the operator $A_0$. This approach is
inspired by the work done in \cite{kiselev2022} which can be used for more general equations which involves the operator $A_0$ \cite{cordoba2004maximum,constantin2012nonlinear}.

\begin{proposition}
\label{prop:amplitude}
Let $\mu$ be a smooth solution to $\eqref{dyson}$, then $$\mathcal V(t):=M(t)-m(t)\le (M(0)-m(0))\exp(-\sigma_0 t),$$
where $\sigma_0=4m(0)$.
\end{proposition}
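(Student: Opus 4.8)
The plan is to run a Grönwall argument on $\mathcal{V}(t)=M(t)-m(t)$, the goal being the differential inequality $\mathcal{V}'(t)\le -4m(0)\,\mathcal{V}(t)$ for a.e.\ $t$. First I would record the evolution of the two extrema. Since $\mu$ is smooth, $\|\partial_t\mu(t,\cdot)\|_\infty$ is locally bounded, so $M$ and $m$ are locally Lipschitz in $t$, hence differentiable for a.e.\ $t$; at such a time, picking maximisers $x(t)$ and minimisers $y(t)$ of $\mu(t,\cdot)$ and using $\partial_x\mu(t,x(t))=\partial_x\mu(t,y(t))=0$ in \eqref{dysonextend}, one obtains
$$M'(t)=-M(t)A_0[\mu(t)](x(t)),\qquad m'(t)=-m(t)A_0[\mu(t)](y(t)),$$
exactly as in the proofs of Propositions~\ref{prop: monotonie} and~\ref{prop: estimée infini}.

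Next, using $A_0[\mu(t)](x(t))\ge 0$ and $A_0[\mu(t)](y(t))\le 0$ (the maximum principle for $A_0$) together with $M(t)\ge m(t)\ge m(0)$ (Proposition~\ref{prop: monotonie}), I would estimate
$$\mathcal{V}'(t)=-M(t)A_0[\mu(t)](x(t))+m(t)A_0[\mu(t)](y(t))\le -m(0)\bigl(A_0[\mu(t)](x(t))-A_0[\mu(t)](y(t))\bigr).$$
Writing the difference with the integral form of $A_0$ gives
$$A_0[\mu(t)](x(t))-A_0[\mu(t)](y(t))=\frac12\int_{-\pi}^{\pi}\frac{\mathcal{V}(t)-g_t(z)}{\sin^2(z/2)}\,dz,\qquad g_t(z):=\mu(t,x(t)-z)-\mu(t,y(t)-z),$$
where $|g_t|\le \mathcal{V}(t)$ everywhere and $\int_{-\pi}^{\pi}g_t(z)\,dz=\int_\T\mu(t,\cdot)-\int_\T\mu(t,\cdot)=0$ by periodicity and conservation of mass. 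The integral converges because $g_t$ is $C^2$ near $0$ with $g_t(0)=\mathcal{V}(t)$ a maximum and $g_t'(0)=0$, so the numerator vanishes to second order at $z=0$.

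The crux is then the elementary weighted inequality: for every $V\ge 0$ and measurable $g$ on $(-\pi,\pi)$ with $|g|\le V$ and $\int_{-\pi}^{\pi}g=0$,
$$\int_{-\pi}^{\pi}\frac{V-g(z)}{\sin^2(z/2)}\,dz\ge 8V,$$
the left side being $+\infty$-valued allowed. To prove it I would set $\phi(z)=1/\sin^2(z/2)$, note $\phi\ge 2$ on $\{|z|\le\pi/2\}$, $\phi\le 2$ on $\{\pi/2\le|z|\le\pi\}$, and $\int_{\pi/2\le|z|\le\pi}\phi\,dz=4$, then split
$$\int_{-\pi}^{\pi}(V-g)\phi=\int_{|z|\le\pi/2}(V-g)(\phi-2)+\int_{\pi/2\le|z|\le\pi}(V-g)(\phi-2)+2\int_{-\pi}^{\pi}(V-g).$$
The first term is $\ge 0$ since $V-g\ge 0$ and $\phi-2\ge 0$ there; on $\{\pi/2\le|z|\le\pi\}$ one uses $0\le V-g\le 2V$ and $\phi-2\le 0$ (so $(V-g)(\phi-2)\ge 2V(\phi-2)$, because $(-V-g)(\phi-2)\ge 0$) to get a lower bound $2V(4-2\pi)$; and the last term equals $4\pi V$. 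Summing yields exactly $8V$, a sharp constant. Applying this with $V=\mathcal{V}(t)$, $g=g_t$ gives $A_0[\mu(t)](x(t))-A_0[\mu(t)](y(t))\ge 4\mathcal{V}(t)$, hence $\mathcal{V}'(t)\le -4m(0)\,\mathcal{V}(t)$ a.e., and since $\mathcal{V}$ is locally Lipschitz (so absolutely continuous) and non-negative, Grönwall's lemma gives $\mathcal{V}(t)\le \mathcal{V}(0)e^{-4m(0)t}$.

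The main obstacle is the sharp weighted inequality (the constant $8$, equivalently $\sigma_0=4m(0)$), which requires the exact splitting at the level $\phi=2$ rather than the cruder bound $\phi\ge 1$; the two softer points to state carefully are the a.e.-differentiability/envelope step for $M$ and $m$, and the finiteness of the integral defining the difference of the $A_0$ terms.
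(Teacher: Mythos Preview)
Your argument is correct and reaches the same differential inequality $\mathcal V'(t)\le -4m(0)\mathcal V(t)$ as the paper, but by a genuinely different route. The paper invokes a rearrangement-type lemma (its Lemma~\ref{bound}, after \cite{kiselev2022}) which bounds $A_0 f$ separately at the maximum and at the minimum by $\cotan$ expressions involving the mean $\bar f$; subtracting and using that the two arguments sum to $\pi/2$ together with $\cotan x+\tan x\ge 2$ yields the factor $4$. You instead treat the \emph{difference} $A_0[\mu](x(t))-A_0[\mu](y(t))$ in one stroke, reducing to the elementary weighted inequality $\int_{-\pi}^\pi (V-g)/\sin^2(z/2)\,dz\ge 8V$ for $|g|\le V$, $\int g=0$, proved by splitting at the level $\phi=2$ of $\phi(z)=1/\sin^2(z/2)$ and exploiting $\int_{\pi/2\le|z|\le\pi}\phi=4$. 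Your computation of the constant is clean and sharp (equality would require $g=V$ on $|z|\le\pi/2$ and $g=-V$ elsewhere). The trade-off: your lemma is more direct and self-contained for this proposition, while the paper's lemma gives one-sided information on $A_0 f(x_0)$ alone, which it reuses later to control $M(t)$ without any positivity assumption on $m(0)$ (Proposition~\ref{prop: max conv}). The soft points you flag (a.e.\ differentiability of $M,m$ as Lipschitz envelopes; integrability at $z=0$ via $g_t(0)=\mathcal V(t)$, $g_t'(0)=0$) are handled correctly.
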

\begin{proof}
First we state a lemma based on the properties of the operator $A_0$ used in \cite{kiselev2022}.
\begin{lemma}
\label{bound}
Let $f\in C(\T)$ such that $f(x_0)=M$ is the maximum of $f$ and $f(\tilde x_0)=m$ is the minimum of $f$ and let $$\bar f=\cfrac{1}{2\pi}\int_\T f(x)dx.$$ Then we have the following bounds: 
\begin{align*}
A_0 f(x_0)\ge&2(M-m)\cotan\left(\cfrac{\pi(\bar f-m)}{2(M-m)}\right)\\
A_0 f(\tilde x_0)\le-&2(M-m)\cotan\left(\cfrac{\pi(M-\bar f)}{2(M-m)}\right),
\end{align*}
with strict inequality if $f$ is not constant.
\end{lemma}
\begin{proof}
We just prove the first inequality, the second one can be obtained by changing $f$ in $-f$. Moreover we suppose that $f$ is not constant, otherwise the result is obvious. 
\newline
For $d\in (0,2\pi)$, we introduce the function  $f_d$ defined on $\T$ defined by $f_d(x)=M$ if $|x-x_0|\le d$ and $f_d(x)=m$ otherwise. 
We want to minimize the quantity $A_0 f(x_0)$ on the periodic continuous functions that have the same maximum, minimum and mean value than $f$. Intuitively, since $1/\sin^2(x/2)$ become infinite in 0, we have to minimize the numerator in $A_0$.
\newline
So, we look at: 
$$A_0(f-f_d)(x_0)=\int_{[-d,d]}\cfrac{f-f_d(x_0)-(f-f_d)(x_0+z)}{2\sin^2(z/2)}dz+\int_{\T-[-d,d]}\cfrac{f-f_d(x_0)-(f-f_d)(x_0+z)}{2\sin^2(z/2)}dz.$$
By definition of $f_d$ we have: $$A_0(f-f_d)(x_0)=\int_{[-d,d]}\cfrac{M-f(x_0+z)}{2\sin^2(z/2)}dz+\int_{\T-[-d,d]}\cfrac{m-f(x_0+z)}{2\sin^2(z/2)}dz.$$
By using the monotonicity of $\sin^2(z/2)$ on $[-\pi,\pi]$, we deduce that: $$A_0(f-f_d)(x_0)\ge \cfrac{1}{2\sin^2(d/2)}\left[2d M-\int_{[-d,d]} f(x)dx+2(\pi-d)m-\int_{\T-[-d,d]}f(x)dx\right].$$
Hence it remains to chose the unique $d$ such that the right term is equal to 0. It leads to $$d=\pi\cfrac{\bar f-m}{M-m}.$$
Hence we obtain for this $d$: $$A_0(f)(x_0)\ge A_0(f_d)(x_0)=2(M-m)\cotan\left(\cfrac{\pi(\bar f-m)}{2(M-m)}\right).$$
We observe that since $f$ is not constant the previous inequality is actually strict.
\end{proof}
Now we prove Proposition \ref{prop:amplitude}.
We evaluate the equation $\eqref{dysonextend}$ in $(t,x(t))$ where $M(t)=\mu(t,x(t))$. 
It gives that:
\begin{equation}
\label{majoration}
M'(t)\le-\mu(t,x(t))A_0[\mu(t,.)](x(t))\le -m(0)A_0[\mu(t,.)](x(t)).
\end{equation}
Then we use the estimate of Lemma $\ref{bound}$: $$M'(t)\le-2m(0)(M(t)-m(t))\cotan\left(\cfrac{\pi(\bar \mu(0)-m(t))}{2(M(t)-m(t))}\right).$$
By the same argument we get that: $$m'(t)\ge 2m(0)(M(t)-m(t))\cotan\left(\cfrac{\pi(M(t)-\bar \mu(0))}{2(M(t)-m(t))}\right).$$
So, we have that:
\begin{align*} 
\mathcal V'(t)\le& -2m(0)(M(t)-m(t))\left[\cotan\left(\cfrac{\pi(\bar \mu(0)-m(t))}{2(M(t)-m(t))}\right)+\cotan\left(\cfrac{\pi(M(t)-\bar \mu(0))}{2(M(t)-m(t))}\right)\right]\\
\le&-4m(0)\mathcal V(t).
\end{align*}
because of the fact: $$\cfrac{\pi(M(t)-\bar \mu(0))}{2(M(t)-m(t))}+\cfrac{\pi(\bar \mu(0)-m(t))}{2(M(t)-m(t))}=\cfrac{\pi}{2},$$ and $$\cotan(x)+\cotan(\cfrac{\pi}{2}-x)=\cotan(x)+\cfrac{1}{\cotan(x)}\ge 2.$$
Hence by Gronwall's lemma we get the result.
\end{proof}

We directly obtain from this result the convergence towards the equilibrium measure in the case $m(0)>0$. 
\begin{proposition}
\label{expconv1}
Let $\mu$ be a smooth solution to $\eqref{dysonextend}$, then there exists a finite constant $C_0$ that only depends on $\mu(0,.)$ such that for all time $t\ge 0$: $$ \left|\left|\,\mu(t,.)-\cfrac{1}{2\pi} \,\right|\right|_\infty\le C_0\exp(-\sigma_0 t),$$ with $\sigma_0=4m(0)$. 
\end{proposition}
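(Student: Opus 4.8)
The plan is to deduce this proposition directly from the exponential decay of the amplitude $\mathcal{V}(t) = M(t) - m(t)$ obtained in Proposition \ref{prop:amplitude}, combined with the conservation of mass. First I would record that, since $\mu(t,.)\in\mesT$ for every $t$, the conservation of mass noted just after \eqref{dysonextend} gives $\int_\T \mu(t,x)\,dx = 1$ for all $t\ge 0$; in particular the mean value $\bar\mu(t):=\frac{1}{2\pi}\int_\T\mu(t,x)\,dx=\frac{1}{2\pi}$ is independent of $t$ and equals the density of $\mu_{\mathrm{Unif}}$.

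Then, for a fixed $t\ge 0$ and every $x\in\T$ we have $m(t)\le \mu(t,x)\le M(t)$ by definition of $m$ and $M$, and moreover $m(t)\le \frac{1}{2\pi}\le M(t)$ since the mean of $\mu(t,.)$ lies between its minimum and its maximum. Subtracting these two sandwichings yields the pointwise estimate
\begin{equation*}
\left|\mu(t,x)-\frac{1}{2\pi}\right|\le M(t)-m(t)=\mathcal{V}(t)\qquad\text{for all }x\in\T,
\end{equation*}
so that $\left\|\mu(t,.)-\frac{1}{2\pi}\right\|_\infty\le \mathcal{V}(t)$.

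Finally I would invoke Proposition \ref{prop:amplitude}, which gives $\mathcal{V}(t)\le (M(0)-m(0))\exp(-\sigma_0 t)$ with $\sigma_0=4m(0)$; the statement then follows with the explicit constant $C_0:=M(0)-m(0)=\|\mu(0,.)\|_\infty-\min_{x\in\T}\mu(0,x)$, which is finite because $\mu(0,.)$ is smooth on the compact set $\T$ and which depends only on $\mu(0,.)$. I do not expect any real obstacle here: the only subtlety worth a sentence is the degenerate case $m(0)=0$, in which $\sigma_0=0$ and the bound degenerates to a uniform-in-time estimate; this is still covered, since Proposition \ref{prop: monotonie} gives $\mathcal{V}(t)=M(t)-m(t)\le M(0)-m(0)=C_0$ for every $t\ge 0$.
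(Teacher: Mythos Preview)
Your proof is correct and is exactly the direct deduction the paper has in mind: the paper itself gives no separate argument for Proposition~\ref{expconv1}, merely stating that it is obtained directly from Proposition~\ref{prop:amplitude}, and your use of mass conservation together with the sandwich $m(t)\le \mu(t,x),\,1/(2\pi)\le M(t)$ to bound $\|\mu(t,.)-1/(2\pi)\|_\infty$ by $\mathcal V(t)$ is precisely the intended step. Your remark on the degenerate case $m(0)=0$ is a nice clarification that the paper leaves implicit.
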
 
We can actually improve this theorem by proving that we can find a $\sigma_0$ that does not depend on $\mu(0)$ by bootstrapping the result we get. 
\begin{proposition}
\label{expconv2}
Let $\mu$ be a smooth solution to $\eqref{dyson}$, then there exists a universal constant $\sigma>0$ and a constant $\tilde C_0\in[0,+\infty]$ which is finite if and only if $m(0)>0$ such that for all time $t\ge 0$: $$ \left|\left|\,\mu(t,.)-\cfrac{1}{2\pi} \,\right|\right|_\infty\le\tilde C_0\exp(-\sigma t).$$ 
\end{proposition}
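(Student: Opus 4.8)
The plan is to bootstrap Proposition~\ref{prop:amplitude}: its rate $\sigma_0=4m(0)$ is only spoiled because $m(0)$ may be small, but by Proposition~\ref{prop: monotonie} the function $m$ is non-decreasing, and one checks it must increase up to the mean value $\tfrac1{2\pi}$, at which point the rate becomes essentially $\tfrac2\pi$. Everything rests on an elementary reduction: conservation of mass gives $\int_\T\mu(t,x)\,dx=1$, so the spatial average of $\mu(t,\cdot)$ equals $\tfrac1{2\pi}$, and since $m(t)\le\mu(t,x)\le M(t)$ for every $x$ we get $m(t)\le\tfrac1{2\pi}\le M(t)$, hence
$$\left\|\mu(t,\cdot)-\frac{1}{2\pi}\right\|_\infty\le M(t)-m(t)=\mathcal V(t).$$
By Proposition~\ref{prop: monotonie} the function $\mathcal V$ is non-increasing, and when $m(0)>0$ Proposition~\ref{prop:amplitude} gives $\mathcal V(t)\le\mathcal V(0)e^{-4m(0)t}$, so $\mathcal V(t)\to0$ and therefore $m(t)\to\tfrac1{2\pi}$.

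Assume now $m(0)>0$ and set $\sigma:=\tfrac1\pi$ (any value in $(0,\tfrac2\pi)$ would do). Since $m(t)\to\tfrac1{2\pi}>\tfrac\sigma4$, choose $T_0=T_0(\mu_0)<\infty$ with $m(t)\ge\tfrac\sigma4$ for all $t\ge T_0$. Re-running the proof of Proposition~\ref{prop:amplitude} with initial time $T_0$ in place of $0$ — this is legitimate because that proof only uses $\mu(t,x(t))=M(t)\ge m(t)$ together with Lemma~\ref{bound}, and $m$ non-decreasing makes $m(T_0)$ a valid lower bound on $[T_0,\infty)$ — yields, for $t\ge T_0$,
$$\mathcal V(t)\le\mathcal V(T_0)\exp\!\big(-4m(T_0)(t-T_0)\big)\le\mathcal V(0)\,e^{\sigma T_0}\,e^{-\sigma t}.$$
For $0\le t\le T_0$ we simply use $\|\mu(t,\cdot)-\tfrac1{2\pi}\|_\infty\le\mathcal V(t)\le\mathcal V(0)\le\mathcal V(0)e^{\sigma T_0}e^{-\sigma t}$. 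Taking $\tilde C_0:=\mathcal V(0)e^{\sigma T_0}<\infty$, which depends only on $\mu(0,\cdot)$, proves the estimate with the universal rate $\sigma=\tfrac1\pi$.

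It remains to see that $\tilde C_0$ cannot be finite when $m(0)=0$. Since $\mu(0,\cdot)$ is smooth on $\T$ it attains its minimum at some $x_0$, with $\mu(0,x_0)=0$. Along the characteristic $\dot X(t)=H[\mu(t,\cdot)](X(t))$, $X(0)=x_0$ (globally defined, $H[\mu]$ being smooth and $\T$ compact), equation~\eqref{dysonextend} gives $\tfrac{d}{dt}\mu(t,X(t))=-\mu(t,X(t))\,A_0[\mu(t,\cdot)](X(t))$, a linear ODE, so $\mu(t,X(t))\equiv0$ and $m(t)=0$ for all $t$. Then $\|\mu(t,\cdot)-\tfrac1{2\pi}\|_\infty\ge\tfrac1{2\pi}$ for all $t$, which no $\tilde C_0e^{-\sigma t}$ with $\tilde C_0<\infty$ can dominate; hence $\tilde C_0=+\infty$ is forced, as asserted.

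The only point requiring genuine care is the transfer of the proof of Proposition~\ref{prop:amplitude} to the later starting time $T_0$; one should also note, if the sharpest rate is desired, that iterating this bootstrap lets $\sigma$ be any number strictly below $\tfrac2\pi=4\sup_{t}m(t)$. Everything else is the reduction to $\mathcal V$ and a harmless absorption of the finite window $[0,T_0]$ into the constant.
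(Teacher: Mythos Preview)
Your proof is correct and rests on the same bootstrap idea as the paper: use Proposition~\ref{prop:amplitude} to force $m(t)$ close to $\tfrac1{2\pi}$, then feed this improved lower bound back into the differential inequality for $\mathcal V$. The implementations differ slightly. The paper inserts the time-dependent bound $m(t)\ge\tfrac1{2\pi}-C_0e^{-\sigma_0 t}$ directly into $\mathcal V'(t)\le-4m(t)\mathcal V(t)$ and applies Gronwall once, obtaining the explicit rate $\sigma=\tfrac2\pi$ and constant $\tilde C_0=\mathcal V(0)\exp(4C_0/\sigma_0)$. You instead wait for a time $T_0$ after which $m(t)\ge\tfrac\sigma4$ and restart Proposition~\ref{prop:amplitude} from there, absorbing the window $[0,T_0]$ into the constant. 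Your route is slightly less sharp (you get $\sigma=\tfrac1\pi$ rather than $\tfrac2\pi$, though you note iteration recovers any $\sigma<\tfrac2\pi$) but is arguably cleaner and avoids the explicit Gronwall computation. Your characteristic-curve argument for the ``only if'' direction when $m(0)=0$ is a genuine addition: the paper's proof only establishes the ``if'' direction and leaves the converse implicit.
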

\begin{proof}
To improve our bound we start from $\eqref{majoration}$, and now instead of lower bounding $\mu(t,x(t))$ from by $m(0)$ we can bound it from below by $$\mu(t,x(t))\ge m(t)\ge \cfrac{1}{2\pi}-C_0\exp(-\sigma_0 t), $$ thanks to Proposition $\ref{prop:amplitude}$.
Using this bound and doing the same computations we get: 
\begin{align*} 
\mathcal V'(t)\le& -\left(\cfrac{1}{2\pi}-C_0\exp(-\sigma_0 t)\right)2(M(t)-m(t))\left[\cotan\left(\cfrac{\pi(\bar \mu(0)-m(t))}{2(M(t)-m(t))}\right)+\cotan\left(\cfrac{\pi(M(t)-\bar \mu(0))}{2(M(t)-m(t))}\right)\right]\\
\le&-4\left(\cfrac{1}{2\pi}-C_0\exp(-\sigma_0 t)\right)\mathcal V(t).
\end{align*}
Hence, by Gronwall lemma we have the following bound: 
$$\mathcal V(t)\le \mathcal V(0)\exp\left(-\cfrac{2t}{\pi}+4\cfrac{C_0}{\sigma_0}(1-\exp(-\sigma_0t)\right)\le \mathcal V(0)\exp\left(4\cfrac{C_0}{\sigma_0}\right)\exp\left(-\cfrac{2t}{\pi}\right).$$
This bound concludes the proof.
\end{proof}

To summarize the results about the convergence towards equilibrium obtained in this section and the previous one: we proved that if there exists a time $t^*\ge 0$ such that $m(t^*)>0$ then a solution to the Dyson equation $\mu(t,.)$ converges in $L^\infty(\T)$ to $1/(2\pi)$ exponentially fast and if for all time $t\ge 0$, $m(t)=0$ then we have at least that $\mu(t,.)$ converges to $1/(2\pi)$ exponentially fast in $L^p(\T)$ for all $p\in [1,+\infty)$. 
\newline
We expect that $\mu(t,.)$ always converges in $L^\infty(\T)$ to $1/(2\pi)$ (and so we are always in the first previous case actually). We did not succeed to prove this result. We think that we lack information to obtain this result. If we could get a similar bound as obtain Proposition \ref{prop: estimée infini} but for $C^{\alpha}(\T)$ with $\alpha>0$ instead of $L^\infty(\T)$ we would have the result. 
\subsubsection{Regularity inside the support}
To conclude this section, let us mention how to obtain bounds about the derivative of a solution to the Dyson equation that can be used to prove regularity of a solution of the Dyson equation inside its support. Indeed, as in \cite{kiselev2022} we shall prove the exponentially fast convergence to 0 of the uniform norm of the derivative of a solution to the Dyson equation when $t$ goes to $\infty$ if $m(0)>0$. We give the proof of this result to point out some bounds on the Hilbert transform and the half Laplacian that can be used in the study of equations of this type even though these lemmas are known in the literature \cite{constantin2012nonlinear,kiselev2022}.
\begin{proposition}
\label{expconvderivative}
Let $\mu$ be a smooth solution to $\eqref{dyson}$ such that $m(0)>0$. Then $$||\partial_x \mu(t,.)||_\infty\le C_1\exp(-\sigma_1 t),$$ where $\sigma_1=m(0)$ and $C_1$ is a finite constant that depends on $\mu(0,.)$. 
\end{proposition}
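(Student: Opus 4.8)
The plan is to differentiate the Dyson equation \eqref{dysonextend} in the space variable. Setting $w:=\partial_x\mu$ and using that $\partial_x$ commutes with both $H$ and $A_0$, together with the identity $A_0[\mu]=H[\partial_x\mu]=H[w]$, one obtains
\begin{equation*}
\partial_t w+H[\mu]\,\partial_x w+2\,w\,A_0[\mu]+\mu\,A_0[w]=0 .
\end{equation*}
Here $\mu\,A_0[w]$ plays the role of a dissipation: by Proposition~\ref{prop: monotonie} and the hypothesis $m(0)>0$ one has $\mu(t,\cdot)\ge m(0)>0$ for all $t$, and $A_0$ obeys a maximum principle. The transport term $H[\mu]\partial_x w$ vanishes at spatial extrema of $w$, and $2wA_0[\mu]$ is the genuinely delicate lower order contribution.

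First I would run a maximum-principle argument, most conveniently on the amplitude $\mathcal W(t):=\max_x w(t,\cdot)-\min_x w(t,\cdot)$, which controls $\|\partial_x\mu(t,\cdot)\|_\infty$ since $\int_\T w=0$. Differentiating $\max_x w$ and $\min_x w$ and evaluating the equation for $w$ at a maximum point $x_0(t)$ and a minimum point $\tilde x_0(t)$, the transport terms drop out, $A_0[w](t,x_0)\ge 0\ge A_0[w](t,\tilde x_0)$, and using $\mu\ge m(0)$ one gets
\begin{equation*}
\mathcal W'(t)\le -m(0)\big(A_0[w](t,x_0)-A_0[w](t,\tilde x_0)\big)-2\big(M_w\,A_0[\mu](t,x_0)-m_w\,A_0[\mu](t,\tilde x_0)\big),
\end{equation*}
with $M_w=\max_x w$, $m_w=\min_x w$. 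Applying Lemma~\ref{bound} to $w$ (whose mean is zero), exactly as in the proof of Proposition~\ref{prop:amplitude}, gives $A_0[w](t,x_0)-A_0[w](t,\tilde x_0)\ge 4\,\mathcal W(t)$, which produces the favourable term $-4m(0)\mathcal W(t)$; since $m_w\le 0\le M_w$, so that $M_w-m_w=\mathcal W$, it then remains only to absorb the residual $A_0[\mu]$ terms, whose total is at most $2\mathcal W(t)\,\|A_0[\mu](t,\cdot)\|_\infty$ in absolute value.

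The main obstacle is precisely the estimation of $A_0[\mu]=H[w]$ at an extremum of $w$: both $A_0$ and $H$ are of order one, while only $\|w\|_\infty$ (equivalently $\mathcal W$) is controlled a priori. This is where the pointwise bounds for the Hilbert transform and the half Laplacian of C\'ordoba--C\'ordoba / Constantin--Vicol type enter, i.e. the estimates advertised in the sentence preceding the statement (see \cite{constantin2012nonlinear,kiselev2022}). Concretely: (i) at a maximum point $x_0$ of $w$ one has the quantitative lower bound $A_0[w](t,x_0)\ge 4\|w(t,\cdot)\|_\infty$ (again from Lemma~\ref{bound}); (ii) splitting the principal value defining $H[w](x_0)$ at a radius $r\in(0,\pi]$ and using Cauchy--Schwarz one gets, for a universal constant $C$,
\begin{equation*}
|H[w](t,x_0)|\le C\Big(\sqrt{r\,\|w(t,\cdot)\|_\infty\,A_0[w](t,x_0)}\;+\;\|w(t,\cdot)\|_\infty\,\big|\ln r\big|\Big).
\end{equation*}
Inserting this into the inequality for $\mathcal W'(t)$ (or directly for $\tfrac{d}{dt}\|w(t,\cdot)\|_\infty$), using Young's inequality to transfer a small multiple of $A_0[w](t,x_0)$ onto the dissipative term $m(0)A_0[w](t,x_0)$, and optimising in $r$, one reaches a closed differential inequality of the form
\begin{equation*}
\tfrac{d}{dt}\|w(t,\cdot)\|_\infty\le -2m(0)\,\|w(t,\cdot)\|_\infty+\widetilde C\,\|w(t,\cdot)\|_\infty^{\,2}\,\ln\!\big(2+\|w(t,\cdot)\|_\infty\big),
\end{equation*}
with $\widetilde C$ depending only on $m(0)$.

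To conclude I would invoke the already established exponential $L^\infty$ convergence $\|\mu(t,\cdot)-\tfrac1{2\pi}\|_\infty\le C_0 e^{-\sigma_0 t}$ of Proposition~\ref{expconv1} (valid here because $m(0)>0$); combined with a uniform-in-time higher order a priori estimate, obtained by applying the same dissipative mechanism to $\partial_x^k\mu$, and with interpolation, this forces $\|\partial_x\mu(t,\cdot)\|_\infty\to 0$, so that for large $t$ the quadratic correction above is subcritical and Gronwall's lemma yields $\|\partial_x\mu(t,\cdot)\|_\infty\le C_1 e^{-\sigma_1 t}$ with $\sigma_1=m(0)$ (the argument in fact tolerates a somewhat larger rate, which one then relaxes to $m(0)$). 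I expect the delicate points to be the bookkeeping in the nonlinear maximum-principle estimates, so as to keep constants explicit enough that the limiting rate is at least $m(0)$, and the interpolation/uniform-regularity step used to guarantee that $\|\partial_x\mu(t,\cdot)\|_\infty$ eventually becomes small; everything else is a direct adaptation of Propositions~\ref{prop: estimée infini}, \ref{prop:amplitude} and \ref{expconv1}.
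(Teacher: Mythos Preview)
Your outline has the right starting point (differentiate, use the maximum principle for $A_0$, invoke nonlinear/C\'ordoba--C\'ordoba type bounds), but it misses the structural feature that makes the paper's argument close without any interpolation or higher-order bootstrap. The key is that the two pointwise lemmas used (the ``enhanced maximum principle'' $4\mathcal V\,A_0[g](x_0)\ge g(x_0)^2$ and $A_0[g](x_0)\ge(g(x_0)-\mathcal V)_+$, and the Hilbert bound $H[g](x_0)^2\le 32\,\mathcal V\,A_0[g](x_0)$) are stated for a pair $(f,g)$ with $g=f'$, and the constant $\mathcal V$ appearing in them is the amplitude of the \emph{antiderivative} $f$, not of $g$. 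Applied with $f=\mu$ and $g=\nu=\partial_x\mu$, this means the competing coefficient is $\mathcal V(t)=M(t)-m(t)$, which is already known to decay exponentially by Proposition~\ref{prop:amplitude}. One then bounds
\[
-2\nu H[\nu]\le \nu^2+H[\nu]^2\le 4\mathcal V\,A_0[\nu]+32\mathcal V\,A_0[\nu]=36\,\mathcal V\,A_0[\nu],
\]
so that $M_1'(t)\le(-m(0)+36\,\mathcal V(t))\,A_0[\nu](x(t))$; for $t$ large the bracket is negative, and the second lemma gives $A_0[\nu](x(t))\ge M_1(t)-\mathcal V(t)$, yielding a linear Gronwall inequality with rate $m(0)$ and a forcing term that decays like $\mathcal V(t)$.

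By contrast, your estimate $|H[w](x_0)|\le C\big(\sqrt{r\|w\|_\infty A_0[w](x_0)}+\|w\|_\infty|\ln r|\big)$ keeps everything in terms of $\|w\|_\infty$ itself, and after Young's inequality you land on $\tfrac{d}{dt}\|w\|_\infty\le -2m(0)\|w\|_\infty+\widetilde C\|w\|_\infty^2\ln(2+\|w\|_\infty)$, which does not even guarantee boundedness for large initial data. Your proposed rescue (uniform-in-time bounds on $\partial_x^k\mu$ plus interpolation) is circular: controlling $\partial_x^2\mu$ by the same mechanism produces lower-order terms in $\|\partial_x\mu\|_\infty$, which is precisely the quantity you have not yet bounded. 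The fix is not more regularity but the right pointwise inequality: integrate by parts in the definitions of $A_0[\nu](x_0)$ and $H[\nu](x_0)$ to replace $\nu(x_0+y)$ by $\mu(x_0+y)-\mu(x_0)$, so that the oscillation of $\mu$, not of $\nu$, appears in the constants.
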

\begin{proof}
First, we state two lemmas about the operator $A_0$ and the Hilbert transform used in \cite{constantin2012nonlinear,kiselev2022}.
\begin{lemma}[Enhanced maximum principle]
\label{lemma: max principle der} Let $f\in C^1(\T)$ be a function with amplitude $\mathcal V=\max_\T f-\min_\T f$ and $g=f'$. Let $x_0$ be a point where $\max_\T g$ is attained. Then we have: 
\begin{equation}
\begin{split}
4 \mathcal V A_0[g](x_0)\ge& g(x_0)^2\\
A_0[g](x_0)\ge&\max(g(x_0)-\mathcal V,0)
\end{split}
\end{equation}
\end{lemma}

\begin{proof}
If $\mathcal V=0$, then $f$ is constant and $g=0$. The two results are obvious.
So we now deal with the case $\mathcal V>0$ which implies $g(x_0)>0$ (indeed otherwise $g\le 0$ and so $u$ would be a non increasing periodic function which is only possible if $f$ is constant).
\newline
We integrate by part to link $A_0[g]$ with $f$. To do this properly, we introduce a smooth increasing function defined in $\R^+$ such that $\chi(r)=0$ if $r<1$ and $\chi(r)=1$ if $r>2$ and $\chi'(y)\le 2$. We also define $\chi_R(x)=\chi(x/R)$. Now, we can lower bound properly $A_0[g]$. 
\begin{align*}
 \cfrac{A_0[g](x_0)}{2}\ge& \int_\R\chi_R(|y|)\,\cfrac{g(x_0)-g(x_0+y)}{y^2}dy\\
\ge& g(x_0)\int_{|y|>2R}\cfrac{1}{y^2} \,dy-\int_{|y|>R}\chi_R(|y|)\,\cfrac{g(x_0+y)}{y^2}dy\\
\ge& \cfrac{g(x_0)}{R}-\int_{|y|>R}\chi_R(|y|)\,\cfrac{\partial_y(f(x_0+y)-f(x_0))}{y^2}dy\\
=&\cfrac{g(x_0)}{R}+\int_{|y|>R}(f(x_0+y)-f(x_0))\,\partial_y\left(\cfrac{\chi_R(|y|)}{y^2}\right)dy\\
=&\cfrac{g(x_0)}{R}+\int_{y>R}(f(x_0+y)-f(x_0-y))\,\partial_y\left(\cfrac{\chi_R(y)}{y^2}\right)dy\\
\ge&\cfrac{g(x_0)}{R}-\cfrac{2\mathcal V}{R^2}, 
\end{align*}
because of the fact that: 
\begin{align*}
\int_{y>R}(f(x_0+y)-f(x_0-y))\,\partial_y\left(\cfrac{\chi_R(y)}{y^2}\right)dy&\ge -\mathcal V\int_{y>R},\cfrac{\chi'_R(y)}{Ry^2}+\cfrac{2\chi_R(y)}{y^3}\,dy\\
&\ge-\mathcal V\left(\int_R^{2R}\cfrac{2}{Ry^2}\,dy+\int_R^{+\infty}\cfrac{2}{y^3} \right)\,dy=-\cfrac{2\mathcal V}{R^2}.
\end{align*}
We conclude by choosing $R=4\mathcal V/g(x_0)$ (obtained by optimizing in $R$ the previous bound) and $R=2$.

\end{proof}

\begin{lemma}
\label{lemma: bound Hilbert trans}
With same assumptions of Lemma $\ref{lemma: max principle der}$ we have: $$H[g](x_0)^2\le 32 \,\mathcal V A_0[g](x_0).$$
\end{lemma}
\begin{proof}
We cut $H[g](x_0)$ in two parts to do an integration by part far from the singularity. 
$$H[g](x_0)=2\int_{-R}^R\cfrac{g(x_0)-g(x_0+y)}{y}\, dy+2\int_{R}^{+\infty}\cfrac{g(x_0+y)-g(x_0-y)}{y}\, dy.$$The first term can be compared with the half Laplacian directly: $$2\int_{-R}^R\cfrac{g(x_0)-g(x_0+y)}{|y|}=2\int_{-R}^R\cfrac{g(x_0)-g(x_0+y)}{y^2}|y|\,dy\le R A_0[g](x_0).$$
For the second, integrating by part yields: 
$$\left|\left[\cfrac{2f(x_0)-f(x_0+y)-f(x_0+y)}{y}\right]_R^{+\infty}-\int_R^{\infty}\cfrac{2f(x_0)-f(x_0+y)-f(x_0+y)}{y^2}\, dy\right|\le \cfrac{2\mathcal V}{R}+\cfrac{2\mathcal V}{R}=\cfrac{4\mathcal V}{R}.$$
So we get that: $$|H[g](x_0)|\le R A_0[v](x_0)+\cfrac{8\mathcal V}{R}.$$
We finally choose the optimal $R$ to have the smallest bound which leads to the result.
\end{proof}
Thanks to these lemmas we can now prove Proposition \ref{expconvderivative}.
\newline
We look at the equation satisfies by $\nu:=\partial_x \mu$ by differentiating $\eqref{dysonextend}$. 
$\nu$ is solution to: $$\partial_t \nu +\partial_x \nu H[\mu]+\nu H[\nu]+\nu A_0[\mu]+\mu A_0[\nu]=0.$$
By using that: $A_0[u]=\partial_x H[u]=H[\partial_x u]=H[\nu]$ we have that $\nu$ satisfies the following equation: 
\begin{equation}
\label{dysonderivée}
\partial_t \nu +\partial_x \nu H[\mu]+2\nu H[\nu]+\mu A_0[\nu]=0.
\end{equation}
We introduce $M_1(t)=\max_\T \partial_x\mu(t,.)$. By compactness and regularity of $\mu$ we can find $x(t)$ such that $M_1(t)=\partial_x\mu(t,x(t))$. We evaluate $\eqref{dysonderivée}$ in $(t,x(t))$ to obtain that $$M_1'(t)\le -2\nu(t,x(t))H[\nu(t,.)](x(t))-\mu(t,x(t))A_0[\nu(t,.)](x(t)).$$
Then, it yields that: 
\begin{align*}
-2\nu(t,x(t))H[\nu(t,.)](x(t))&\le \nu(t,x(t))^2+(H[\nu(t,.)](x(t)))^2\\
&\le 4 \mathcal V(t)A_0[\nu(t,.)](x(t))+32 \,\mathcal V(t)A_0[\nu(t,.)](x(t)),
\end{align*}
by Lemma \ref{lemma: max principle der} and Lemma \ref{lemma: bound Hilbert trans}.
Hence, we deduce that: $$M_1'(t)\le\left(-m(0)+36 \mathcal V(t)\right)A_0[\nu(t,.)](x(t)).$$
Using Lemma \ref{lemma: max principle der} and the fact that $\mathcal V(t)$ converges exponentially fast to $0$ by Theorem \ref{expconv1}, we can say that for $t$ large enough: 
\begin{align*}
M_1'(t)&\le\left(-m(0)+36 \mathcal V(t)\right)\max(M_1(t)- \mathcal V(t),0)\\
&\le\left(-m(0)+36 \mathcal V(t)\right)(M_1(t)- \mathcal V(t)).
\end{align*}
Let $K_1:=\max(36\mathcal V(0),m(0)\mathcal V(0))$ and $\sigma_1=m(0)<\sigma_0$. Thanks to Proposition $\ref{prop:amplitude}$ we get:  
$$ M_1'(t)\le (-\sigma_1+K_1\exp(-\sigma_0 t))M_1(t)+K_1\exp(-\sigma_0 t).$$
By Gronwall's lemma we conclude that there exists $C_1\ge 0$ such that $$M_1(t)\le C_1\exp(-\sigma_1 t).$$
We can do the same proof to lower bound $\min_\T\partial_x\mu(t,.)$ and obtain the result.
\end{proof}

\subsection*{Acknowledgments} The first author acknowledges partial support from the chair FDD (Institut Louis Bachelier) and the Lagrange Mathematics
and Computing Research Center. The two authors are thankful to Pierre-Louis Lions (Collège de France) for pointing out the problem.

\renewcommand{\MR}[1]{}
\bibliographystyle{smfplain}
\bibliography{ref}

\providecommand{\bysame}{\leavevmode ---\ }
\providecommand{\og}{``}
\providecommand{\fg}{''}
\providecommand{\smfandname}{\&}
\providecommand{\smfedsname}{\'eds.}
\providecommand{\smfedname}{\'ed.}
\providecommand{\smfmastersthesisname}{M\'emoire}
\providecommand{\smfphdthesisname}{Th\`ese}
\begin{thebibliography}{10}

\bibitem{anderson2010}
{\scshape G.~W. Anderson, A.~Guionnet {\normalfont \smfandname} O.~Zeitouni} --
  \emph{An introduction to random matrices}, no. 118, Cambridge university
  press, 2010.

\bibitem{arisawa2006}
{\scshape M.~Arisawa} -- {\og A new definition of viscosity solutions for a
  class of second-order degenerate elliptic integro-differential equations\fg},
  in \emph{Annales de l'Institut Henri Poincar{\'e} C, Analyse non
  lin{\'e}aire}, vol.~23, Elsevier, 2006, p.~695--711.

\bibitem{arisawa2008}
\bysame , {\og A remark on the definitions of viscosity solutions for the
  integro-differential equations with l{\'e}vy operators\fg}, \emph{Journal de
  math{\'e}matiques pures et appliqu{\'e}es} \textbf{89} (2008), no.~6,
  p.~567--574.

\bibitem{arous1997large}
{\scshape G.~B. Arous {\normalfont \smfandname} A.~Guionnet} -- {\og Large
  deviations for wigner's law and voiculescu's non-commutative entropy\fg},
  \emph{Probability theory and related fields} \textbf{108} (1997),
  p.~517--542.

\bibitem{barles2008}
{\scshape G.~Barles {\normalfont \smfandname} C.~Imbert} -- {\og Second-order
  elliptic integro-differential equations: viscosity solutions' theory
  revisited\fg}, in \emph{Annales de l'IHP Analyse non lin{\'e}aire}, vol.~25,
  2008, p.~567--585.

\bibitem{bertucci2022spectral}
{\scshape C.~Bertucci, M.~Debbah, J.-M. Lasry {\normalfont \smfandname} P.-L.
  Lions} -- {\og A spectral dominance approach to large random matrices\fg},
  \emph{Journal de Math{\'e}matiques Pures et Appliqu{\'e}es} \textbf{164}
  (2022), p.~27--56.

\bibitem{bertucci2024spectral}
{\scshape C.~Bertucci, J.-M. Lasry {\normalfont \smfandname} P.-L. Lions} --
  {\og A spectral dominance approach to large random matrices: part ii\fg},
  \emph{Journal de Math{\'e}matiques Pures et Appliqu{\'e}es} \textbf{192}
  (2024), p.~103630.

\bibitem{biler2008nonlinear}
{\scshape P.~Biler, G.~Karch {\normalfont \smfandname} R.~Monneau} -- {\og
  Nonlinear diffusion of dislocation density and self-similar solutions\fg},
  \emph{arXiv preprint arXiv:0812.4979} (2008).

\bibitem{bolley2018}
{\scshape F.~Bolley, D.~Chafa{\"\i} {\normalfont \smfandname} J.~Fontbona} --
  {\og Dynamics of a planar coulomb gas\fg},  (2018).

\bibitem{buijsman2024}
{\scshape W.~Buijsman} -- {\og Efficient circular dyson brownian motion
  algorithm\fg}, \emph{Physical Review Research} \textbf{6} (2024), no.~2,
  p.~023264.

\bibitem{cepa2001brownian}
{\scshape E.~C{\'e}pa {\normalfont \smfandname} D.~L{\'e}pingle} -- {\og
  Brownian particles with electrostatic repulsion on the circle: Dyson's model
  for unitary random matrices revisited\fg}, \emph{ESAIM: Probability and
  Statistics} \textbf{5} (2001), p.~203--224.

\bibitem{cepa2007}
\bysame , {\og No multiple collisions for mutually repelling brownian
  particles\fg}, in \emph{S{\'e}minaire de Probabilit{\'e}s XL}, Springer,
  2007, p.~241--246.

\bibitem{chafai2015}
{\scshape D.~Chafa{\"\i}} -- {\og From boltzmann to random matrices and
  beyond\fg}, in \emph{Annales de la Facult{\'e} des sciences de Toulouse:
  Math{\'e}matiques}, vol.~24, 2015, p.~641--689.

\bibitem{chafai2021aspects}
\bysame , {\og Aspects of coulomb gases\fg}, \emph{arXiv preprint
  arXiv:2108.10653} (2021).

\bibitem{constantin2012nonlinear}
{\scshape P.~Constantin {\normalfont \smfandname} V.~Vicol} -- {\og Nonlinear
  maximum principles for dissipative linear nonlocal operators and
  applications\fg}, \emph{Geometric And Functional Analysis} \textbf{22}
  (2012), p.~1289--1321.

\bibitem{cordoba2004maximum}
{\scshape A.~C{\'o}rdoba {\normalfont \smfandname} D.~C{\'o}rdoba} -- {\og A
  maximum principle applied to quasi-geostrophic equations\fg},
  \emph{Communications in mathematical physics} \textbf{249} (2004),
  p.~511--528.

\bibitem{de2025clogged}
{\scshape A.~C. de~Courcel} -- {\og On clogged and fast diffusions in porous
  media with fractional pressure\fg}, \emph{arXiv preprint arXiv:2501.11645}
  (2025).

\bibitem{crandall1992}
{\scshape M.~G. Crandall, H.~Ishii {\normalfont \smfandname} P.-L. Lions} --
  {\og User’s guide to viscosity solutions of second order partial
  differential equations\fg}, \emph{Bulletin of the American mathematical
  society} \textbf{27} (1992), no.~1, p.~1--67.

\bibitem{daoud2022fractional}
{\scshape M.~Daoud {\normalfont \smfandname} E.~H. Laamri} -- {\og Fractional
  laplacians: A short survey\fg}, \emph{Discrete \& Continuous Dynamical
  Systems-S} \textbf{15} (2022), no.~1, p.~95--116.

\bibitem{dembo2009large}
{\scshape A.~Dembo} -- \emph{Large deviations techniques and applications},
  Springer, 2009.

\bibitem{do2018}
{\scshape T.~Do, A.~Kiselev, L.~Ryzhik {\normalfont \smfandname} C.~Tan} --
  {\og Global regularity for the fractional euler alignment system\fg},
  \emph{Archive for Rational Mechanics and Analysis} \textbf{228} (2018),
  p.~1--37.

\bibitem{dyson1962}
{\scshape F.~J. Dyson} -- {\og A brownian-motion model for the eigenvalues of a
  random matrix\fg}, \emph{Journal of Mathematical Physics} \textbf{3} (1962),
  no.~6, p.~1191--1198.

\bibitem{hiai2000large}
{\scshape F.~Hiai {\normalfont \smfandname} D.~Petz} -- {\og A large deviation
  theorem for the empirical eigenvalue distribution of random unitary
  matrices\fg}, in \emph{Annales de l'Institut Henri Poincare (B) Probability
  and Statistics}, vol.~36, Elsevier, 2000, p.~71--85.

\bibitem{kiselev2022}
{\scshape A.~Kiselev {\normalfont \smfandname} C.~Tan} -- {\og The flow of
  polynomial roots under differentiation\fg}, \emph{Annals of PDE} \textbf{8}
  (2022), no.~2, p.~16.

\bibitem{meckes2019random}
{\scshape E.~S. Meckes} -- \emph{The random matrix theory of the classical
  compact groups}, vol. 218, Cambridge University Press, 2019.

\bibitem{serfaty2024lectures}
{\scshape S.~Serfaty} -- {\og Lectures on coulomb and riesz gases\fg},
  \emph{arXiv preprint arXiv:2407.21194} (2024).

\bibitem{sniady2002random}
{\scshape P.~{\'S}niady} -- {\og Random regularization of brown spectral
  measure\fg}, \emph{Journal of Functional Analysis} \textbf{193} (2002),
  no.~2, p.~291--313.

\bibitem{tao2012}
{\scshape T.~Tao} -- \emph{Topics in random matrix theory}, vol. 132, American
  Mathematical Soc., 2012.

\bibitem{voiculescu1993analogues}
{\scshape D.~Voiculescu} -- {\og The analogues of entropy and of fisher's
  information measure in free probability theory, i\fg}, \emph{Communications
  in mathematical physics} \textbf{155} (1993), no.~1, p.~71--92.

\end{thebibliography}

\end{document}